\newtheorem{definition}{Definition}
\newtheorem{lemma}{Lemma}
\newtheorem{proposition}[lemma]{Proposition}
\newtheorem{example}{Example}
\newtheorem{thm}{Theorem}
\newtheorem{corollary}{Corollary}
\newtheorem{remark}{Remark}
\newcommand{\bZ}{\mathbb{Z}}
\newcommand{\bC}{\mathbb{C}}
\newcommand{\cm}{\mathfrak{C}_{m,n}}
\newcommand{\am}{\mathbb{A}_{m,n}}
\newcommand{\bm}{\mathbb{B}_{m,n}}
\newcommand{\sn}{\widetilde{S}_n}
\newcommand{\h}{{\bf h}}
\newcommand{\bh}{{\bf \overline{h}}}
\newcommand{\x}{{\bf x}}
\newcommand{\s}{{\bf s}^{(k)}}
\newcommand{\bs}{{\bf s}^{(m,n)}}
\newcommand{\anmz}{A_{(n-m,m)}^0}
\newcommand{\anm}{A_{(n-m,m)}}
\newcommand{\znz}{\bZ/n\bZ}
\DeclareMathOperator{\maxc}{max_c}
\DeclareMathOperator{\maxr}{max_r}
\begin{document}

\title{Positivity of Cylindric skew Schur functions}
\author{Seung Jin Lee}

\address{Department of Mathematical Sciences, Seoul National University, GwanAkRo 1, Gwanak-Gu, Seoul 08826, Korea}      
\email{lsjin@snu.ac.kr}

\maketitle

\begin{abstract}
Cylindric skew Schur functions, a generalization of skew Schur functions, are closely related to the famous problem finding a combinatorial formula for the 3-point Gromov-Witten invariants of Grassmannian. In this paper, we prove cylindric Schur positivity of the cylindric skew Schur functions, conjectured by McNamara. We also show that all coefficients appearing in the expansion are the same as $3$-point Gromov-Witten invariants. We start discussing properties of affine Stanley symmetric functions for general affine permutations and $321$-avoiding affine permutations, and explain how these functions are related to cylindric skew Schur functions. We also  provide an effective algorithm to compute the expansion of the cylindric skew Schur functions in terms of the cylindric Schur functions, and the expansion of affine Stanley symmetric functions in terms of affine Schur functions.
\end{abstract}

\section{introduction}

A cylindric Schur function, a generalization of Schur functions, is a generating function for semistandard Young tableaux on a cylindric shape. It is well-known that Schur functions play an important role on other various subjects such as representation theory and Schubert calculus. For example, the cohomology ring of the Grassmannian can be described in terms of Schur functions and related combinatorics.\\

Postnikov \cite{postnikov.2005} observed that combinatorics on cylindric (skew) Schur functions can describe a quantum cohomology of the Grassmannian $Gr(m,n)$. Postnikov showed that coefficients of a Schur expansion of the cylindric skew Schur polynomials in the first $m$ variables are the same as the multiplicative structure constants of the quantum cohomology of the Grassmannian, called ($3$-point) Gromov-Witten invariants. \\

When there is no restriction on the number of variables, the cylindric skew Schur functions are not often Schur-positive. In other words, most cylindric skew Schur functions cannot be written as a non-negative linear combination of Schur functions. However, McNamara \cite{mcnamara.2006} conjectured that a cylindric skew Schur function is cylindric Schur-positive: it can be written as a non-negative linear combination of cylindric Schur functions. Moreover, these coefficients in the linear combination contain all Gromov-Witten invariants. In this paper, we prove this conjecture and even stronger statement conjectured by McNamara. (See Theorem \ref{thm.main}.)\\

For proving the theorem, we first use the result proved by Lam \cite{lam.2006} that the cylindric skew Schur functions are special cases of the affine Stanley symmetric functions indexed by $321$-avoiding affine permutations. Then we study the affine nilCoxeter algebra to derive interesting identities and symmetries related to the affine Stanley symmetric functions for $321$-avoiding affine permutations.
The cylindric Schur positivity of cylindric skew Schur functions follows from the fact that the affine Stanley symmetric functions can be written as a non-negative linear combination of affine Schur functions and combinatorics of $321$-avoiding affine permutations. Note that finding a combinatorial formula for the coefficients of the expansion of the affine Stanley symmetric functions in terms of affine Schur functions is wide open, and these coefficients contain Littlewood-Richardson coefficients for the flag variety and the Gromov-Witten invariants for the flag variety \cite{peterson.1997} which are famous problems in combinatorics. Although the cylindric Schur positivity of cylindric skew Schur functions is proved in this paper, we do not have a manifestly positive formula for the coefficients in the expansion. Instead, we introduce an effective algorithm to compute these coefficients in Section 5, which also can be used to compute the expansion of affine Stanley symmetric functions in terms of affine Schur functions. 
 \\
 
The structure of the paper is as follows. In Section 2, we introduce the cylindric skew Schur functions and related combinatorics. In Section 3, we review known results about affine Stanley symmetric functions and affine nilCoxeter algebras. In Section 4, we study $321$-avoiding permutations and relate properties of affine Stanley symmetric functions to derive Theorem \ref{thm.main}. In Section 5, we present an effective algorithm to compute the expansion of cylindric skew Schur functions (resp. affine Stanley symmetric functions) in terms of cylindric Schur functions (resp. affine Schur functions). 

\section{Cylindric skew Schur functions}
A \emph{partition} $\lambda=(\lambda_1,\cdots,\lambda_\ell)$ is a weakly decreasing sequence of positive integers. Let $\ell(\lambda)$ be the integer $\ell$, the number of parts in $\lambda$. For positive integers $m<n$, let $P_{mn}$ be a set of partition such that $\lambda_1\leq n-m$ and $\ell(\lambda)\leq m$. The integers $m<n$ are fixed throughout the paper.\\

Let $\cm$ be the set $\bZ^2/ (-n+m, m)\bZ$ and let $\langle i,j\rangle$ be $(i,j)+ (-n+m,m)\bZ$ in $\cm$. $\cm$ inherits a natural partial ordering $<_{\mathfrak{C}}$ from $\bZ^2$, generated by covering relations $\langle i,j\rangle <_{\mathfrak{C}}\langle i+1,j\rangle$ and $\langle i,j\rangle<_{\mathfrak{C}} \langle i,j+1\rangle$. A cylindric diagram $D$ is a finite subset of $\cm$ such that for $a,b \in D$, we have $[a,b]_{\mathfrak{C}} \subset D$. For an integer $r$, $r$-th row of a cylindric diagram $D$ is a set of elements $ \langle r , j \rangle$ in $D$, and $r$-th column of $D$ is a set of elements $ \langle i , r\rangle$ in $D$. Note that $r$-th row of $D$ only depends on $r$ modulo $n-m$ and $r$-th column of $D$ only depends on $r$ modulo $m$. The $r$-th diagonal is a set of elements $\langle i, j \rangle$ satisfying $j-i=r$ modulo $n$.

We say that a bi-infinite integer sequence $\alpha=( \ldots, \alpha_{-1},\alpha_0,\alpha_1,\ldots)$ is $(m,n)$-periodic if $\alpha_i=\alpha_{i+m}-(n-m)$ for any $i \in \bZ$, and increasing if $\alpha_i\leq \alpha_j$ for any $i<j$. For a partition $\lambda \in P_{mn}$ and an integer $r$, let $\lambda[r]$ be the $(m,n)$-periodic sequence defined by $\lambda[r]_{i+r}=\lambda_i+r$ for $i=1,\ldots,m$. All weakly decreasing $(m,n)$-periodic sequences are of the form $\lambda[r]$ for some $r\in \bZ$ and $\lambda\in P_{mn}$. \\

Recall that a subset of a partially ordered set is called an ordered ideal if whenever it contains an element $x$, it also contains all elements less than $x$. There is a one-to-one correspondence between a set $\{ \lambda[r] \mid r \in \bZ, \lambda \in P_{mn}\}$ and the set of an ordered ideal $D_{\lambda[r]} = \{ \langle i,j \rangle \in \cm \mid (i,j) \in \bZ^2, j \leq \lambda[r]_i \}$. For $\lambda,\mu \in P_{mn}$ and $r,s\in \bZ$, we call the pair $(\lambda[r],\mu[s])$ a \emph{cylindric shape} of type $(m,n)$ if $\mu[s]_i\leq \lambda[r]_i$ for all $i$ and denote it by $\lambda[r]/\mu[s]$. The inequality condition is equivalent to the inclusion $D_{\mu[s]} \subset D_{\lambda[r]}$. We denote the set of all cylindric shapes of type $(m,n)$ by $C_{mn}$.
Each cylindric diagram in $\cm$ can be written as the difference of two ordered ideals,
$$D_{\lambda[r]/\mu[s]} = D_{\lambda[r]}/ D_{\mu[s]}= \{\langle i,j \rangle \in \cm \mid (i,j) \in \bZ^2, \mu[s]_i<j \leq \lambda[r]_i \}$$
where $r,s \in \bZ$ and $\lambda,\mu \in  P_{mn}$. We call $\lambda[r]/\mu[s]$ the \emph{shape} of $D_{\lambda[r]/\mu[s]}$. For partitions $\lambda,\mu \in P_{mn}$ and a non-negative integer $d$, we denote $\lambda[d]/\mu[0]$ by $\lambda/d/\mu$.\\

A cylindric semistandard Young tableau of shape $\lambda[r]/\mu[s]$ is the map $T: D_{\lambda[r]/\mu[s]} \rightarrow \mathbb{N}$ satisfying $T(\langle i,j \rangle) \leq T(\langle i+1,j \rangle)$ when $\langle i,j \rangle,\langle i+1,j \rangle \in D_{\lambda[r]/\mu[s]}$ and $T(\langle i,j \rangle) < T(\langle i,j+1\rangle)$ when $\langle i,j \rangle,\langle i,j+1 \rangle \in D_{\lambda[r]/\mu[s]}$ . A sequence $(T^{-1}(1),T^{-1}(2),\ldots )$ is called the \emph{weight} of $T$, denoted by $w(T)$. Let $\x$ be the set of variables $(x_1,x_2,\ldots)$. Now we are ready to define the cylindric skew Schur functions.

\begin{definition} A cylindric skew Schur function $s_{\lambda[r]/\mu[s]}(\x)$ is defined by
$$s_{\lambda[r]/\mu[s]}(\x):= \sum_T \x^{w(T)}$$
where the sum runs over all cylindric semistandard Young tableaux $T$ of shape $\lambda[r]/\mu[s]$, and $\x^{(a_1,a_2,\cdots)}$ is $x_1^{a_1}x_2^{a_2} \cdots$.
\end{definition} 

A cylindric diagram D is called \emph{toric} if each row of $D$ has at most $n-m$ elements and each column of $D$ has at most $m$ elements. If $D_{\lambda/d/\mu}$ is toric, we call the shape $\lambda/d/\mu$ toric as well. 
Let us define a \emph{toric Schur polynomial} as the specialization
$$s_{\lambda/d/\mu}(x_1,\ldots,x_k)=s_{\lambda/d/\mu}(x_1,\ldots,x_k,0,0,\ldots)$$
of the cylindric skew Schur function $s_{\lambda/d/\mu}(\x)$ when $\lambda/d/\mu$ is toric.\\

Recall that $P_{mn}$ is the set of partitions $\lambda$ with $\lambda_1 \leq n-m$ and $\ell(\lambda)\leq m$. Postnikov \cite{postnikov.2005} showed that for $\lambda, \mu \in P_{mn}$, the toric Schur polynomial $s_{\lambda/d/\mu}(x_1,\ldots,x_k)$ is Schur-positive, i.e.,

\begin{align}\label{postnikov}
s_{\lambda/d/\mu}(x_1,\ldots,x_k)= \sum_{\nu \in P_{mn}} C^{\lambda,d}_{\mu,\nu} s_\nu(x_1,\ldots,x_k).
\end{align}

with non-negative integers $C^{\lambda,d}_{\mu,\nu}$. Moreover, $C^{\lambda,d}_{\mu,\nu}$ is the same as the Gromov-Witten invariants for the Grassmannian $Gr(m,n)$.\\

Here we briefly describe the Gromov-Witten invariants $C^{\lambda,d}_{\mu,\nu}$. Let $Gr(m,n)$ be the set of $m$-dimensional subspaces in $\bC^n$. The set $Gr(m,n)$ is a complex projective variety called the \emph{Grassmannian}. There is a cellular decomposition of $Gr(m,n)$ into Schubert cells $\Omega_\lambda^\circ$ where $\lambda$ is a partition in $P_{mn}$. Let $\Omega_\lambda$ be the Zariski closure of $\Omega_\lambda^\circ$, called the Schubert variety.

For a partition $\lambda \in P_{mn}$, let $\lambda^\vee$ be the partition in $P_{mn}$ defined by $\lambda^\vee_i= n-m- \lambda_{m-i+1}$ for $i=1,\ldots,m$. For partitions $\lambda,\mu,\nu \in P_{mn}$, the Gromov-Witten invariant $C^{\lambda,d}_{\mu,\nu}$ is the number of rational curves of degree $d$ passing through generic translates of the Schubert varieties $\Omega_{\lambda^\vee}, \Omega_\mu$ and $\Omega_\nu$ in the Grassmannian $Gr(m,n)$, when the number is finite. It implies that $C^{\lambda,d}_{\mu,\nu}$ is zero unless $|\lambda|+nd=|\mu|+|\nu|$. For $d=0$, the constant $C^{\lambda,d}_{\mu,\nu}$ is the same as the Littlewood-Richardson coefficients. \\

In general, cylindric skew Schur functions are not Schur-positive. However, McNamara \cite{mcnamara.2006} conjectured the following statement which is our main theorem.

\begin{thm} \label{thm.main}
For a cylindric shape $\lambda/d/\mu$ in $\cm$, let $s_{\lambda/d/\mu}$ be the cylindric skew Schur functions. Then 

\begin{enumerate}
\item$s_{\lambda/d/\mu}$ is cylindric Schur-positive, i.e.,
$$s_{\lambda/d/\mu}(\x)=\sum_{\nu\in P_{mn}, e\geq 0} c^{\lambda/d/\mu}_{\nu/e/\emptyset} s_{\nu/e/\emptyset}(\x),$$
with $c^{\lambda/d/\mu}_{\nu/e/\emptyset}\geq 0$. 

\item $c^{\lambda/d/\mu}_{\nu/e/\emptyset}$ is the same as $c^{\lambda/d-1/\mu}_{\nu/e-1/\emptyset}$ for all positive integers $e$, and $c^{\lambda/d/\mu}_{\nu/0/\emptyset}=C^{\lambda,d}_{\mu,\nu}$. 
\end{enumerate}
\end{thm}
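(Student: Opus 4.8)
The plan is to pass from the combinatorial side to the algebra of symmetric functions via Lam's theorem, and to reduce both assertions to statements about affine Stanley symmetric functions and the affine nilCoxeter algebra. By Lam's result each cylindric skew Schur function equals the affine Stanley symmetric function $\tilde{F}_w$ of a $321$-avoiding affine permutation $w=w(\lambda/d/\mu)\in\sn$, and each straight cylindric Schur function $s_{\nu/e/\emptyset}$ is the affine Schur function $\tilde{F}_v$ attached to a $321$-avoiding affine Grassmannian permutation $v=v(\nu/e)$. I would first record both expansions as identities of symmetric functions and fix, once and for all, the dictionary $(\lambda/d/\mu)\leftrightarrow w$ and $(\nu/e)\leftrightarrow v$, together with the diagonal-shift symmetry $s_{\lambda[r]/\mu[s]}=s_{\lambda[r+1]/\mu[s+1]}$ coming from the automorphism $\langle i,j\rangle\mapsto\langle i+1,j+1\rangle$ of $\cm$, which on the affine side is the Dynkin rotation $s_i\mapsto s_{i+1}$ under which $\tilde{F}$ is invariant. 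Since the cylindric Schur functions are distinct affine Schur functions, they are linearly independent, so the coefficients $c^{\lambda/d/\mu}_{\nu/e/\emptyset}$ are well defined.

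For part (1) I would invoke Lam's theorem that every affine Stanley symmetric function expands non-negatively in the affine Schur basis, $\tilde{F}_w=\sum_v a_{wv}\tilde{F}_v$ with $a_{wv}\in\bZ_{\ge 0}$. Cylindric Schur positivity is then equivalent to the claim that, when $w$ is $321$-avoiding, $a_{wv}=0$ unless $v$ is $321$-avoiding (hence of the form $v(\nu/e)$). This is the step I expect to be the main obstacle: controlling which affine Schur functions occur in the support of a fully commutative $\tilde{F}_w$. I would attack it inside the affine nilCoxeter algebra, expressing $a_{wv}$ through the expansion of the cyclically decreasing generating element and arguing that full commutativity of $w$ forces every contributing $v$ to be fully commutative, e.g. by showing that a factorization of $w$ into cyclically decreasing pieces cannot produce a long braid $s_is_{i\pm 1}s_i$ in any $v$ appearing with nonzero coefficient. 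Combined with the identification of $321$-avoiding Grassmannian $v$'s with the shapes $\nu/e/\emptyset$, this yields $s_{\lambda/d/\mu}=\sum_{\nu,e}a_{w,v(\nu/e)}\,s_{\nu/e/\emptyset}$ with $c^{\lambda/d/\mu}_{\nu/e/\emptyset}=a_{w,v(\nu/e)}\ge 0$.

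For statement (2a) I would use a degree-raising symmetry of the affine nilCoxeter algebra, the technical heart of the argument. The passage $d\mapsto d+1$ (resp. $e\mapsto e+1$) raises the degree by exactly $n$, so using the diagonal-shift symmetry it can be rewritten as shrinking the inner boundary by one diagonal while keeping the outer boundary fixed, on both the skew side and the straight side. I would realize this common operation as a single length-additive ``wrap'' $w\mapsto\hat{w}$ on $321$-avoiding affine permutations, compatible with both dictionaries, so that $w(\lambda/(d+1)/\mu)=\widehat{w(\lambda/d/\mu)}$ and $v(\nu/(e+1))=\widehat{v(\nu/e)}$, and then show, using the rotation automorphism together with the structure of cyclically decreasing elements, that the wrap preserves the structure constants, $a_{\hat w,\hat v}=a_{w,v}$. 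This is exactly $c^{\lambda/d/\mu}_{\nu/e/\emptyset}=c^{\lambda/(d-1)/\mu}_{\nu/(e-1)/\emptyset}$ for $e\ge 1$. Verifying that one and the same wrap implements both shifts, is length-additive, and preserves structure constants is the delicate point here.

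Finally, for the Gromov--Witten identification in (2b) I would specialize the expansion of part (1) to the first $m$ variables. The key input is that this specialization annihilates every strictly wrapped straight cylindric Schur function, $s_{\nu/e/\emptyset}(x_1,\dots,x_m)=0$ for $e\ge 1$, whereas $s_{\nu/0/\emptyset}(x_1,\dots,x_m)=s_\nu(x_1,\dots,x_m)$; this follows from the length bound built into the cylindric semistandard condition, since wrapping forces a strictly increasing run longer than $m$. The specialized identity then reads $s_{\lambda/d/\mu}(x_1,\dots,x_m)=\sum_{\nu}c^{\lambda/d/\mu}_{\nu/0/\emptyset}\,s_\nu(x_1,\dots,x_m)$, and comparing with Postnikov's expansion \eqref{postnikov} together with the linear independence of the $s_\nu(x_1,\dots,x_m)$ for $\nu\in P_{mn}$ yields $c^{\lambda/d/\mu}_{\nu/0/\emptyset}=C^{\lambda,d}_{\mu,\nu}$. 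Combined with (2a), this shows that every coefficient is a Gromov--Witten invariant, namely $c^{\lambda/d/\mu}_{\nu/e/\emptyset}=C^{\lambda,d-e}_{\mu,\nu}$, consistent with the degree constraint $|\nu|=|\lambda|-|\mu|+n(d-e)$.
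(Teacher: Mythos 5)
Your architecture is the same as the paper's (Lam's identification $s_{\lambda/d/\mu}=F_w$, Lam's positivity theorem, a support restriction for part (1), a ribbon/wrap argument for (2a), specialization to $m$ variables for (2b)), and your treatment of (2b) essentially coincides with the paper's proof. But at the two steps carrying all the content there are genuine gaps. For part (1), your key claim is both unproven and too weak as stated: you need not merely that contributing Grassmannian elements $v$ are $321$-avoiding, but that they lie in $\anmz$, i.e.\ also satisfy $\maxc(v)\leq m$ and $\maxr(v)\leq n-m$. A $321$-avoiding $0$-Grassmannian element need not index a type-$(m,n)$ shape: for $n=4$ and $m=1$, the element $v=s_3s_0$ is $321$-avoiding and $0$-Grassmannian, but $v=u_{\{3,0\}}$ is cyclically increasing, so $\maxc(v)=2>m$; it corresponds to the partition $(1,1)$, which has two rows and hence lies outside $P_{mn}$, so $F_v$ is not a cylindric Schur function of type $(1,4)$. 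Thus even granting your claim, part (1) would not follow. Moreover, your proposed mechanism (tracking whether factorizations of $w$ into cyclically decreasing pieces "produce a braid" in contributing $v$'s) has no clear route, precisely because no combinatorial description of the coefficients $c^w_v$ is known --- the paper emphasizes this is wide open. The paper's actual mechanism is a duality trick you do not have: by Theorem \ref{thm.cd}, built on the commutativity of the affine Fomin--Stanley algebra, $c^w_u=d^{\alpha}_{u,v}=d^{\alpha}_{v,u}$ where $\alpha=wv=\phi^{-1}(\lambda/d/\emptyset)$ and $v=\phi^{-1}(\mu/0/\emptyset)$; extracting the coefficient of $A_\alpha$ from $\s_v\,\s_u$ and using that $A_u$ is the unique $0$-Grassmannian term of $\s_u$ forces $\alpha=\beta u$ with $\ell(\alpha)=\ell(\beta)+\ell(u)$, and membership in $\anmz$ then passes from $\alpha$ to its length-additive right factor $u$.

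The same issue recurs in (2a): the invariance of the coefficients under the wrap is exactly the statement to be proved, and you defer it as "the delicate point." The paper proves it by combining the uniqueness of ribbon factorizations (Lemma \ref{lemma.wm2}, Corollary \ref{cor.ribbon}), the identity expressing $\bs_{r_m}$ as the sum of $A_v$ over all $n$-connected ribbons (Theorem \ref{thm.kschurrm}), the factorization $\bs_u=\bs_{u'}\bs_{r_m}$ for a ribbon decomposition $u=u'r_m$ (Proposition \ref{prop.decomp}), and once again commutativity, comparing coefficients of $A_w$ in $\s_{r_m}\s_{u'}$ to get $c^w_u=c^b_{u'}$ for the unique factorization $w=ab$ with $a$ an $n$-connected ribbon. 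Note also that the wrap is not "one and the same" operation on both sides, as your plan requires: on the straight side it is right multiplication by the fixed element $r_m$, while on the skew side it is left multiplication by a $w$-dependent ribbon $a$; reconciling these two different operations is precisely what the algebraic machinery (commutativity plus uniqueness of ribbon factors) accomplishes, and it is not available by inspection of shapes or by the Dynkin rotation alone.
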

To prove Theorem \ref{thm.main}, we first relate the cylindric Schur functions with affine Stanley symmetric functions.
\section{affine Stanley symmetric functions}
In this section, we review theory of affine Stanley symmetric functions and the affine nilCoxeter algebra. See \cite{lam.2006,lam.2008} for more details.
\subsection{Affine Stanley symmetric functions}
Let $\sn$ denote the affine symmetric group with simple generators $s_i$ for $i \in \bZ/n\bZ$ satisfying the relations
\begin{align*}
s_i^2&=1&\\
s_is_{i+1}s_i&=s_{i+1}s_is_{i+1}\\
s_is_j&=s_js_i&& \text{if } i-j\neq 1,-1.
\end{align*} 
where indices are taken modulo $n$. An element of the affine symmetric group may be written as a word in the generators $s_i$. A \emph{reduced word} of the element is a word of minimal length. The \emph{length} of $w$, denoted $\ell(w)$, is the number of generators in any reduced word of $w$. 

The subgroup of $\sn$ generated by $\{s_1,\cdots,s_{n-1}\}$ is naturally isomorphic to the symmetric group $S_n$. The \emph{$0$-Grassmannian elements} are minimal length coset representatives of $\sn/ S_n$. In other words, $w$ is $0$-Grassmannian if and only if all reduced words of $w$ end with $s_0$. More generally, for $i \in \mathbb{Z}/n\mathbb{Z}$ and $w \in \sn$, $w$ is called $i$-Grassmannian if all reduced words of $w$ end with $s_i$. We denote the set of $i$-Grassmannian elements by $\sn^i$. There is a weak order $<$ on $\sn$ defined by the covering relation $w\lessdot v$ fi $s_iw=v$ with $\ell(w)+1=\ell(v)$.

A word $s_{i_1}s_{i_2}\cdots s_{i_l}$ with indices in $\mathbb{Z}/n\mathbb{Z}$ is called \emph{cyclically decreasing} (resp. \emph{cyclically increasing}) if each letter occurs at most once and whenever $s_i$ and $s_{i+1}$ both occur in the word, $s_{i+1}$ precedes $s_i$ (resp. $s_i$ precedes $s_{i+1}$). For $J\varsubsetneq \mathbb{Z}/n\mathbb{Z}$, a \emph{cyclically decreasing element} $d_J$ (resp. a \emph{cyclically increasing element} $u_J$) is the unique cyclically decreasing permutation (resp. cyclically increasing permutation) in $\sn$ which uses exactly the simple generators in $\{s_j \mid j\in J\}$. For example, for $n=7$ and a subset $J=\{0,1,4,6\}$ of $\mathbb{Z}/7\mathbb{Z}=\{0,1,2,3,4,5,6\}$ we get $d_J=s_4s_1s_0s_6$ and $u_J=s_4s_6s_0s_1$ in $\widetilde{S}_7$. \\

For an element $w$ in $\sn$, we call $w_1w_2\cdots w_\ell$ a \emph{cyclically decreasing decomposition} of $w$ if $w_1w_2\cdots w_\ell =w$ satisfying $\ell(w_1)+\cdots+\ell(w_\ell)= \ell(w)$ and each $w_i$ is a cyclically decreasing element. Note that certain part $w_i$ can be an identity element. Now we are ready to define the affine Stanley symmetric functions.

\begin{definition}
For an element $w \in \sn$, an affine Stanley symmetric function $F_w$ is defined by
$$F_w(\x)= \sum  x_1^{\ell(w_1)}x_2^{\ell(w_2)}\cdots x_\ell^{\ell(w_\ell)}$$
where the sum runs over cyclically decreasing decompositions $w_1\cdots w_\ell$ of $w$.
\end{definition}

If $w$ is $0$-Grassmannian, we call $F_w$ an \emph{affine Schur function}. Let $\Lambda^{(n)}$ be the quotient of the ring of symmetric functions $\Lambda$ by monomial symmetric functions $m_\lambda$ for $\lambda_1\geq n$. Let $\Lambda_{(n)}$ be the subalgebra $\mathbb{Z}[h_1,\ldots,h_{n-1}]$ of $\Lambda$, where $h_i$ is a homogeneous symmetric functions of degree $i$. Let $\langle \cdot,\cdot \rangle_\Lambda$ be the Hall inner product of $\Lambda$. Then two algebras $\Lambda_{(n)}$ and $\Lambda^{(n)}$ are dual each other. 

Now we list known theorems about the affine Stanley symmetric functions.

\begin{thm} \cite{lam.2008,lapointe.morse.2007}
The affine Schur functions $\{ F_w \mid w \in \sn^0\}$ form a basis of $\Lambda^{(n)}$.
\end{thm}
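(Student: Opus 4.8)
The plan is to prove that the affine Schur functions $\{F_w \mid w \in \sn^0\}$ form a basis of $\Lambda^{(n)}$ by establishing a duality with the nilCoxeter/symmetric-function picture, and then invoking a triangularity argument against a known basis. First I would recall that $\Lambda^{(n)}$ has a natural basis given by the monomial symmetric functions $\{m_\lambda\}$ where $\lambda$ ranges over partitions with $\lambda_1 < n$ (equivalently, partitions whose parts are all at most $n-1$), since we have quotiented out exactly the $m_\lambda$ with $\lambda_1 \geq n$. The $0$-Grassmannian elements $w \in \sn^0$ are in bijection with such partitions via the standard correspondence between minimal coset representatives of $\sn/S_n$ and $n$-cores (or equivalently $(n{-}1)$-bounded partitions), so the two indexing sets have the same cardinality in each degree. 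This reduces the claim to showing that the transition matrix between $\{F_w\}$ and $\{m_\lambda\}$ is invertible in each graded piece.

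The key step is to set up the pairing with $\Lambda_{(n)} = \bZ[h_1,\ldots,h_{n-1}]$, using the stated fact that $\Lambda_{(n)}$ and $\Lambda^{(n)}$ are dual under the Hall inner product $\langle \cdot,\cdot\rangle_\Lambda$. Under this pairing the monomials $m_\lambda$ (with $\lambda_1 < n$) are dual to the products $h_\lambda = h_{\lambda_1}h_{\lambda_2}\cdots$, so it suffices to compute $\langle F_w, h_\lambda\rangle_\Lambda$ and show the resulting matrix is unitriangular with respect to a suitable ordering. By the definition of $F_w$ as a sum over cyclically decreasing decompositions $w_1\cdots w_\ell$ of $w$, the coefficient of a monomial $x^\alpha$ in $F_w$ counts decompositions with $\ell(w_i)=\alpha_i$; pairing against $h_\lambda$ extracts, after symmetrization, the number of decompositions of $w$ into cyclically decreasing factors of prescribed lengths $\lambda_1,\ldots,\lambda_\ell$. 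I would then argue that for a $0$-Grassmannian $w$ corresponding to a partition $\lambda(w)$, the diagonal entry (taking $\lambda = \lambda(w)$ in an appropriate dominance-type order on compositions) is $1$, coming from the unique factorization into maximal cyclically decreasing chunks read off from a canonical reduced word, while entries below the diagonal vanish. This gives a unitriangular, hence invertible, transition matrix.

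The linear independence of $\{F_w \mid w\in\sn^0\}$ follows immediately from invertibility of this matrix, and the spanning follows because the number of $0$-Grassmannian elements of each length matches $\dim \Lambda^{(n)}$ in that degree; together these give the basis statement. The main obstacle I anticipate is the combinatorial bookkeeping in the triangularity argument: verifying that the pairing $\langle F_w, h_\lambda\rangle_\Lambda$ is genuinely unitriangular requires a careful choice of ordering on partitions and a proof that a $0$-Grassmannian element admits exactly one cyclically decreasing decomposition of its own ``type'' and none of strictly smaller type. This is where the special structure of $0$-Grassmannian elements — namely that all their reduced words end in $s_0$ and that their cyclically decreasing pieces are forced by the $n$-core shape — must be exploited, and it is the step most likely to hide subtle edge cases near the cylindric wrap-around in $\znz$. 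Since the statement is quoted from \cite{lam.2008,lapointe.morse.2007}, I would ultimately lean on those references for the detailed verification, presenting the duality-plus-triangularity structure as the conceptual skeleton.
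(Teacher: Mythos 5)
First, a point of comparison: the paper never proves this statement. It is imported verbatim from \cite{lam.2008,lapointe.morse.2007} and used as a black box, so your proposal can only be measured against the standard argument in those references. Your skeleton is exactly that argument: the images of $\{m_\lambda \mid \lambda_1 \leq n-1\}$ form a basis of $\Lambda^{(n)}$; the map $w \mapsto \lambda(w)$ (via the maximal cyclically decreasing decomposition, equivalently the $n$-core correspondence) is a size-preserving bijection from $\sn^0$ to $(n-1)$-bounded partitions; pairing $F_w$ against $h_\lambda$ extracts the coefficient of $m_\lambda$, which counts cyclically decreasing decompositions of type $\lambda$; and one concludes by unitriangularity of the (square, in each degree) transition matrix. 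All of the duality and counting steps are set up correctly.

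The one substantive claim your plan rests on, however, is stated backwards, and as written it is false. You assert that a $0$-Grassmannian $w$ admits exactly one cyclically decreasing decomposition of its own type ``and none of strictly smaller type.'' Decompositions of strictly smaller type are plentiful: take $n=3$ and $w = s_1 s_0$, which is itself cyclically decreasing, so $\lambda(w)=(2)$; then $w=(s_1)(s_0)$ is a decomposition of type $(1,1)$, and indeed $F_w = m_{(2)} + m_{(1,1)}$. What the triangularity argument needs is the mirror statement: no decomposition has type lexicographically \emph{greater} than $\lambda(w)$, and exactly one attains $\lambda(w)$ itself. This is precisely Corollary \ref{denton.lemma2} of the paper (Denton's unique lex-maximal decomposition); note also that Denton's sequence $(|J_1|,\ldots,|J_p|)$ reads the factors right to left, which is harmless because $F_w$ is symmetric, so its $m_\mu$-coefficient counts decompositions whose length sequence is any fixed rearrangement of $\mu$. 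With this correction, the matrix expressing the $F_w$ in the $m_\mu$ is unitriangular with respect to lexicographic order and your bijection and degree count finish the proof; without it, the vanishing you claim simply does not hold, and the asserted triangularity fails at the very first nontrivial example.
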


\begin{thm}\cite{lam.2008}\label{thm.expansion}
The affine Stanley symmetric functions $F_w$ expand positively in terms of the affine Schur functions. Namely, we have
$$F_w= \sum_{v\in \sn^0} c^w_v F_v$$
for a non-negative integer $c^w_v$.
\end{thm}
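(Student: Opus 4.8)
The plan is to realize the affine Stanley symmetric functions as the coordinates of a single noncommutative generating series in the affine nilCoxeter algebra, and then to exploit the commutative subalgebra generated by the cyclically decreasing elements. Let $\mathbb{A}$ denote the affine nilCoxeter algebra, with generators $A_i$ for $i\in\znz$ subject to $A_i^2=0$ together with the braid and commutation relations, so that $A_w=A_{i_1}\cdots A_{i_\ell}$ is well defined for any reduced word $s_{i_1}\cdots s_{i_\ell}$ of $w\in\sn$ and $\{A_w\mid w\in\sn\}$ is a linear basis of $\mathbb{A}$. For each $k\geq 0$ set
$$h_k=\sum_{\substack{J\subsetneq\znz\\ \ell(d_J)=k}}A_{d_J},$$
the sum of all cyclically decreasing elements of length $k$, and put $\h(x)=\sum_{k\geq 0}h_k\,x^k$. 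Reading off the definition of $F_w$ through cyclically decreasing decompositions, the ordered product
$$\Omega(\x)=\h(x_1)\,\h(x_2)\,\h(x_3)\cdots=\sum_{w\in\sn}F_w(\x)\,A_w$$
has the affine Stanley symmetric functions as its coordinates in the $A_w$-basis.

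The first key step is the lemma, due to Lam, that the elements $h_k$ pairwise commute; they therefore generate a commutative subalgebra $\mathbb{B}\subseteq\mathbb{A}$, the affine Fomin-Stanley subalgebra, and every $x$-coefficient of $\Omega(\x)$ lies in $\mathbb{B}$. The second ingredient is that $\mathbb{B}$ carries a distinguished homogeneous basis $\{b_v\mid v\in\sn^0\}$ that is triangular with respect to the $A_w$-basis, in the sense that
$$b_v=A_v+\sum_{\substack{w\notin\sn^0\\ \ell(w)=\ell(v)}}d^w_v\,A_w.$$
Since the only basis element $b_v$ that contributes a term $A_{v_0}$ with $v_0\in\sn^0$ is $b_{v_0}$ itself, writing $\Omega(\x)=\sum_{v\in\sn^0}G_v\,b_v$ and comparing the coefficient of $A_{v_0}$ on both sides forces $G_{v_0}=F_{v_0}$. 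Substituting $\Omega(\x)=\sum_{v\in\sn^0}F_v\,b_v$ and collecting the coefficient of each $A_w$ gives
$$F_w=\sum_{v\in\sn^0}d^w_v\,F_v,$$
so the desired coefficients are exactly the structure coefficients of the Fomin-Stanley basis, namely $c^w_v=d^w_v$.

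This reduces the theorem to the inequality $d^w_v\geq 0$, which I expect to be the main obstacle: everything above is formal, whereas this positivity is genuinely deep. The plan for the final step is to invoke Peterson's theorem, in the form established by Lam, that $\mathbb{B}$ is isomorphic to the homology $H_*(\gr)$ of the affine Grassmannian, under which the basis $\{b_v\}$ is carried to the Schubert basis indexed by $\sn^0$. Through this identification the numbers $d^w_v$ are read off from Schubert classes, and their non-negativity follows from the positivity of Schubert calculus on the affine Grassmannian, where Schubert varieties represent effective cycles. A purely combinatorial proof of $d^w_v\geq 0$ would be preferable, but in the generality needed it is the geometric input that makes positivity — rather than the mere existence of the expansion guaranteed by the basis property — accessible.
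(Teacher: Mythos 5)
You should first note a structural point: the paper does not prove this statement at all --- it is imported from Lam \cite{lam.2008} as a known theorem --- so there is no internal proof to compare against. Judged on its own terms, the formal part of your argument (steps 1--4) is correct and is the standard route: the generating series $\Omega(\x)=\h(x_1)\h(x_2)\cdots$, Lam's commutativity of the $\h_i$, and Lam's triangular basis $b_v=\s_v$ of $\mathbb{B}$ (unique $0$-Grassmannian term $A_v$, coefficient $1$) together give $F_w=\sum_{v\in\sn^0} d^w_v F_v$ with $c^w_v=d^w_v$ equal to the coefficient of $A_w$ in $\s_v$. This is precisely what the paper records as Theorem \ref{thm.kschur}, and your derivation of it is sound, granted the two cited results of Lam.

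The gap is in your final step, and it sits exactly where the real content of the theorem lies. ``Positivity of Schubert calculus on the affine Grassmannian'' means non-negativity of the structure constants $d^\alpha_{u,v}$ of the Pontryagin ring $H_*(\gr)$ in its Schubert basis, equivalently that effective cycles expand non-negatively in Schubert classes \emph{of $H_*(\gr)$}. But $d^w_v$ is not such a quantity: it is the coordinate of the class $\s_v$ with respect to the basis $\{A_w\mid w\in\sn\}$ of the ambient noncommutative algebra $\mathbb{A}$, and $\mathbb{A}$ is not $H_*(\gr)$. Effectivity of Schubert cycles constrains expansions inside $H_*(\gr)$; it places no direct constraint on coordinates in the $A_w$-basis, so asserting that these coordinates ``are read off from Schubert classes'' and are therefore non-negative is, in effect, a restatement of the theorem rather than a proof of it. What is missing is the bridge identifying each $d^w_v$ with an honest structure constant: (a) complete the arbitrary element $w$ to a $0$-Grassmannian element $\alpha=wv'$ with $\ell(\alpha)=\ell(w)+\ell(v')$ --- such a $v'$ exists and automatically lies in $\sn^0$, but the existence is a nontrivial lemma (the paper's Lemma \ref{lemma.upperbound1}, up to the rotation symmetry $s_i\mapsto s_{i+1}$, or Morse--Schilling); and (b) compare the coefficient of $A_\alpha$ on both sides of $\s_v\s_{v'}=\sum_\beta d^\beta_{v,v'}\s_\beta$, using commutativity of $\mathbb{B}$ and the fact that $A_{v'}$ is the unique $0$-Grassmannian term of $\s_{v'}$, to conclude $d^w_v=d^\alpha_{v,v'}$. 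This is exactly the paper's Theorem \ref{thm.cd}, read in the other direction. Only after (a) and (b) does your geometric input (Peterson's isomorphism together with effectivity of convolution products of Schubert varieties) apply, giving $d^\alpha_{v,v'}\geq 0$ and hence the theorem; without that bridge the positivity does not follow from what you invoke. The repair is available within the paper itself, since Theorem \ref{thm.cd} and Lemma \ref{lemma.upperbound1} are proved there for other purposes.
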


Let $\{s^{(k)}_u \mid u \in \sn^0 \}$ be the dual basis of $\{F_u \mid u \in \sn^0\}$ of $\Lambda_{(n)}$ with respect to the induced Hall inner product $\langle \cdot, \cdot \rangle_\Lambda : \Lambda_{(n)} \times \Lambda^{(n)} \rightarrow \bZ$. Here, $k$ is equal to $n-1$ and will be equal throughout the paper. The symmetric function $s^{(k)}_u$ is called the \emph{$k$-Schur function}. It is known that the coefficients $c^w_v$ are the same as the structure constants of the $k$-Schur functions. We prove this fact by using the affine nilCoxeter algebra and noncommutative $k$-Schur functions in the next section.


\subsection{The affine nilCoxeter algebra}

The \emph{affine nilCoxeter algebra} $\mathbb{A}$ is the algebra generated by $A_0,A_1,\ldots,A_{n-1}$ over $\mathbb{Z}$, satisfying
\begin{align*}
A_i^2&=0&\\
A_iA_{i+1}A_i&=A_{i+1}A_i A_{i+1}\\
A_iA_j&=A_j A_i&& \text{if } i-j\neq 1,-1.
\end{align*} 
where the indices are taken modulo $n$. A subalgebra of $\mathbb{A}$ generated by $A_i$ for $i\neq 0$ is isomorphic to the nilCoxeter algebra studied by Fomin and Stanley \cite{fomin.stanley.1994}. The simple generators $A_i$ can be considered as the \emph{divided difference operators} in Kumar and Kostant's work. \\

The $A_i$ satisfy the same braid relations as the $s_i$ in $\sn$, i.e., $A_iA_{i+1}A_i=A_{i+1}A_iA_{i+1}$. Therefore it makes sense to define
\[ \begin{array} {rlll} A_w&=&A_{i_1}\cdots A_{i_l} & \mbox{where} \\ w&=&s_{i_1}\cdots s_{i_l} & \mbox{is a reduced decomposition.} \end{array} \]
One can check that 
\[A_vA_w=\left\{ \begin{array}{ll} A_{vw} & \mbox{if } \ell(vw)=\ell(v)+\ell(w) \\ 0 & \mbox{otherwise.}\end{array} \right. \]

Recall that $d_J$ is the cyclically decreasing element corresponding to a proper subset $J$ of $\znz$. For $i <n$, let
\[\h_i=\sum\limits_{\substack{ J\subset \mathbb{Z}/n\mathbb{Z} \\ |J|=i }} A_{d_J} \in \mathbb{A} \]
where $\h_0=1$ and $\h_i=0$ for $i< 0$ by convention. Lam \cite{lam.2006} showed that the elements $\{\h_i\}_{i<n}$ commute and freely generate a subalgebra $\mathbb{B}$ of $\mathbb{A}$, called the \emph{affine Fomin-Stanley algebra}. It is well-known that $\mathbb{B}$ is isomorphic to $\mathbb{Z}[h_1,\ldots,h_k]$ via the map sending $\h_i$ to $h_i$, where $h_i$ is a complete homogeneous symmetric function of degree $i$. Therefore, the set $\{\h_\lambda=\h_{\lambda_1}\ldots \h_{\lambda_l} \mid \lambda_1\leq k\}$ forms a basis of $\mathbb{B}$.\\

There is another basis of $\mathbb{B}$, called the \emph{noncommutative $k$-Schur functions} $\s_\lambda$, indexed by partitions $\lambda$ with $\lambda_1\leq k$. We call such a partition $\lambda$ a \emph{$k$-bounded partition}. There is a bijection between the set of $k$-bounded partitions and $\sn^0$ \cite{lapointe.morse.2005}. From now on, we also index the noncommutative $k$-Schur functions with $0$-Grassmannian elements using the bijection. We review the bijection between the set of $k$-bounded partitions and $\sn^0$ at the end of this section.\\

For an element $w \in \sn^0$, the noncommutative $k$-Schur function $\s_w$ is the image of $s^{(k)}_w$ via the isomorphism $\Lambda_{(k)}\cong \mathbb{B}$. Lam \cite{lam.2008} showed that the noncommutative $k$-Schur function $\s_w$ is the unique element in $\mathbb{B}$ that has the unique $0$-Grassmannian term $A_{w}$. The noncommutative $k$-Schur functions are non-equivariant version of the $j$ functions studied by Peterson \cite{peterson.1997} for affine type $A$. For details and the original definition of noncommutative $k$-Schur functions, see \cite{lam.2008,LLMSSZ}.\\

\begin{example}
For a positive integer $i<n$, we have $\s_{s_{i-1}s_{i-2}\ldots s_0} = \h_i$ since $\h_i$ is in $\mathbb{B}$ and $s_{i-1}s_{i-2}\ldots s_0$ is the unique cyclically decreasing element of length $i$ in $\sn^0$. Note that the $k$-bounded partition corresponding to $s_{i-1}s_{i-2}\ldots s_0$ is a partition $(i)$.
\end{example}

Lam showed the following theorem.
\begin{thm}\label{thm.kschur}\cite{lam.2008}
For $w \in \sn$, assume that we have an expansion
$$F_w = \sum_u c^w_u F_u$$
where the sum runs over $0$-Grassmannian elements $u$. Then we have
$$\s_u = \sum_{w \in \sn} c^w_u A_w$$

\end{thm}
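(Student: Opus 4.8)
The plan is to work with a single master generating function in the affine nilCoxeter algebra and to read it off in two different ways. First I would introduce the formal series
\[
\h(t)=\sum_{i\ge 0}\h_i\,t^i=\sum_{J\subsetneq\znz}t^{|J|}A_{d_J}
\]
(a polynomial in $t$, since $\h_i=0$ for $i\ge n$) and consider the product $\prod_{j\ge 1}\h(x_j)\in\mathbb{A}[[\x]]$. Expanding this product, a term $A_{d_{J_1}}A_{d_{J_2}}\cdots$ contributes $A_w$ precisely when $d_{J_1}d_{J_2}\cdots=w$ is a reduced factorization, i.e.\ a cyclically decreasing decomposition of $w$, and vanishes otherwise by the rule $A_vA_w=A_{vw}$ or $0$; collecting the monomials $x_1^{|J_1|}x_2^{|J_2|}\cdots$ exactly reproduces the definition of $F_w$ (identity factors being allowed). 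This gives
\[
\prod_{j\ge 1}\h(x_j)=\sum_{w\in\sn}F_w(\x)\,A_w. \qquad (\ast)
\]
Since the $\h_i$ commute, the left-hand side in fact lies in $\mathbb{B}[[\x]]$.

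Second, I would compute the same product inside $\mathbb{B}$. Applying the algebra isomorphism $\phi:\mathbb{B}\xrightarrow{\sim}\Lambda_{(n)}$, $\h_i\mapsto h_i$, coefficientwise sends $\h(t)$ to the truncated series $\sum_{i<n}h_i\,t^i$, so $\prod_j\h(x_j)$ maps to $\prod_j\bigl(\sum_{i<n}h_i\,x_j^i\bigr)$. Expanding and sorting exponents, the coefficient of $h_\lambda$ is $m_\lambda(\x)$ when $\lambda_1\le k$ and $0$ otherwise, because any rearrangement of a partition with a part $\ge n$ contains a forbidden exponent; hence this product equals $\sum_{\lambda_1\le k}h_\lambda\otimes m_\lambda(\x)$. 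This is exactly the reproducing kernel of the pairing $\langle\cdot,\cdot\rangle_\Lambda:\Lambda_{(n)}\times\Lambda^{(n)}\to\bZ$, since $\{h_\lambda\}_{\lambda_1\le k}$ and $\{m_\lambda\}_{\lambda_1\le k}$ are dual bases of $\Lambda_{(n)}$ and $\Lambda^{(n)}$. As the reproducing kernel is independent of the chosen dual pair, I may rewrite it through the dual bases $\{s^{(k)}_u\}$ and $\{F_u\}$ to get $\sum_{u\in\sn^0}s^{(k)}_u\otimes F_u(\x)$. Pulling back through $\phi^{-1}$ and recalling $\phi(\s_u)=s^{(k)}_u$ yields
\[
\prod_{j\ge 1}\h(x_j)=\sum_{u\in\sn^0}\s_u\,F_u(\x). \qquad (\ast\ast)
\]

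Finally, I would equate $(\ast)$ and $(\ast\ast)$, substitute the hypothesis $F_w=\sum_u c^w_u F_u$ into $(\ast)$, and regroup to obtain $\sum_{u}F_u(\x)\bigl(\sum_{w}c^w_u A_w\bigr)=\sum_u F_u(\x)\,\s_u$. Because $\{F_u\mid u\in\sn^0\}$ is a $\bZ$-basis of $\Lambda^{(n)}$ (hence linearly independent as power series) while $\mathbb{A}$ is free on $\{A_w\}$, I can extract the coefficient of each $A_w$ and invoke linear independence of the $F_u$ to conclude $\s_u=\sum_{w\in\sn}c^w_u A_w$; the sum is finite since $c^w_u=0$ unless $\ell(w)=\ell(u)$. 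The step I expect to be the main obstacle is the second one: one must check carefully that truncating each factor $\h(t)$ at degree $n-1$ yields precisely the $\Lambda_{(n)}\times\Lambda^{(n)}$ reproducing kernel and not the full $\Lambda\otimes\Lambda$ Cauchy kernel. This rests on the compatibility of the quotient $\Lambda^{(n)}$, the subalgebra $\Lambda_{(n)}$, and the induced pairing, together with the vanishing $\h_i=0$ for $i\ge n$ that makes the truncation automatic on the nilCoxeter side.
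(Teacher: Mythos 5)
Your proof is correct, but there is no internal proof to compare it against: the paper quotes Theorem \ref{thm.kschur} from \cite{lam.2008} as a black box and only proves its consequences (Theorem \ref{thm.cd}, Corollary \ref{cor.symmetry}). What you have written is essentially a reconstruction of Lam's original argument: your identity $(\ast)$ is the nilCoxeter description of affine Stanley symmetric functions going back to \cite{lam.2006}, and $(\ast\ast)$ is the truncated (affine) Cauchy kernel. The one step that should be made fully explicit is exactly the compatibility issue you flag yourself, and it reduces to a single concrete fact: a cyclically decreasing element $d_J$ has length $|J|\le k=n-1$, so every monomial of every $F_w$ has all exponents at most $k$; hence each $F_w$ lies in the $\mathbb{Z}$-span of $\{m_\lambda \mid \lambda_1\le k\}$, and the projection $\Lambda\to\Lambda^{(n)}$ restricts to an isomorphism from this span onto $\Lambda^{(n)}$. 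This observation is what licenses both of your key moves: rewriting the kernel $\sum_{\lambda_1\le k}h_\lambda\otimes m_\lambda$ through the dual pair $\{s^{(k)}_u\}$, $\{F_u\}$ at the level of honest power series, and invoking linear independence of the $F_u$ as power series in the final coefficient extraction. The dual-basis invariance of the kernel itself is legitimate because the pairing is graded with finite-rank graded components, so the change-of-basis matrices between $\{h_\lambda\}$ and $\{s^{(k)}_u\}$ (equivalently $\{m_\lambda\}$ and $\{F_u\}$) are inverse-transpose to one another over $\mathbb{Z}$ in each degree; likewise your finiteness remarks (that $c^w_u=0$ unless $\ell(w)=\ell(u)$, and that $\sn$ has finitely many elements of each length) make all regroupings valid degree by degree. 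With those points spelled out, the argument is complete and sound, and it has the merit of making the paper's citation self-contained.
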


The structure constants of $k$-Schur functions are the same as coefficients $c^w_u$ appearing in Theorem \ref{thm.kschur}. For $w \in \sn^0$, consider the coefficients of $A_w$ in the each side of the identity
$$\s_u \s_v =\sum _{\alpha \in \sn^0} d^\alpha_{u,v} \s_\alpha.$$
The coefficient of $A_w$ in the right-hand side is $d^w_{u,v}$. On the other hand, the coefficient of $A_w$ in the left-hand side is $\sum c^{u'}_u c^{v'}_v$ where the sum runs over elements $u' , v'$ in $\sn$ such that $u'v'=w$ with $\ell(u')+\ell(v')=\ell(w)$. Since $w$ is $0$-Grassmannian, $v'$ has to be $0$-Grassmannian as well. By the property of the noncommutative Schur functions $v'$ must be equal to $v$ and the summation becomes $c^{u'}_u$ where $u'$ satisfies $u'v=w$ with $\ell(u')+\ell(v')=\ell(w)$. Therefore, we have the following.

\begin{thm}\label{thm.cd}
For $w,u,u',v \in \sn$ satisfying $w=u'v$ with $\ell(w)=\ell(u')+\ell(v)$, we have
$c^{u'}_u=d^w_{u,v}$.
\end{thm}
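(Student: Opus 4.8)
The plan is to read off the identity by expanding the defining relation $\s_u \s_v = \sum_{\alpha \in \sn^0} d^\alpha_{u,v}\s_\alpha$ of the structure constants inside the affine nilCoxeter algebra $\mathbb{A}$, and then comparing the coefficient of a single basis element $A_w$ on each side, where $w\in\sn^0$ is the $0$-Grassmannian element under consideration. Using Theorem \ref{thm.kschur} to write $\s_u=\sum_{a\in\sn}c^a_u A_a$, $\s_v=\sum_{b\in\sn}c^b_v A_b$, and $\s_\alpha=\sum_{c\in\sn}c^c_\alpha A_c$, together with the multiplication rule $A_aA_b=A_{ab}$ when $\ell(ab)=\ell(a)+\ell(b)$ and $A_aA_b=0$ otherwise, the single algebra identity becomes a family of scalar identities indexed by $w$, from which the claim should drop out.

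First I would handle the right-hand side. Its coefficient of $A_w$ is $\sum_{\alpha\in\sn^0}d^\alpha_{u,v}\,c^w_\alpha$, where $c^w_\alpha$ denotes the coefficient of $A_w$ in $\s_\alpha$. Here both $w$ and $\alpha$ are $0$-Grassmannian, so I would invoke the characterization that $\s_\alpha$ is the unique element of $\mathbb{B}$ whose only $0$-Grassmannian term is $A_\alpha$, with coefficient $1$. This forces $c^w_\alpha=\delta_{w,\alpha}$ for $w,\alpha\in\sn^0$ and collapses the right-hand side to the single value $d^w_{u,v}$.

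Next I would compute the left-hand side, whose coefficient of $A_w$ is $\sum c^a_u c^b_v$, summed over reduced factorizations $w=ab$ with $\ell(a)+\ell(b)=\ell(w)$. The key step is to argue that every such right factor $b$ is itself $0$-Grassmannian: concatenating a reduced word of $a$ with any reduced word of $b$ yields a reduced word of $w$, which must end in $s_0$ because $w$ is $0$-Grassmannian, and the last letter of this word is the last letter of the chosen reduced word of $b$; since the chosen word was arbitrary, every reduced word of $b$ ends in $s_0$, i.e.\ $b\in\sn^0$. For $b\in\sn^0$ the affine Stanley symmetric function $F_b$ is already the affine Schur function $F_b$, so $c^b_v=\delta_{b,v}$ and only the term $b=v$ survives. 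The corresponding left factor is then the unique element $u'$ with $u'v=w$ and $\ell(u')+\ell(v)=\ell(w)$, so the left-hand coefficient equals $c^{u'}_u$.

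Equating the two sides yields $c^{u'}_u=d^w_{u,v}$, as desired. The main obstacle I anticipate is the $0$-Grassmannian factorization argument together with correctly pinning down the normalization $c^\alpha_\alpha=1$: one must verify that $b\neq e$ so that the final letter of $b$ is genuinely the final letter of the word for $w$, and one must use the full strength of the ``unique $0$-Grassmannian term'' property to get coefficient exactly $1$ rather than merely support on $A_\alpha$. Everything else in the argument is formal coefficient extraction in $\mathbb{A}$.
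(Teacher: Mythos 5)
Your proposal is correct and follows essentially the same route as the paper: both compare the coefficient of $A_w$ on the two sides of $\s_u\s_v=\sum_{\alpha\in\sn^0}d^\alpha_{u,v}\s_\alpha$, use that the right factor of any reduced factorization of the $0$-Grassmannian element $w$ is itself $0$-Grassmannian, and invoke the unique-$0$-Grassmannian-term property of the noncommutative $k$-Schur functions to collapse each side to a single term. Your write-up merely makes explicit two points the paper leaves implicit (the reduced-word argument for why the right factor is $0$-Grassmannian, and the degenerate case $b=e$), which is a faithful elaboration rather than a different proof.
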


Since the affine Fomin-Stanley algebra is commutative, we have the following symmetry of the coefficients $c^{u'}_u$.

\begin{corollary}\label{cor.symmetry}
For $\alpha=w_1 w_2 = v_1 v_2$ in $\sn^0$ such that $\ell(w_1)=\ell(v_2)$, $\ell(w_2)=\ell(v_1)$, and $\ell(\alpha)=\ell(w_1)+\ell(w_2)=\ell(v_1)+\ell(v_2)$, we have $c^{w_1}_{v_2} = c^{v_1}_{w_2}$. 
\end{corollary}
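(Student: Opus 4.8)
The plan is to deduce the corollary directly from Theorem \ref{thm.cd} together with the commutativity of the affine Fomin-Stanley algebra $\mathbb{B}$, so that essentially no new computation is required. First I would record a preliminary observation: since $\alpha$ is $0$-Grassmannian and $\alpha=w_1w_2$ with $\ell(\alpha)=\ell(w_1)+\ell(w_2)$, the right factor $w_2$ is again $0$-Grassmannian. Indeed, any right descent $s_i$ of $w_2$ is a right descent of $\alpha$, because $\ell(\alpha s_i)=\ell(w_1 w_2 s_i)\leq \ell(w_1)+\ell(w_2 s_i)=\ell(\alpha)-1$, and the right descent set of a $0$-Grassmannian element is contained in $\{s_0\}$; the same argument shows $v_2\in\sn^0$. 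This is exactly the fact invoked in the discussion preceding Theorem \ref{thm.cd}, and it guarantees that the noncommutative $k$-Schur functions $\s_{w_2}$ and $\s_{v_2}$, hence the structure constants used below, are well defined.

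Next I would apply Theorem \ref{thm.cd} to each of the two factorizations of $\alpha$. Applying it to $\alpha=w_1w_2$ with $u'=w_1$, $v=w_2$ and choosing the lower index $u=v_2$ gives
$$c^{w_1}_{v_2}=d^\alpha_{v_2,w_2}.$$
Applying it to $\alpha=v_1v_2$ with $u'=v_1$, $v=v_2$ and choosing $u=w_2$ gives
$$c^{v_1}_{w_2}=d^\alpha_{w_2,v_2}.$$
Finally, since $\mathbb{B}$ is commutative, the structure constants defined by $\s_u\s_v=\sum_\alpha d^\alpha_{u,v}\s_\alpha$ are symmetric in their lower indices, so $d^\alpha_{v_2,w_2}=d^\alpha_{w_2,v_2}$, and the two displayed identities combine to give $c^{w_1}_{v_2}=c^{v_1}_{w_2}$, as claimed.

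There is no serious obstacle here; the entire content sits in Theorem \ref{thm.cd} and the commutativity of $\mathbb{B}$, and the remaining work is bookkeeping. The two points requiring care are verifying $w_2,v_2\in\sn^0$ so that the symbols $c^{w_1}_{v_2}$, $c^{v_1}_{w_2}$ and $d^\alpha_{\cdot,\cdot}$ are legitimate, and observing that the length hypotheses $\ell(w_1)=\ell(v_2)$ and $\ell(w_2)=\ell(v_1)$ are precisely the degree constraints under which the structure constants $d^\alpha_{v_2,w_2}$ and $d^\alpha_{w_2,v_2}$ can be nonzero; given $\ell(\alpha)=\ell(w_1)+\ell(w_2)$, either equality forces the other, and in their absence all four quantities vanish for degree reasons and the identity holds trivially. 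Thus the symmetry is an immediate consequence of commutativity transported through the dictionary of Theorem \ref{thm.cd}.
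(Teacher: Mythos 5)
Your proof is correct and follows exactly the paper's argument: the paper likewise applies Theorem \ref{thm.cd} to each factorization of $\alpha$ to get $c^{w_1}_{v_2}=d^{\alpha}_{v_2,w_2}$ and $c^{v_1}_{w_2}=d^{\alpha}_{w_2,v_2}$, then invokes commutativity of the affine Fomin--Stanley algebra to identify the two structure constants. Your additional checks (that $w_2,v_2\in\sn^0$ and that the length hypotheses are exactly the degree constraints) are sound and simply make explicit what the paper leaves implicit in the discussion preceding Theorem \ref{thm.cd}.
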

\begin{proof}
By Theorem \ref{thm.cd}, we have $c^{w_1}_{v_2}=d^{\alpha}_{v_2,w_2}=d^\alpha_{w_2,v_2}=c^{v_1}_{w_2}$.
\end{proof}

Now we review Denton's work regarding affine permutations to explain the bijection between the set of $k$-bounded partitions and $\sn^0$. For $w\in \sn$, consider the unique maximal $J\varsubsetneq \mathbb{Z}/n\mathbb{Z}$ such that $w=v d_J$ with $\ell(w)=\ell(v)+\ell(d_J)$ for some $u \in \sn$. Denton showed the such $d_J$ is unique in the following sense:

\begin{lemma} \cite[Corollary 18]{denton.2012}\label{denton.lemma}
For given an affine permutation $w$, there is a unique subset $J$ of $\znz$ so that whenever we have $w=u d_{J'}$ for some $u$ and a cyclically decreasing element $d_{J'}$ with $\ell(w)=\ell(u)+\ell(d_{J'})$, $J$ contains $J'$.
\end{lemma}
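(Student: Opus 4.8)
The statement asserts the existence of a unique maximal cyclically decreasing "right factor" of any affine permutation $w$, in the strong sense that its index set $J$ contains the index set $J'$ of every cyclically decreasing right factor. The plan is to prove this by first establishing that the collection of valid index sets $J'$ is closed under taking unions, and then taking $J$ to be the union of all of them. Concretely, I would define
\[
\mathcal{J}(w) = \{ J' \subsetneq \znz \mid w = u\, d_{J'} \text{ for some } u \in \sn \text{ with } \ell(w) = \ell(u) + \ell(d_{J'}) \}.
\]
This set is nonempty since $\emptyset \in \mathcal{J}(w)$ (with $d_\emptyset = 1$). The goal is to show $\mathcal{J}(w)$ has a unique maximal element that is simultaneously a maximum, i.e. that it contains every other member.

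The key structural fact I would aim to isolate is a description of $\mathcal{J}(w)$ in terms of the \emph{right descent data} of $w$. A cyclically decreasing element $d_{J'}$ is a right factor with $\ell(w)=\ell(u)+\ell(d_{J'})$ precisely when $w$ can be written with a reduced word ending in a reduced word for $d_{J'}$; since $d_{J'}$ uses each generator $s_j$ ($j\in J'$) exactly once in cyclically decreasing order, membership $J'\in\mathcal{J}(w)$ should be equivalent to a purely local condition on the right descents of $w$. First I would prove the elementary reduction: $s_j$ is a right descent of $w$ (that is, $\ell(ws_j)<\ell(w)$) for every $j\in J'$ whenever $J'\in\mathcal{J}(w)$, and conversely I would characterize exactly which descent sets can be "peeled off" cyclically. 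The decisive lemma is the \textbf{exchange/union property}: if $J_1, J_2 \in \mathcal{J}(w)$, then $J_1 \cup J_2 \in \mathcal{J}(w)$. I would prove this by induction on $|J_1 \cup J_2| - |J_1|$, repeatedly showing that a single index $j \in J_2 \setminus J_1$ can be adjoined to $J_1$: because $s_j$ is a right descent of $w$ and of the appropriate intermediate factor, and because the braid and commutation relations among the $s_i$ respect the cyclically decreasing order, one can rearrange a reduced word so that $d_{J_1 \cup \{j\}}$ appears as a right factor without increasing total length.

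Once the union-closure property is in hand, the lemma follows immediately: since $\sn$ has finitely many generators, $\mathcal{J}(w)$ is a finite set of subsets of $\znz$ closed under union, so it has a unique maximum $J = \bigcup_{J' \in \mathcal{J}(w)} J'$, and by construction this $J$ contains every $J' \in \mathcal{J}(w)$. I expect the main obstacle to be the inductive step of the union-closure lemma, specifically verifying that adjoining an index $j \in J_2 \setminus J_1$ to the right factor is always compatible with the cyclically decreasing ordering constraint. The subtlety is that "cyclically decreasing" is a global condition on the relative order of adjacent pairs $s_i, s_{i+1}$ around the whole cycle $\znz$, so adding a new generator could in principle conflict with the ordering already forced by $J_1$; the crux is to show that whenever both $s_j$ and $s_{j\pm1}$ are forced to be descents, the cyclic order is consistent, which is where the structure of reduced words of $w$ (rather than just its descent set) must be used. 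A careful treatment here may require tracking the window notation or the inversion set of $w$ to rule out the conflicting case, and I would expect to reduce to verifying it for the two-generator patterns $s_j s_{j+1}$ arising from the braid relation.
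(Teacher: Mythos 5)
The paper does not actually prove this lemma at all---it quotes it from Denton \cite[Corollary 18]{denton.2012}---so your proposal can only be measured against the cited source and on its own terms. Your skeleton is the right one (and is the natural route to Denton's statement): show that the family $\mathcal{J}(w)$ of admissible index sets is closed under unions, then invoke finiteness to extract a maximum. But both concrete steps you build this on are false as stated. First, the ``elementary reduction'' fails: $J' \in \mathcal{J}(w)$ does not force every $s_j$ with $j \in J'$ to be a right descent of $w$. Already $w = s_1 s_0 = d_{\{0,1\}}$ (any $n \ge 3$) has $\{0,1\} \in \mathcal{J}(w)$ while its only right descent is $s_0$; for the same reason $\mathcal{J}(w)$ is not determined by the right descent set at all ($s_0$ and $s_1 s_0$ have the same descent set but different $\mathcal{J}$'s). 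The correct local criterion is a window-notation condition: for each maximal interval $[a,b]$ of $J'$ one needs $w(a) > w(i)$ for all $a < i \le b+1$, which involves comparisons strictly beyond adjacent positions. Second, the inductive step---adjoining an \emph{arbitrary} single index $j \in J_2 \setminus J_1$---is false. Take $n = 5$ and $w$ with window $(w(0),\ldots,w(4)) = (2,0,4,1,3)$. Then $J_1 = \{0\}$ and $J_2 = \{2,3\}$ both lie in $\mathcal{J}(w)$ (peel off $s_2$, then $s_3$: $w(2) > w(3)$, and the swapped window $(2,0,1,4,3)$ has a descent at position $3$), but for $j = 3$ the set $J_1 \cup \{3\} = \{0,3\}$ is \emph{not} in $\mathcal{J}(w)$: since $s_0$ and $s_3$ commute, $d_{\{0,3\}}$ is a right factor iff both are right descents, and $w(3) = 1 < 3 = w(4)$. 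So indices must be adjoined in a carefully chosen order (left endpoints of arcs of $J_2$ first), and the verification must run through the window inequalities, not through descent sets or reduced-word rearrangement.

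There is also a case you never address: $J_1 \cup J_2 = \znz$, in which case $d_{J_1 \cup J_2}$ simply does not exist (cyclically decreasing elements are indexed by \emph{proper} subsets), so ``closed under union'' is not even well posed. Ruling this out is a genuine part of the proof and is exactly where affineness enters: chaining the window inequalities attached to the arcs of $J_1$ and $J_2$ all the way around the circle would give $w(a) > w(a+n) = w(a) + n$, a contradiction; whereas when $J_1 \cup J_2$ is proper, the same chaining, started at a position $a$ with $a-1 \notin J_1 \cup J_2$ (so that every arc met along the way has its left endpoint weakly to the right of $a$), concatenates the inequalities and proves $J_1 \cup J_2 \in \mathcal{J}(w)$. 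With these repairs---the window criterion in place of descents, ordered adjunction or direct chaining, and the properness argument---your architecture does go through; as written, the key union-closure lemma is unproven.
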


We denote such a set $J$ by $D(w)$ and $|J|$ by $\maxr(w)$ Similarly, consider the unique maximal set $J'\varsubsetneq \mathbb{Z}/n\mathbb{Z}$ such that $w=v u_{J'}$ with $\ell(w)=\ell(v)+\ell(u_{J'})$ for some $v \in \sn$ and denote $J'$ (resp. $|J'|$) by $U(w)$ (resp. $\maxc(w)$). Note that by repeating Lemma \ref{denton.lemma}, we get the following corollary.

\begin{corollary}\cite[Corollary 20]{denton.2012}\label{denton.lemma2} 
Every affine permutation $w$ has a unique maximal decomposition into cyclically decreasing elements, i.e., $w=d_{J_p}\cdots d_{J_1}$ where $\ell(w)=\sum_{i=1}^p |J_i|$ and the sequence $(|J_1|,|J_2|,\ldots,|J_p|,0,0,\ldots)$ is the maximum with respect to the lexicographic order.
\end{corollary}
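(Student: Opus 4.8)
The plan is to iterate Lemma \ref{denton.lemma}, peeling off one cyclically decreasing factor from the right at each stage, exactly as suggested by the sentence preceding the statement. First I would set $J_1 = D(w)$ and use the defining property of $D(w)$ to factor $w = w^{(1)} d_{J_1}$ with $\ell(w) = \ell(w^{(1)}) + |J_1|$. Then I would repeat the construction on $w^{(1)}$, setting $J_2 = D(w^{(1)})$ and writing $w^{(1)} = w^{(2)} d_{J_2}$, and so on. To see that this terminates, I would observe that whenever $w^{(i)} \neq e$, any reduced word of $w^{(i)}$ ends in some generator $s_j = d_{\{j\}}$, so $\{j\} \subseteq D(w^{(i)}) = J_{i+1}$ and hence $|J_{i+1}| \geq 1$; thus the lengths $\ell(w^{(i)})$ strictly decrease and the process stops after finitely many steps, say $p$, with $w^{(p)} = e$. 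Reversing the order of the factors gives the asserted decomposition $w = d_{J_p} \cdots d_{J_1}$ with $\ell(w) = \sum_i |J_i|$.

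For the extremal and uniqueness properties, the key observation is that Lemma \ref{denton.lemma} supplies not merely a length bound but a set containment. Given any cyclically decreasing decomposition $w = d_{K_p} \cdots d_{K_1}$ with lengths adding, the rightmost factor $d_{K_1}$ is a length-additive right factor of $w$, so the lemma forces $K_1 \subseteq D(w) = J_1$, whence $|K_1| \le |J_1|$. Thus $|J_1|$ is the largest possible value of the first entry of the length sequence. Moreover, since $d_J$ is the unique cyclically decreasing element using exactly the generators indexed by $J$, the equality $|K_1| = |J_1|$ together with $K_1 \subseteq J_1$ forces $K_1 = J_1$, and hence $w^{(1)} = w d_{J_1}^{-1}$ is uniquely determined.

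The lexicographic maximality and uniqueness of the whole decomposition then follow by induction on $\ell(w)$. Any competing decomposition whose length sequence is lexicographically at least as large as ours must agree with it in the first coordinate, hence must have $K_1 = J_1$ by the previous paragraph, so deleting the rightmost factor leaves a cyclically decreasing decomposition $d_{K_p}\cdots d_{K_2}$ of $w^{(1)}$; applying the inductive hypothesis to $w^{(1)}$ identifies the remaining factors with $J_2, \ldots, J_p$. I expect the main care to lie precisely in this inductive step, namely in matching the direction of the lexicographic comparison (which weights the rightmost factor $J_1$ most heavily) against the direction in which Lemma \ref{denton.lemma} strips factors off, and in confirming that the lexicographic maximum is \emph{realized} by the greedy choice rather than merely bounded by it. Once the first coordinate is pinned down as above, the remainder is a routine descent.
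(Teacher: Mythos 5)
Your proposal is correct and takes essentially the same route as the paper: the paper proves this corollary simply by citing Denton and noting that it follows "by repeating Lemma \ref{denton.lemma}," which is exactly the greedy iteration you carry out. Your added details (termination via the final generator of a reduced word, the containment $K_1 \subseteq D(w)$ forcing lexicographic maximality, and the induction on length for uniqueness) are all sound fillings-in of that one-line argument.
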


Denton also showed that a sequence $(|J_1|,|J_2|,\ldots,|J_p|,0,0,\ldots)$ forms a partition such that the first part is at most $k$, which is a $k$-bounded partition. When $w$ is $0$-Grassmannian, the map sending $w$ to a $k$-bounded partition $$\lambda=(|J_1|,|J_2|,\ldots,|J_p|)$$ becomes a bijection. Each set $J_i$ is determined by the size $|J_i|=\lambda_i$ ,by $J_i=[-j+1,\lambda_j-j]$ \cite[Corollary 39]{denton.2012}.

\section{Combinatorics on $321$-avoiding permutations}

An element $w\in \sn$ is called $321$\emph{-avoiding} if any reduced word of $w$ does not contain a subword $s_is_{i+1}s_i$ for any $i \in I$. Lam \cite{lam.2006} showed that a cylindric skew Schur function is an affine Stanley symmetric function for some $w\in \sn$. In this section, we develop combinatorics on $321$-avoiding affine permutations and reprove and generalize the above result more systematically. Then we use these results to prove Theorem \ref{thm.main}.\\

If $\lambda[r]/\mu[s]$ is a cylindric shape of type $(m,n)$, let $s_i\cdot (\lambda[r]/\mu[s])$ be a cylindric shape $\lambda'[r']/\mu[s]$ of type $(m,n)$ such that $D_{\lambda'[r']/\mu[s]} = D_{\lambda[r]/\mu[s]} \cup \{ \langle p,q \rangle \mid q-p=i \}$ for some $p,q \in \mathbb{Z}$ if such $p$ and $q$ exists, and $s_i\cdot (\lambda[r]/\mu[s])$ is not defined otherwise. If $\alpha=(\ldots, \alpha_{-1},\alpha_0,\alpha_1,\ldots)$ is the $(m,n)$-periodic increasing bi-infinite sequence corresponding to $\lambda[r]$, then $s_i\cdot (\lambda[r]/\mu[s])$ is well-defined if and only if there is an integer $p$ such that $\alpha_p+1-p= i$ modulo $n$ and $\alpha_{p}<\alpha_{p+1}$. Visually, one can obtain $D_{\lambda'[r']}$ from $D_{\lambda[r]}$ by adding a box at $i$-th diagonal when possible. In this case, $s_i\cdot (\lambda[r]/\mu[s])$ is $\lambda'[r']/\mu[s]$ where the $(m,n)$-periodic increasing bi-infinite sequence corresponding to $\lambda'[r']$ is $\beta$ where $\beta_j=\alpha_j + 1$ if $j=p$ modulo $m$ and $\alpha_j$ otherwise. Hence we also define an action $s_i$ on the set of $(m,n)$-periodic increasing sequences similarly. In this paper, bi-infinite sequences are always $(m,n)$-periodic and increasing.\\

For an element $A_i$ in $\mathbb{A}$, one can define an action on the set of cylindric shapes of type $(m,n)$ by
$$A_i \cdot (\lambda[r]/\mu[s]) = \begin{cases} s_i\cdot (\lambda[r]/\mu[s]) & \text{if } s_i\cdot (\lambda[r]/\mu[s]) \text{ is well-defined} \\ 0 & \text{otherwise.} \end{cases}$$

Note that this gives an action of $\mathbb{A}$ on the set $C_{mn}$ of cylindric shapes of type $(m,n)$. In fact, the following is true.

\begin{lemma}\label{lemma.ai}
As an action on the set $C_{mn}$ of cylindric shapes of type $(m,n)$ (or the set of $(m,n)$-periodic bi-infinite sequences), the generators $A_i$ satisfy the following.
\begin{enumerate}
\item $A_i^2=0$,
\item $A_iA_j=A_jA_i \quad$ if $i-j \neq \pm 1$,
\item $A_iA_{i+1}A_i=A_{i+1}A_iA_{i+1}= 0$,
\item For a cyclically decreasing element $w$ of length $>n-m$, $A_w =0$,

\item For a cyclically increasing element $w$ of length $>m$, $A_w =0$.
\end{enumerate}
\begin{proof}
Note that it is enough to show that $A_i$'s satisfy the above relations as an action on the set of $(m,n)$-periodic bi-infinite sequences. 

$(1)$ is obvious. To prove $(2)$, for distinct elements $i,j \in \mathbb{Z}/n\mathbb{Z}$ satisfying $i-j\neq \pm 1$, note that $A_iA_j \cdot \lambda[r]$ is not $0$ if and only if one can add two boxes at diagonal $i$ and $j$ and in this case $A_i A_j \cdot \lambda[r]$ is obtained by adding such two boxes in $\cm$. Since $j-i\neq \pm$, such two boxes cannot be adjacent and we have $A_i A_j \cdot \lambda[r] = A_j A_i \cdot\lambda[r]$. For $(3)$, assume that $A_iA_{i+1}A_i \cdot \alpha$ is $\beta$ for some $(m,n)$-periodic sequence $\beta$. Then $D_{\beta/\alpha}$ has the cardinality $3$ and has two distinct boxes $\langle a_1,a_2\rangle ,\langle b_1,b_2 \rangle$ at $i$-th diagonal. Since there is only one $i$-th diagonal in $\cm$, without lose of generality one can assume that $\langle a_1+1,a_2+1\rangle =\langle b_1,b_2\rangle$. Since $D_{\beta/\alpha}$ is a cylindric diagram, both $\langle a_1+1,a_2\rangle$ and $\langle a_1,a_2+1\rangle $ are in $D_{\beta/\alpha}$. Since each box lies in $(i-1)$-th and $(i+1)$-th diagonals respectively, $A_i A_{i+1}A_i \cdot\alpha$ cannot be $\beta$. Also one can use the same argument to show that $A_iA_{i-1}A_i\cdot \alpha=0$.\\

To prove $(4)$, assume that for a cyclically decreasing element $w$, we have $A_w \cdot \alpha = \beta$ for some $(m,n)$-periodic sequences $\alpha,\beta$. Then one can show that $D_{\beta/\alpha}$ is a \emph{horizontal strip}, i.e., there are no two boxes at the same column. Indeed, if there are two adjacent boxes at the same column then two boxes are at $i$-th diagonal and $(i-1)$-th diagonal respectively for some $i \in \bZ/n \bZ$. In any reduced word of $w$, $s_i$ precedes $s_{i-1}$ so that $w$ is not a cyclically decreasing element. Note that for any cylindric diagram in $\cm$ there are at most $n-m$ columns so that $\ell(w)\leq n-m$, proving $(4)$. Proof of $(5)$ is similar to the proof of $(4)$.
\end{proof}
\end{lemma}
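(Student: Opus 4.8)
The plan is to follow the reduction announced in the proof and verify all five relations as operators on the set of $(m,n)$-periodic increasing bi-infinite sequences $\alpha$, since every $A_i$ fixes the inner boundary $\mu[s]$ and only alters the outer sequence attached to $\lambda[r]$. Before treating the relations I would record two elementary dictionary facts that drive everything. First, the box that $A_i$ adds lies on the $i$-th diagonal, so in any cylindric diagram two boxes sharing a column lie on diagonals differing by $1$, and two boxes sharing a row likewise lie on consecutive diagonals (in the opposite sense). Second, a difference of two ordered ideals $D_\beta/D_\alpha$ is \emph{convex}, i.e.\ closed under the interval operation $[a,b]_{\mathfrak{C}}$: this is immediate from $D_\alpha,D_\beta$ being down-closed, since $a\notin D_\alpha$ and $a\leq_{\mathfrak{C}} c$ force $c\notin D_\alpha$, while $c\leq_{\mathfrak{C}}b\in D_\beta$ forces $c\in D_\beta$.

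With convexity in hand, relations $(1)$ and $(3)$ become two instances of a single forcing argument. Suppose $A_i^2\cdot\alpha$ (resp.\ $A_iA_{i+1}A_i\cdot\alpha$) equals a nonzero sequence $\beta$. Then $D_{\beta/\alpha}$ is a convex cylindric diagram containing two distinct boxes on diagonal $i$; because $\cm$ has a single representative of each diagonal, these two boxes have the form $\langle a,b\rangle$ and $\langle a+t,b+t\rangle$ with $t\geq 1$ and are comparable under $<_{\mathfrak{C}}$. Convexity then forces their whole interval into $D_{\beta/\alpha}$, in particular the boxes $\langle a+1,b\rangle$ and $\langle a,b+1\rangle$, which lie on diagonals $i-1$ and $i+1$. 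This already exhibits at least four boxes in $D_{\beta/\alpha}$, contradicting the fact that $A_i^2$ (resp.\ $A_iA_{i+1}A_i$) adds only two (resp.\ three) boxes. The identity $A_{i+1}A_iA_{i+1}=0$ follows by the same argument applied to the two boxes on diagonal $i+1$.

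For relation $(2)$, when $i-j\neq\pm1$ the diagonal-$j$ corner is never adjacent to the diagonal-$i$ corner, so adding one neither creates nor destroys the addable corner of the other: applying $A_i$ alters $\alpha$ only at the entry (and its $m$-translates) governing diagonal $i$, which can affect the addability test $\alpha_p<\alpha_{p+1}$ only at diagonals $i$ and $i\pm1$, leaving a diagonal-$j$ corner untouched, and symmetrically for $A_j$. Hence both orders produce the same pair of boxes and $A_iA_j=A_jA_i$. For relations $(4)$ and $(5)$ I would use the diagonal--adjacency dictionary together with the order in which boxes can be added: if two boxes added by a cyclically decreasing $w$ shared a column they would lie on diagonals $i$ and $i-1$ with the upper box added first, whereas the cyclically decreasing condition (that $s_i$ precede $s_{i-1}$) forces the opposite order, a contradiction; thus $A_w$ adds a horizontal strip. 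Since a horizontal strip meets each column at most once and a cylindric diagram of type $(m,n)$ occupies at most $n-m$ columns, we get $\ell(w)\leq n-m$, and the dual row-count argument for cyclically increasing $w$ gives a vertical strip and $\ell(w)\leq m$.

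The step I expect to be the main obstacle is relation $(3)$: everything there hinges on upgrading ``two boxes on the same diagonal'' into a genuine $2\times2$ obstruction, which requires both the convexity of $D_{\beta/\alpha}$ and the fact that the cylinder carries a single copy of each diagonal, so that the two boxes are comparable and their interval is truly forced. Once that forcing is set up cleanly it simultaneously yields $(1)$, and the remaining relations $(2)$, $(4)$, $(5)$ reduce to bookkeeping with the diagonal--adjacency dictionary.
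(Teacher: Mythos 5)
Your proposal is correct and follows essentially the same route as the paper: reduce to $(m,n)$-periodic sequences, use interval-closure (convexity) of $D_{\beta/\alpha}$ to force boxes on diagonals $i\pm 1$ from two boxes on diagonal $i$ for relations $(1)$ and $(3)$, argue non-interaction of corners on non-adjacent diagonals for $(2)$, and use the horizontal-strip/column-count argument (with its row dual) for $(4)$ and $(5)$. The only differences are stylistic refinements — you make the convexity of a difference of ordered ideals explicit, unify $(1)$ with $(3)$ under one forcing argument, and avoid the paper's unnecessary reduction to consecutive boxes ($t=1$) on the diagonal — all within the paper's approach.
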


Due to Lemma \ref{lemma.ai}, we define the quotient algebra $\am$ of $\mathbb{A}$ by the relations $(3)-(5)$ in Lemma \ref{lemma.ai}. One can rephrase the conditions $(4)$-$(5)$ using the following lemma.

\begin{lemma}\label{maxr}
For $w \in \sn$ and a positive integer $q<n$, $\maxr(w)> q$ if and only if there exist $v_1,v_2 \in \sn$ and $S\varsubsetneq \bZ/n\bZ$ satisfying $w= v_1 d_S v_2$ and $\ell(w)=\ell(v_1)+\ell(v_2)+|S|$ and $|S|>q$. Similarly, $\maxc(w)>q$ if and only if $w= v_1 u_S v_2$ for some $S\varsubsetneq \bZ/n\bZ$ with $\ell(w)=\ell(v_1)+\ell(v_2)+|S|$ and $|S|>q$.
\end{lemma}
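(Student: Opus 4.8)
The plan is to prove the statement for $\maxr$ and then deduce the statement for $\maxc$ formally. The tool for the deduction is the Dynkin diagram automorphism $\rho\colon s_i\mapsto s_{-i}$ of $\sn$ (reflection of the cyclic diagram): it is a length- and product-preserving automorphism, and since reversing the cycle swaps the roles of $s_i$ and $s_{i+1}$, it carries each cyclically decreasing element $d_S$ to the cyclically increasing element $u_{-S}$. Hence $\maxc(w)=\maxr(\rho(w))$, and a factorization $w=v_1 u_S v_2$ corresponds bijectively to $\rho(w)=\rho(v_1)\,d_{-S}\,\rho(v_2)$, so the $\maxc$ equivalence is exactly the $\maxr$ equivalence applied to $\rho(w)$.

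The forward implication is immediate: if $\maxr(w)>q$ then, by the definition of $D(w)$ via Lemma \ref{denton.lemma}, we may write $w=v\,d_{D(w)}$ with $\ell(w)=\ell(v)+|D(w)|$, and taking $v_1=v$, $S=D(w)$, and $v_2$ the identity gives a decomposition with $|S|=\maxr(w)>q$. For the converse I first record a monotonicity fact: whenever $a=bc$ with $\ell(a)=\ell(b)+\ell(c)$, one has $\maxr(a)\ge\maxr(c)$, since writing $c=c'\,d_{D(c)}$ length-additively exhibits $d_{D(c)}$ as a length-additive cyclically decreasing right factor of $a$. Applying this to $w=v_1\,(d_S v_2)$ gives $\maxr(w)\ge\maxr(d_S v_2)$, so it suffices to establish the following Key Claim: for any length-additive product $d_S v$ we have $\maxr(d_S v)\ge|S|$. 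I would prove the Key Claim by induction on $\ell(v)$. The base case $v=e$ is $\maxr(d_S)=|S|$. For the inductive step write $v=v's_j$; the inductive hypothesis applied to the left factor $d_S v'$ produces a length-additive factorization $d_S v'=z\,d_T$ with $d_T$ cyclically decreasing and $|T|\ge|S|$, so that $w=z\,(d_T s_j)$, and monotonicity reduces everything to a statement about appending a single generator to a cyclically decreasing element.

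\emph{Sub-claim:} if $d_T$ is cyclically decreasing and $\ell(d_T s_j)=|T|+1$, then $\maxr(d_T s_j)\ge|T|$. This is the heart of the argument, and the step I expect to be the main obstacle. To prove it I would use that $d_T$ factors as a commuting product over the maximal cyclic runs (arcs) of $T$, each arc contributing a strictly decreasing string $s_b s_{b-1}\cdots s_a$, and then analyze the position of $j$ relative to these arcs. When $j$ is isolated or extends an arc at its bottom, $d_T s_j=d_{T\cup\{j\}}$ is itself cyclically decreasing of size $|T|+1$. The delicate case is when $s_j$ attaches at the top of an arc $s_{j-1}\cdots s_a$ (or when $j\in T$): there $d_T s_j$ is not cyclically decreasing, but commuting $s_j$ leftward past $s_{j-2},\dots,s_a$ and applying a single braid move rewrites it as $d_T s_j=s_{j-1}\,d_{T'}$ with $T'=(T\cup\{j\})\setminus\{j-1\}$ cyclically decreasing of size $|T|$; thus $d_{T'}$ is a length-additive cyclically decreasing right factor, giving $\maxr(d_T s_j)\ge|T|$.

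Assembling these pieces, the inductive step reads $\maxr(w)\ge\maxr(d_T s_j)\ge|T|\ge|S|$, completing the Key Claim and hence the converse direction; the $\maxr$ equivalence with $|S|>q$ follows. The surrounding induction and the monotonicity lemma are entirely formal, and I expect essentially all of the real content to sit in the bookkeeping of the arc decomposition and the single exchange move underlying the Sub-claim; the only care needed there is to check that the rewriting is genuinely length-additive and that the case analysis (isolated vertex, bottom extension, top extension, and $j\in T$) is exhaustive.
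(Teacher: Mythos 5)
Your proposal is correct in substance, but it takes a genuinely different route from the paper. The paper proves the nontrivial direction in one algebraic stroke: setting $\ell=\ell(v_2)$ and $p=|S|$, it uses Lam's theorem that the elements $\h_i$ commute, i.e.\ $(\s_{s_0})^{\ell}\h_p=\h_p(\s_{s_0})^{\ell}$ in the affine Fomin--Stanley algebra, observes that the coefficient of $A_{d_Sv_2}=A_{d_S}A_{v_2}$ in $\h_p(\s_{s_0})^{\ell}$ is positive, and notes that every nonzero term of $(\s_{s_0})^{\ell}\h_p$ has the form $A_uA_{d_{S'}}$ with $\ell(u)=\ell$ and $|S'|=p$; this yields $d_Sv_2=ud_{S'}$ length-additively with $|S'|$ \emph{exactly} $|S|$, whence $\maxr(w)\geq |S|>q$. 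Your induction on $\ell(v_2)$, pushing $d_S$ rightward one simple generator at a time by commutations and braid moves, is precisely the combinatorial content that this algebraic identity encapsulates: it is more elementary (it never invokes commutativity of $\mathbb{B}$) at the cost of a case analysis, and your monotonicity lemma plus the reduction to the Sub-claim are sound. Your handling of $\maxc$ via the diagram automorphism $s_i\mapsto s_{-i}$ is also a clean substitute for the paper's implicit ``similarly.''

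Two details in the Sub-claim need repair, though neither breaks the argument. First, in the case $j\in T$ (which forces $j-1\in T$, since $d_Ts_j$ reduced rules out $j$ being the bottom of its arc), your formula $T'=(T\cup\{j\})\setminus\{j-1\}$ gives a set of size $|T|-1$, contradicting your own size claim; the correct rewriting there is $d_Ts_j=s_{j-1}d_T$ (commute $s_j$ leftward to create $s_js_{j-1}s_j$, braid it to $s_{j-1}s_js_{j-1}$, then commute the leading $s_{j-1}$ to the front), which still gives $\maxr(d_Ts_j)\geq|T|$. Second, your closing list of cases omits the ``sandwiched'' case $j\notin T$, $j-1\in T$, $j+1\in T$; there your formula for $T'$ is in fact correct and only commutations (no braid move) are needed, since after moving $s_j$ left one only ever sees the two-letter configuration $s_{j-1}s_j$. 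With these bookkeeping corrections your argument is complete, and it even recovers slightly more than needed (a factor of size $|T|+1$ in the isolated/bottom-extension cases), whereas the paper's method pins the size at exactly $|S|$.
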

\begin{proof}
'only if' statement is obvious, since there exist an affine permutation $u$ and $S\subset \znz$ satisfying $w=ud_S$ with $\ell(w)=\ell(u)+|S|$ and $|S|=\maxr(w)$ by the definition of $\maxr(w)$. To proof 'if' statement, assume that we have $w=v_1 d_S v_2$ satisfying conditions in the above lemma. First we prove that there exist an element $u \in \sn$ and $S'\varsubsetneq \bZ/n\bZ$ satisfying $ d_S v_2 = u d_{S'}$ with $\ell(v_2)=\ell(u)$ and $|S'|=|S|$. Let $\ell$ be $\ell(v_2)$ and $p$ be $|S|$, and consider the identity $(\s_{s_0})^{\ell} \h_p = \h_p (\s_{s_0})^{\ell}$. The coefficient of $A_{d_S v_2}=A_{d_S}A_{v_2}$ in $\h_p(\s_{s_0})^{\ell} $ is at least 1, since the coefficient of $A_{v_2}$ at $(\s_{s_0})^\ell$ is positive (it is equal to the number of reduced words of $v_2$) and the coefficient of $A_{d_S}$ at $\h_p$ is one. Therefore, the coefficient of $A_{d_Sv_2}$ in $ (\s_{s_0})^{\ell}\h_p$ is positive. Since every term in $(\s_{s_0})^{\ell}\h_p $ is of the form $A_u d_{S'}$ for some affine permutation $u \in \sn$ with $\ell(u)=\ell$ and $S'\varsubsetneq \bZ/n\bZ$ with $|S'|=p$, there are such $u$ and $S'$ satisfying $d_Sv_2=ud_{S'}$ and we proved the claim. By the claim, we have $w=v_1 u d_{S'}$ with $\ell(w)=\ell(v_1)+\ell(u)+|S'|$ and $\maxr(w)\geq |S'|>q$ by the definition of $\maxr$.
\end{proof}
By Lemma \ref{maxr},  $\am$ can also be defined by the quotient of $\mathbb{A}$ by the relation $(1)$-$(3)$ and the relation
$$A_w=0 \quad \text{if } \maxr(w)>n-m \text{ or } \maxc(w)>m.$$
We abuse a notation that the image of $A_i$ under the projection $\mathbb{A}\rightarrow \am$ is denoted by the same $A_i$.
Therefore, the set $\{A_w \mid w\in \sn \text{ is } 321\text{-avoiding},\maxc(w) \leq m, \maxr(w) \leq n-m \}$ forms a basis of $\am$. We denote the set $\{w \mid w\in \sn \text{ is } 321\text{-avoiding},\maxc(w) \leq m, \maxr(w) \leq n-m \}$ by $\anm$. We denote the set $\sn^0 \cap \anm$ by $\anmz$.\\

Let $\bm$ be the subalgebra of $\am$ generated by $\bh_1,\ldots, \bh_{n-m}$, where $\bh_i$ is the image of $\h_i$ via the projection $\mathbb{A}\rightarrow \am$. For $i<n$, let ${\bf e}_i$ be an element in $\mathbb{A}$ defined by $\sum_w A_w$ where the sum runs over cyclically increasing elements $w$ of length $i$. Let ${\bf \overline{e}}_i$ be the image of ${\bf e}_i$ via the projection $\mathbb{A}\rightarrow \am$. For $w \in \sn^0$, let $\bs_w$ be the image of $\s_w$ via the projection $\mathbb{A}\rightarrow \am$. We show the following lemmas for later purposes.

\begin{lemma}\label{lemma.ij}
For $J\varsubsetneq \bZ/n\bZ$, $A_{d_J}A_i$ is zero in $\am$ if $i \in J$. Similarly, $A_i A_{d_J}$, $A_{u_J}A_i$ and $A_i A_{u_J}$ are zero in $\am$ if $i\in J$.
\end{lemma}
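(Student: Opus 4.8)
The plan is to reduce every one of the four products to a computation inside a single ``arc'' of $J$ and then invoke relations (1)--(3) of Lemma \ref{lemma.ai}. Since $J\varsubsetneq\znz$ is proper it has at least one gap, so $J$ decomposes into maximal cyclic intervals (arcs) $[a_1,b_1],\ldots,[a_t,b_t]$, each a genuine interval of integers. If $s_p$ and $s_q$ lie in distinct arcs then $p,q$ are non-adjacent in $\znz$ (otherwise the two arcs would merge, violating maximality), so by relation (2) all generators coming from distinct arcs commute. Within the arc $[a,b]$ the cyclically decreasing element restricts to the decreasing run $s_b s_{b-1}\cdots s_a$ and the cyclically increasing element to the increasing run $s_a s_{a+1}\cdots s_b$. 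Hence I can write $d_J = w'\cdot v$ (or $v\cdot w'$), with $v$ the run of the arc containing $i$ and $w'$ the product of the remaining arcs; since each generator is used once, reordering the commuting arc-factors is length-additive, so $A_{d_J}=A_{w'}A_v$ (resp. $A_vA_{w'}$), and likewise for $u_J$. Because $A_vA_i$ (resp. $A_iA_v$) will vanish already in $\am$, multiplying by $A_{w'}$ then kills the whole product, so it suffices to treat a single run.

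Next I would carry out the one-arc computation and let symmetry dispatch the four cases. For $A_{d_J}A_i$ with $a\le i\le b$, after factoring we face $A_v A_i$ with $v=s_b\cdots s_a$. If $i=a$, i.e. $i-1\notin J$, then $v$ ends in $s_a=s_i$, so $A_vA_i$ contains $A_iA_i$ and dies by relation (1). If $i>a$, i.e. $i-1\in J$ sits in the same arc, then $A_vA_i=A_b\cdots A_{i+1}A_iA_{i-1}A_{i-2}\cdots A_a\,A_i$; using relation (2) to push the final $A_i$ left past the commuting $A_{i-2},\ldots,A_a$ gives $A_b\cdots A_{i+1}\,(A_iA_{i-1}A_i)\,A_{i-2}\cdots A_a$, which is $0$ by relation (3). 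The remaining three products are mirror images: for $A_iA_{d_J}$ one factors the arc to the left and meets $A_i^2$ when $i=b$ ($i+1\notin J$) and the braid $A_iA_{i+1}A_i$ when $i<b$; for $A_{u_J}A_i$ and $A_iA_{u_J}$ one replaces the decreasing run by $s_a\cdots s_b$ and obtains, respectively, $A_i^2$ at the top or bottom of the arc and otherwise the braids $A_iA_{i+1}A_i$ or $A_iA_{i-1}A_i$, all annihilated by (1) and (3).

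The one genuine point to verify — the main, though minor, obstacle — is that the commutations used to assemble each braid are legitimate, i.e. that there is no cyclic wraparound inside a single arc. This is where properness of $J$ is used a second time: a single arc has length at most $n-1$, so $b-a\le n-2$, whence for indices $p,q$ in the arc one has $p-q\equiv\pm1\pmod n$ if and only if $|p-q|=1$ as integers. Consequently $A_i$ commutes with every $A_j$ in the arc with $|i-j|\ge 2$, and the only adjacency it encounters when pushed toward the end of the run is with $A_{i-1}$ or $A_{i+1}$, producing exactly the braid that relation (3) kills. Recording this, together with the length-additivity of the arc-factorization noted above, makes the four vanishing statements follow uniformly.
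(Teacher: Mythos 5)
Your proof is correct and takes essentially the same route as the paper's: both isolate the maximal interval (arc) of $J$ containing $i$, and then either meet $A_i^2$ when $i$ sits at the end of that arc, or commute $A_i$ past the non-adjacent generators to produce the braid $A_iA_{i-1}A_i$ (or $A_iA_{i+1}A_i$), which vanishes by relations $(1)$ and $(3)$ of Lemma \ref{lemma.ai}. Your explicit justification of the length-additive arc factorization and of the absence of cyclic wraparound inside a single arc simply spells out steps the paper leaves implicit.
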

\begin{proof}
Let $[p,q]$ be the maximal interval containing $i$ and contained in $J$. Here $[p,q] =\{p,p+1,\ldots,q-1,q\}$ modulo $n$. It is enough to show that $A_q A_{q-1}\ldots A_{p+1}A_p A_i$ is zero in $\am$. If $p=i$ then it is clearly zero so assume that $p\neq i$. Since $i$ is contained in $[p,q]=[p,i-2]\cup [i-1,q]$, we have
$$A_q A_{q-1}\ldots A_{p+1}A_p A_i=(A_q \ldots A_{i+1} A_i A_{i-1}) A_i (A_{i-2} A_{i-3}\ldots A_{p+1}A_p)$$
so this element contains a word $A_i A_{i-1}A_i$ hence it is zero in $\am$. The proof of the second statement is similar, hence omitted.
\end{proof}
\begin{remark}\label{remark.ij}
Note that the proof of Lemma \ref{lemma.ij} only uses the relations $(1)$-$(3)$, which uses the extra condition $321$-avoiding.
\end{remark}

\begin{lemma}\label{lemma.ribbon}
For $J,J' \in \znz$ such that $J\cap J' $ is nonempty, $A_{e_J}A_{d_{J'}}$ is zero in $\am$. Similarly, $A_{d_{J'}}A_{e_J}$ is also zero in $\am$.
\end{lemma}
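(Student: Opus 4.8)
The plan is to localize the whole computation at a single shared index and reduce the product to a short word that dies by a braid relation. Here $e_J=u_J$ denotes the cyclically increasing element attached to $J$. Fix an index $i\in J\cap J'$, and let $[a,b]$ (resp.\ $[c,e]$) be the maximal cyclic interval of residues modulo $n$ that lies in $J$ (resp.\ $J'$) and contains $i$. Since $J,J'\varsubsetneq\znz$, each interval has length at most $n-1$, so neither wraps all the way around the circle. First I would record the interval factorizations $u_J=u_{J\setminus[a,b]}\,u_{[a,b]}$ and $d_{J'}=d_{[c,e]}\,d_{J'\setminus[c,e]}$, both length-additive because the generators of a maximal interval are non-adjacent to those of the remaining intervals. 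Writing $P=A_aA_{a+1}\cdots A_b$ and $Q=A_eA_{e-1}\cdots A_c$, these give $A_{u_J}A_{d_{J'}}=A_{u_{J\setminus[a,b]}}\,(PQ)\,A_{d_{J'\setminus[c,e]}}$, so it suffices to prove $PQ=0$ in $\am$.

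To see $PQ=0$, note both words contain the letter $A_i$ and that the junction of $PQ$ is $A_bA_e$, and split on the comparison of the two right ends. If $b\le e$, then $A_b$ occurs inside $Q$, so $Q=(A_e\cdots A_{b+1})A_b(A_{b-1}\cdots A_c)$; commuting the final letter $A_b$ of $P$ rightward past $A_e,\ldots,A_{b+2}$ (all non-adjacent to $b$) brings it next to $A_{b+1}$, producing the subword $A_bA_{b+1}A_b$, which is $0$ by relation $(3)$ of Lemma \ref{lemma.ai} (and if $b=e$ one gets $A_b^2=0$ directly). If instead $b>e$, then $A_e$ occurs inside $P$, so $P=(A_a\cdots A_{e-1})A_e(A_{e+1}\cdots A_b)$; commuting the leading letter $A_e$ of $Q$ leftward to meet $A_{e+1}$ produces $A_eA_{e+1}A_e=0$. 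Either way $PQ=0$, hence $A_{u_J}A_{d_{J'}}=0$.

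The remaining identity $A_{d_{J'}}A_{u_J}=0$ is handled the same way with the ends reversed: the relevant product is now $QP$ with junction $A_cA_a$, and one splits on the comparison of the left ends $a$ and $c$, commuting the inner letter to create $A_jA_{j\pm1}A_j=0$ exactly as above (equivalently, apply the index-reversing automorphism $A_i\mapsto A_{-i}$, which exchanges cyclically increasing and cyclically decreasing elements and preserves relations $(1)$–$(3)$). The one point demanding care — and the main obstacle — is the bookkeeping of cyclic adjacency in these commutations: I must check that every letter being commuted past is genuinely non-adjacent modulo $n$, i.e.\ that no unintended wraparound $j\equiv j'\pm1\pmod n$ occurs. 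This is exactly where the hypothesis $J,J'\varsubsetneq\znz$ is used, since properness of the cyclic intervals $[a,b]$ and $[c,e]$ keeps the spans $e-b$ and $b-e$ (resp.\ their left-end analogues) below $n-1$, so the only adjacency ever encountered is the intended one triggering the braid relation. As in Remark \ref{remark.ij}, this argument uses only relations $(1)$–$(3)$, so the $321$-avoiding condition alone suffices.
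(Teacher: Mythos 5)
Your proof is correct, but it takes a genuinely different route from the paper's. The paper first observes that $A_{e_J}A_{d_{J'}}$ is automatically zero in $\am$ unless $w=e_Jd_{J'}$ is reduced and $321$-avoiding, and then derives a contradiction from reduced-word combinatorics: for $i\in J\cap J'$ the letter $s_i$ occurs twice in a reduced word of $w$, so $321$-avoidance forces both $s_{i-1}$ and $s_{i+1}$ to occur between the two $s_i$'s, which is impossible since $e_J$ is cyclically increasing (its $s_{i-1}$ precedes its $s_i$) and $d_{J'}$ is cyclically decreasing (its $s_{i-1}$ follows its $s_i$). You never pass through $321$-avoidance of the product at all; instead you factor out the maximal intervals $[a,b]\subseteq J$ and $[c,e]\subseteq J'$ around a common residue and commute a single letter explicitly until a factor $A_jA_{j\pm 1}A_j$ or $A_j^2$ appears, which is exactly the style of the paper's own proof of Lemma \ref{lemma.ij}. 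Your case split (read cyclically: walking upward from $i$ one reaches $b$ no later than $e$, or strictly later) is exhaustive, and your appeal to properness of $J$ and $J'$ to rule out wraparound adjacencies is precisely the needed check, so the details are sound. What your approach buys is self-containedness: it uses only relations $(1)$--$(3)$ of Lemma \ref{lemma.ai}, whereas the paper's argument quietly relies on the standard fact that between two occurrences of $s_i$ in a reduced word of a $321$-avoiding element both neighboring generators must intervene (a fact whose proof is essentially the commutation you carry out). What the paper's approach buys is brevity and symmetry: no interval factorization or case analysis is needed, and the second identity follows by the same one-line argument, while you need either the mirrored computation or the index-reversal automorphism $A_i\mapsto A_{-i}$ --- which, as you correctly note, is legitimate only because your argument stays inside the quotient by relations $(1)$--$(3)$; that automorphism would not respect relations $(4)$--$(5)$ of Lemma \ref{lemma.ai} unless $m=n-m$.
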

\begin{proof}
Assume that $\ell(e_J)+\ell(d_{J'})=\ell(e_J d_{J'})$ and $e_J d_{J'}$ is $321$-avoiding, otherwise $A_{e_J}A_{d_{J'}}$ is zero in $\am$. Choose an element $i$ in $J\cap J'$. Since $s_i$ appears twice in any reduced word of $e_J d_{J'}$ and $e_J d_{J'}$ is $321$-avoiding, both $s_{i-1}$ and $s_{i+1}$ appears between two $s_i$'s in the reduced word. However, $i-1$ cannot be in $J$ since otherwise $s_i$ appeared left of $s_{i-1}$ precedes $s_{i-1}$ and it contradicts with the fact that $e_J$ is cyclically decreasing. Also, $i-1$ cannot be in $J$ since otherwise $s_{i-1}$ precedes the $s_i$ that appears right of this $s_{i-1}$. This makes a contradiction and we are done. One can prove the second statement similarly.
\end{proof}
An element $w\in \sn$ is called $n$\emph{-connected ribbon} if $w$ is of the form $u_{J^c}d_{J}$ for some $J\varsubsetneq \mathbb{Z}/n\mathbb{Z}$.
For such a $w$, if $A_w$ is nonzero in $\am$, then $|J^c|\leq m$ and $|J|\leq n-m$ by Lemma \ref{maxr} so the inequalities become the equality.\\

For the rest of the section, we study combinatorics of $n$-connected ribbon to prove Proposition \ref{prop.decomp}. Then we prove Theorem \ref{thm.main}.

\begin{corollary}\label{cor.ribbon}
If $w_1, w_2$ are $n$-connected ribbons such that $\ell(w_1w_2)=\ell(w_1)+\ell(w_2)$ and $A_{w_1w_2}$ is nonzero in $\am$, then $w_1=w_2$.
\end{corollary}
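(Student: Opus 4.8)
The plan is to localize the whole question at the single interface between $w_1$ and $w_2$ and then invoke Lemma~\ref{lemma.ribbon}. Write $w_1=u_{J_1^c}d_{J_1}$ and $w_2=u_{J_2^c}d_{J_2}$ for nonempty proper subsets $J_1,J_2\subsetneq\znz$. Since $\ell(w_1w_2)=\ell(w_1)+\ell(w_2)$ we have $A_{w_1w_2}=A_{w_1}A_{w_2}$ in $\am$, and as this is nonzero both $A_{w_1}$ and $A_{w_2}$ are nonzero. By the remark following the definition of an $n$-connected ribbon (which rests on Lemma~\ref{maxr}), nonvanishing forces $|J_1|=|J_2|=n-m$ and $|J_1^c|=|J_2^c|=m$; in particular each decomposition $w_i=u_{J_i^c}d_{J_i}$ is length-additive, so that $A_{w_1w_2}=A_{u_{J_1^c}}A_{d_{J_1}}A_{u_{J_2^c}}A_{d_{J_2}}$.

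First I would isolate the central factor $A_{d_{J_1}}A_{u_{J_2^c}}$. Because a single vanishing factor annihilates the entire word, the nonvanishing of $A_{w_1w_2}$ forces $A_{d_{J_1}}A_{u_{J_2^c}}\neq 0$ in $\am$. This is exactly a product of a cyclically decreasing factor followed by a cyclically increasing factor, so Lemma~\ref{lemma.ribbon} (applied with $J'=J_1$ and index set $J_2^c$) says it vanishes whenever $J_1\cap J_2^c\neq\emptyset$. Hence nonvanishing yields $J_1\cap J_2^c=\emptyset$, i.e.\ $J_1\subseteq J_2$. Combining $J_1\subseteq J_2$ with $|J_1|=|J_2|=n-m$ gives $J_1=J_2$, and therefore $w_1=w_2$.

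The argument introduces no genuinely new ingredient beyond Lemma~\ref{lemma.ribbon}, so the only real care needed is the bookkeeping of the reduction: verifying that nonvanishing of $A_{w_1}$ and $A_{w_2}$ upgrades to the cardinality equalities $|J_i|=n-m$ through the earlier remark, and confirming the orientation of the increasing/decreasing factors at the junction so that the correct half of Lemma~\ref{lemma.ribbon} applies. I expect this matching of hypotheses -- rather than any substantive estimate -- to be the only point where a mistake could creep in; once it is pinned down, the equal-cardinality observation closes the corollary immediately.
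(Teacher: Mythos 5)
Your proof is correct and follows essentially the same route as the paper: the same decomposition $w_i = u_{J_i^c}d_{J_i}$, the same cardinality equalities $|J_i| = n-m$, $|J_i^c| = m$ forced by nonvanishing via Lemma~\ref{maxr}, and the same application of Lemma~\ref{lemma.ribbon} to the junction factor $A_{d_{J_1}}A_{u_{J_2^c}}$ to conclude $J_1 \cap J_2^c = \emptyset$ and hence $J_1 = J_2$. The paper's proof is just a terser version of this argument.
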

\begin{proof}
Since $w_i$ is a $n$-connected ribbon, $w_i= e_{J_i^c}d_{J_i}$ for some $J_i \varsubsetneq \znz$. Since $e_{J_1^c}d_{J_1}e_{J_2^c}d_{J_2}$ is not reduced, $\ell(d_{J_1}e_{J_2^c})=\ell(d_{J_1})+\ell(e_{J_2^c})$. Note that since $A_{w_1}A_{w_2}$ is nonzero in $\am$, $|J_1|=|J_2|=n-m$ and $|J_2^c|=m$. Therefore, by Lemma \ref{lemma.ribbon}, $J_1$ and $J_2^c$ does not intersect and we get $J_1=J_2$ and $w_1=w_2$.
\end{proof}
Let $r_m$ be $u_{[-m,-1]}d_{[0,n-m-1]}$, which is
$$s_{-m}s_{-m+1}\cdots s_{-1} s_{n-m-1}s_{n-m-2}\cdots s_1 s_0.$$
We show that $r_m$ is the unique $n$-connected ribbon which is $0$-Grassmannian and $A_{r_m}$ is nonzero in $\am$. In fact, we show a stronger statement.
\begin{thm} \label{thm.kschurrm} As an element in $\am$, we have
$$ \bs_{r_m}= \sum_v A_v$$
where $v$ runs over all $n$-connected ribbon $u_{J^c}d_J$ for some $J\varsubsetneq \bZ/n\bZ$ with $|J|=n-m$. Therefore,  $r_m$ is the unique $n$-connected ribbon which is $0$-Grassmannian and $A_{r_m}$ is nonzero in $\am$. 
\end{thm}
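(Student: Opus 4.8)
The plan is to prove the displayed identity $\bs_{r_m}=\sum_v A_v$ first, and then deduce the uniqueness statement as an immediate consequence. The first step is to rewrite the right-hand side as a product. With ${\bf \overline{e}}_m=\sum_{|K|=m}A_{u_K}$ and $\bh_{n-m}=\sum_{|L|=n-m}A_{d_L}$, expand
$$ {\bf \overline{e}}_m\,\bh_{n-m} \;=\; \sum_{\substack{|K|=m\\|L|=n-m}} A_{u_K}A_{d_L}. $$
By Lemma \ref{lemma.ribbon}, every summand with $K\cap L\neq\emptyset$ vanishes in $\am$; since $|K|+|L|=n$, the surviving terms are exactly those with $K\sqcup L=\bZ/n\bZ$, i.e. $K=L^c$. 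Relabelling $L=J$ yields ${\bf \overline{e}}_m\bh_{n-m}=\sum_{|J|=n-m}A_{u_{J^c}}A_{d_J}=\sum_v A_v$, the sum over all $n$-connected ribbons $v=u_{J^c}d_J$ with $|J|=n-m$ (terms that are non-reduced simply drop out). Thus the theorem is equivalent to the algebraic identity $\bs_{r_m}={\bf \overline{e}}_m\bh_{n-m}$ in $\am$.

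Next I would identify the two factors as projected noncommutative $k$-Schur functions. The Example following Theorem \ref{thm.kschur} gives $\s_{(n-m)}=\h_{n-m}$, hence $\bs_{(n-m)}=\bh_{n-m}$. For the elementary factor I would use the analogue of the Fomin--Stanley identity ${\bf e}_i=\s_{(1^i)}$, namely that the cyclically increasing sum ${\bf e}_m$ already lies in $\mathbb{B}$ and represents the column noncommutative $k$-Schur function; projecting gives ${\bf \overline{e}}_m=\bs_{(1^m)}$. Consequently ${\bf \overline{e}}_m\bh_{n-m}$ is the image in $\bm$ of the product $\s_{(1^m)}\s_{(n-m)}\in\mathbb{B}$, so in particular it lies in $\bm$.

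Finally I would pin down this product. In $\mathbb{B}$ the $k$-Pieri rule expands $\s_{(1^m)}\s_{(n-m)}$ over the horizontal-strip additions of size $n-m$ to the column $(1^m)$, producing at most $\s_{(n-m,1^m)}$ and $\s_{(n-m+1,1^{m-1})}$, each with coefficient one. Projecting to $\am$, the partition $(n-m,1^m)$ corresponds precisely to $r_m$, so its image is $\bs_{r_m}$; the partition $(n-m+1,1^{m-1})$ has first part $>n-m$, so the associated $0$-Grassmannian element $w$ satisfies $\maxr(w)>n-m$, and its term does not contribute in $\am$ (either it is already absent from the $k$-Pieri expansion, or it projects to zero). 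Hence ${\bf \overline{e}}_m\bh_{n-m}=\bs_{r_m}$, establishing the identity. The uniqueness assertion then follows at once: since $\bs_{r_m}$ is the image of $\s_{r_m}$, whose only $0$-Grassmannian term is $A_{r_m}$ (with coefficient one), the equality $\bs_{r_m}=\sum_v A_v$ forces $r_m$ to be the unique $n$-connected ribbon that is $0$-Grassmannian and survives in $\am$.

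The main obstacle is precisely this bridge from the ribbon product to $\bs_{r_m}$: verifying ${\bf \overline{e}}_m=\bs_{(1^m)}$ (that the cyclically increasing sum genuinely represents the column $k$-Schur function, not merely that it lies in $\bm$), and controlling the $k$-Pieri expansion so that exactly one term survives the projection. A cleaner alternative that sidesteps the Pieri bookkeeping is to prove the uniqueness statement directly --- by a right-descent (or cylindric-shape) analysis showing that $r_m$ is the only $0$-Grassmannian ribbon with $A_v\neq 0$ --- and then recover the identity from the fact that ${\bf \overline{e}}_m\bh_{n-m}\in\bm$ has the single Grassmannian leading term $A_{r_m}$, using the unitriangularity of $\{\bs_u\mid u\in\anmz\}$ against the basis $\{A_u\}$ of $\am$.
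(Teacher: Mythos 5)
Your proposal shares its skeleton with the paper's proof: both expand ${\bf \overline{e}}_m\bh_{n-m}$ via Lemma \ref{lemma.ribbon} to get the ribbon sum (your first paragraph is exactly the paper's second), both reduce the theorem to the identity $\bs_{r_m}={\bf \overline{e}}_m\bh_{n-m}$, and your closing ``cleaner alternative'' is in substance what the paper actually does: it shows that a $0$-Grassmannian $v=u_{J'}d_J$ with $|J|=n-m$ must have $d_J=s_{n-m-1}\cdots s_1s_0$ (every reduced word of $v$ ends in $s_0$), and then Lemma \ref{lemma.ribbon} forces $J'=J^c$, so $A_{r_m}$ is the unique $0$-Grassmannian term surviving in the product. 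Where you genuinely differ is the bridge you build through $\mathbb{B}$, via ${\bf e}_m=\s_{(1^m)}$ and the $k$-Pieri rule, and that is exactly where the gap sits.

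The gap is the disjunction you leave unresolved: ``either $(n-m+1,1^{m-1})$ is absent from the $k$-Pieri expansion, or it projects to zero.'' Neither branch is free. For the projection branch, $\maxr(w')>n-m$ only gives $A_{w'}=0$ in $\am$; it does not give $\bs_{w'}=0$, because $\bs_{w'}$ is the image of the full sum $\s_{w'}=\sum_x c^x_{w'}A_x$ (Theorem \ref{thm.kschur}), and you would need \emph{every} such $x$ to lie outside $\anm$. That statement is precisely the content of Theorem \ref{thm.8}; its proof (via Theorem \ref{thm.cd}) does not rely on the present theorem, so importing it is not circular, but it is a substantive result rather than a consequence of the single vanishing $A_{w'}=0$. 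The branch that is actually true is the first: the Lapointe--Morse $k$-Pieri rule sums over \emph{weak} horizontal strips, i.e. the $k$-conjugates must also differ by a vertical strip, and here the $k$-conjugate of $(1^m)$ is $(m)$ while the $k$-conjugate of $(n-m+1,1^{m-1})$ (a partition of $n=k+1$) is $(m-1,1^{n-m+1})$, which does not even contain $(m)$; for $m=1$ the term is excluded since $(n)$ is not $k$-bounded. Alternatively, entirely within the paper's toolkit: a $0$-Grassmannian $v$ appearing in ${\bf e}_m\h_{n-m}$ must have decreasing factor $d_{[0,n-m-1]}$, so its increasing factor is forced to be $vu_{[0,n-m-1]}$; for $w'$ this equals $u_{[-m+1,-1]}s_{n-m}$, which is not cyclically increasing, so the coefficient of $A_{w'}$ vanishes already in $\mathbb{A}$. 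This last computation is worth carrying out in any case, since it yields ${\bf e}_m\h_{n-m}=\s_{r_m}$ exactly in $\mathbb{B}$, and thereby closes not only your Pieri route but also the lacuna shared by your fallback and by the paper's own terse ``it is enough'' step: reading off only the surviving $0$-Grassmannian coefficients pins down an element of $\bm$ only modulo the images $\bs_u$ with $u\notin\anmz$, which one must separately know to vanish.
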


\begin{proof}
First we show that $\bs_{r_m}= \overline{\bf e}_m\overline{\h}_{n-m}$. It is enough to show that for a $0$-Grassmannian element $v$, $A_v$ appears in $\overline{\bf e}_m\overline{\h}_{n-m}$ if and only if $v=r_m$. Let $v=u_{J'} d_{J}$ for some $J,J' \varsubsetneq \bZ/n\bZ$ with $|J|=n-m, |J'|=m$. Since $v$ is $0$-Grassmannian, $d_J$ has to be $s_{n-m-1}\ldots s_1s_0$. 
By Lemma \ref{lemma.ribbon}, $J'$ has to be $J^c$.

Every (nonzero) element $A_x$ appearing in $\overline{\bf e}_m\overline{\h}_{n-m}$ is of the form $u_{K_1}d_{K_2}$ for some $K_1,K_2 \in \znz$ with $|K_1|=m$ and $|K_2|=n-m$. Since $A_x$ is not zero in $\am$, $K_1=K_2^c$ by Lemma \ref{lemma.ribbon} and we are done.
\end{proof}

\begin{lemma}\label{lemma.wm1}
For $v \in \sn$ such that $v r_m$ is $321$-avoiding and $\ell(v r_m)=\ell(v)+\ell(r_m)$, then $v$ is $0$-Grassmannian.
\end{lemma}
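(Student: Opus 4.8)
The plan is to argue by contradiction on the right descent set of $v$. Recall that $v$ is $0$-Grassmannian exactly when every reduced word of $v$ ends in $s_0$, equivalently when $s_0$ is the only right descent of $v$; so (taking $v\neq e$, the identity being trivial) it suffices to rule out a right descent $s_j$ with $j\in\{1,\dots,n-1\}$. Two structural facts about $r_m$ drive everything. First, $\ell(r_m)=n$, and its reduced word
\[
W=s_{n-m}s_{n-m+1}\cdots s_{n-1}\,s_{n-m-1}s_{n-m-2}\cdots s_1 s_0
\]
uses each of the $n$ simple generators \emph{exactly once}, with the increasing block $s_{n-m}\cdots s_{n-1}$ and the decreasing block $s_{n-m-1}\cdots s_0$ partitioning $\bZ/n\bZ$. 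Second, since $vr_m$ is $321$-avoiding, no reduced word of it contains a factor $s_as_{a+1}s_a$ (equivalently, by the braid relation, $s_{a+1}s_as_{a+1}$).

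Suppose $s_j$ with $j\neq 0$ is a right descent of $v$, and pick a reduced word $R_v$ of $v$ ending in $s_j$. By the hypothesis $\ell(vr_m)=\ell(v)+\ell(r_m)$, the concatenation $R_vW$ is a reduced word of $vr_m$. Now $s_j$ occurs once at the very end of $R_v$ and exactly once inside $W$. If $j=n-m$, then $W$ begins with $s_j$, so $R_vW$ contains $s_js_j$ and is not reduced, contradicting length additivity. Otherwise I would locate the unique copy of $s_j$ in $W$ and slide the trailing $s_j$ of $R_v$ rightward through $W$ toward it, exploiting that $s_j$ commutes with every generator except $s_{j-1}$ and $s_{j+1}$.

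The heart of the argument is that, up to the neighbor that triggers a braid, all letters strictly between the two copies of $s_j$ commute with $s_j$. Concretely: if $s_j$ sits in the increasing block ($j\in\{n-m+1,\dots,n-1\}$), it is immediately preceded in $W$ by $s_{j-1}$ while $s_{j+1}$ occurs only afterward; sliding the trailing $s_j$ rightward past the intervening letters (all of index $\notin\{j-1,j+1\}$) reaches the braid $s_js_{j-1}s_j$. If $s_j$ lies in the decreasing block ($j\in\{1,\dots,n-m-1\}$), then generically $s_{j+1}$ precedes it and $s_{j-1}$ (or $s_0$, when $j=1$) follows, giving $s_js_{j+1}s_j$ after the analogous slide; the boundary value $j=n-m-1$ is the lone exception, where $s_{j+1}=s_{n-m}$ sits just after the first $s_j$ and one instead slides the \emph{second} copy of $s_j$ leftward to it. In every case $R_vW$ then admits a reduced word containing a consecutive braid $s_as_{a\pm1}s_a$, contradicting that $vr_m$ is $321$-avoiding. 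This rules out all $j\neq 0$, so $v$ is $0$-Grassmannian.

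I expect the main obstacle to be exactly this commutation bookkeeping: one must verify uniformly, including the cyclic wrap-around (the interplay of $s_0$ and $s_{n-1}$) and the boundary indices $j=n-m$ and $j=n-m-1$, that the two non-commuting neighbors $s_{j-1},s_{j+1}$ of $s_j$ are positioned so that exactly one of them blocks the slide and becomes the braid partner while the other appears only after both copies of $s_j$. The cleanest organizing principle is that, because $W$ is increasing-then-decreasing with each generator occurring once, the smaller-index and larger-index neighbors of $s_j$ sit on opposite sides of $s_j$ in $W$; this is precisely what forces a braid rather than a cancellation, and makes the single seam case $j=n-m-1$ the only place the pattern flips.
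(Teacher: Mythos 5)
Your proposal is correct and takes essentially the same approach as the paper: both arguments rule out a right descent $s_j$ of $v$ with $j\neq 0$ by a case analysis on where $s_j$ sits relative to the reduced word $u_{[-m,-1]}d_{[0,n-m-1]}$ of $r_m$, using commutation moves to exhibit either a repeated letter (the case $j=n-m$, violating length additivity) or a braid factor $s_a s_{a\pm 1}s_a$ (with $j=n-m-1$ as the exceptional case braiding with $s_{n-m}$), contradicting the $321$-avoiding hypothesis. The only difference is presentational: the paper packages the commutation-sliding into Lemma~\ref{lemma.ij} and Remark~\ref{remark.ij}, whereas you carry it out inline on the concatenated reduced word.
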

\begin{proof}
If $v=v's_i$ for some nonzero $i$ then it suffices to show that $\ell(s_i r_m)<\ell(r_m)$. Assume that we have $\ell(s_i r_m)>\ell(r_m)$. Since $r_m= u_{[-m,-1]} d_{[0,n-m-1]}$, $i$ is not $-m$. If $i$ is in $[-m+1,-1]$ then $s_i u_{[-m,-1]}$ is not $321$-avoiding by Remark \ref{remark.ij}. The last case is when $i$ is in $[1,n-m-1]$. If $i$ is $n-m-1$ then $s_i u_{[-m,-1]} s_{n-m-1}$ is $s_{n-m-1} s_{n-m} s_{n-m-1} u_{[-m+1,-1]}$ so it is not $321$-avoiding. If $i$ is in $[1,n-m-2]$ then $s_i u_{[-m,-1]} d_{[0,n-m-1]}$ is 
$u_{[-m,-1]} s_i d_{[0,n-m-1]}$ 
and $s_i d_{[0,n-m-1]}$ is not $321$-avoiding by Remark \ref{remark.ij}, which is a contradiction.
\end{proof}
\begin{lemma} \label{lemma.wm2}
For $w \in \sn$, assume that for any $i \in \znz$ the generator $s_i$ occurs in some reduced word of $w$. Also assume that $A_w$ is nonzero in $\am$. Then there exist unique $v, u \in  \sn$ such that $w=v u$, $\ell(w)=\ell(v)+\ell(u)$ and for $i \in \znz$, $s_i$ occurs in a reduced word of $u$ exactly once. Similarly, assuming the same condition on $w$, there exist unique $u',v' \in \sn$ such that $w=u'v'$, $\ell(w)=\ell(u')+\ell(v')$ and for $i\in \znz$, $s_i$ occurs in a reduced word of $u'$ exactly once.
\end{lemma}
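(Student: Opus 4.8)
The plan is to prove a single \emph{peeling} statement and read off everything from it: for every $321$-avoiding $w\in\sn$ whose support is all of $\znz$, writing $J:=D(w)$ for the maximal cyclically decreasing right factor from Lemma \ref{denton.lemma}, one has $w=v\,u_{J^c}\,d_J$ with $\ell(w)=\ell(v)+|J^c|+|J|$. Granting this, the two descriptions of the right factor coincide: any element using each generator exactly once is automatically reduced and $321$-avoiding (a word with distinct letters admits no braid or quadratic move) and has full support, so the peeling statement applied to it returns $v=e$ and exhibits it as an $n$-connected ribbon $u_{J^c}d_J$; conversely every $n$-connected ribbon uses each generator once. Thus the first assertion is equivalent to saying that $w$ has a unique $n$-connected-ribbon right factor $u$, with $v=wu^{-1}$.

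\emph{Existence.} I would apply the peeling statement to the given $w$, which is $321$-avoiding because $A_w\neq0$ and has full support by hypothesis. Setting $u:=u_{J^c}d_J$, this is an $n$-connected ribbon with $w=v\,u$ length-additive, so $A_w=A_vA_u$ in $\am$; since $A_w\neq0$ we get $A_u\neq0$, and the $n$-connected-ribbon inequalities derived from Lemma \ref{maxr} (stated just before Corollary \ref{cor.ribbon}) then force $|J|=n-m$ and $|J^c|=m$. Hence $u$ uses each generator exactly once, proving existence; note that $\maxr(w)=n-m$ and $\maxc(w)=m$ fall out for free.

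\emph{Uniqueness.} Let $\tilde u=u_{\tilde J^c}d_{\tilde J}$ be any $n$-connected-ribbon right factor of $w$, say $w=\tilde v\,\tilde u$ length-additively. Its rightmost block $d_{\tilde J}$ is then a cyclically decreasing right factor of $w$, so $\tilde J\subseteq D(w)=J$ by the maximality in Lemma \ref{denton.lemma}; moreover $A_{\tilde u}\neq0$ (it divides $A_w\neq0$), which forces $|\tilde J|=n-m=|J|$, whence $\tilde J=J$ and $\tilde u=u_{J^c}d_J=u$. This determines $u$, hence $v$, so the factorization is unique. The second assertion, with $u'$ a \emph{left} factor, follows verbatim from the left-handed mirror of the peeling statement, in which one strips off the maximal cyclically increasing \emph{left} factor $u_{J^c}$ first and the complementary cyclically decreasing block $d_J$ afterwards; the same ribbon inequalities again pin the sizes to $|J^c|=m$ and $|J|=n-m$ (using that the maximal cyclically increasing left factor has size at most $\maxc(w)\le m$ by the internal-factor description in Lemma \ref{maxr}).

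The main obstacle is the peeling statement itself, namely that after removing $d_{D(w)}$ the surviving occurrences of the colours in $D(w)^c$ assemble, \emph{in cyclically increasing order}, into a genuine right factor $u_{D(w)^c}$ of $v'=wd_J^{-1}$. The content is entirely in the maximality of $D(w)$: if some colour $c\in D(w)^c$ could be slid to the interface and appended to $d_J$ while keeping that block cyclically decreasing, then $D(w)$ would enlarge, contradicting Lemma \ref{denton.lemma}; what must survive at the interface are exactly the colours forced to precede their cyclic successors, i.e. the cyclically increasing block $u_{D(w)^c}$. This is the kind of obstruction governed by the relations of Lemma \ref{lemma.ij}. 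I would make it precise either by a heap/exchange argument, using full commutativity of $w$ to track the topmost occurrence of each colour against the $d_J$-boundary, or algebraically by tracking the coefficient of $A_w$ in a product of the shape $(\,\cdot\,)\,\overline{\bf e}_m\overline{\h}_{n-m}$, exploiting that $\overline{\bf e}_m\overline{\h}_{n-m}=\bs_{r_m}=\sum_{v}A_v$ (the sum over $n$-connected ribbons, Theorem \ref{thm.kschurrm}) lies in the commutative algebra $\bm$, in the same spirit as the proof of Lemma \ref{maxr}. I expect this exchange step to be the only genuinely delicate point.
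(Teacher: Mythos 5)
Your reduction is sound as far as it goes, and your uniqueness argument is in fact the paper's own: there too the right factor is written in the ribbon form $u_{J^c}d_J$, the bounds $\maxr(w)\leq n-m$ and $\maxc(w)\leq m$ coming from Lemma \ref{maxr} force $|J|=n-m$ and $|J^c|=m$, and Denton's result (Corollary \ref{denton.lemma2}) pins down $J$ uniquely. The gap is that you never prove the ``peeling statement,'' and it cannot be set aside as a delicate point to be checked later: it \emph{is} the existence half of Lemma \ref{lemma.wm2}, i.e.\ the entire combinatorial content of the result, and in your write-up the uniqueness half also leans on it (your conversion of an arbitrary right factor using each generator once into an $n$-connected ribbon is obtained by applying peeling to that factor). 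The statement is true --- even without assuming $A_w\neq 0$ in $\am$ --- but proving it takes real work. Your heap sketch, made precise, would need at least: (i) the structural fact that in a reduced word of a $321$-avoiding affine permutation, between two consecutive occurrences of $s_i$ there is at least one $s_{i-1}$ and at least one $s_{i+1}$ (this uses the cyclic structure of $\znz$, not merely full commutativity); (ii) that consequently the rightmost occurrences of the $n$ colours form an order filter of the heap, hence a right factor using each generator exactly once; and (iii) that such a factor has the form $u_{J^c}d_J$ with $J=D(w)$. None of this appears in your text. The algebraic alternative is weaker still: to detect $A_w$ as a coefficient in a product $(\,\cdot\,)\,{\bf \overline{e}}_m\bh_{n-m}$ you must first exhibit a factorization of $w$ with a ribbon right factor, which is exactly what is to be proved; commutativity of $\bm$ does not obviously break this circularity, whereas in the proof of Lemma \ref{maxr} the analogous step works because the factorization $d_Sv_2$ is given as a hypothesis there.

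For comparison, the paper proves existence by a direct exchange argument: among all length-additive factorizations $w=v_1v_2$ in which each generator occurs at most once in $v_2$, choose one with $\ell(v_2)$ maximal; if $\ell(v_2)<n$, pick a maximal interval $[p,q]$ of colours missing from $v_2$, and show (using full support, $321$-avoidance, and commutation moves applied to a well-chosen reduced word of $v_1$) that a final letter of $v_1$ can be transferred to $v_2$, contradicting maximality. That argument is precisely what your proposal lacks; until the peeling statement is proved by it, or by a completed heap argument along the lines of (i)--(iii) above, the proposal is incomplete.
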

\begin{proof}
It is enough to prove the first statement by the symmetry. First we show the existence of $u$ and $v$. Since $A_w$ is nonzero in $\am$, then $\maxr(w)\leq n-m$ and $\maxc(w) \leq m$ by Lemma \ref{maxr}. Assume that $w= v_1 v_2$ for some $v_1,v_2 \in \sn$ satisfying $\ell(w)=\ell(v_1)+\ell(v_2)$ and the generator $s_i$ occurs in a reduced word of $v_2$ at most once. Choose $v_1,v_2$ so that the length of $v_2$ is maximal.

If $\ell(v_2)$ is equal to $n$ then the lemma is proved, so assume that $\ell(v_2)<n$. Choose an interval $[p,q] \subset \znz$ such that for any $i \in [p,q]$, $s_i$ does not occur in a reduced word of $v_2$. Choose a maximal $[p,q]$ so that $s_{p-1}$ and $s_{q+1}$ does appear in a reduced word of $v_2$.

Consider a reduced word $s_{i_1}\cdots s_{i_\ell}$ of $v_1$ so that the maximum of the set $\{ j \mid i_j \in [q+1,p-1] \}$ is the largest and denote the maximum by $j$. If $i_{a}$ is equal to $p$ for some $a>j$, since $v_1v_2$ is $321$-avoiding and a reduced word of $v_2$ contains $s_p$, $s_{p-1}$ should appear in $s_{i_a}\cdots s_{i_\ell}$. However, since $a>j$ it is a contradiction and $i_a$ cannot be equal to $p$ for any $a>j$. By the similar argument, $i_a$ cannot be equal to $q$ for any $a>j$. 

Also note that by the definition of $j$, $i_a$ is not in $[q+1,p-1]$ for all $a>j$. So $i_a$ is in $[p+1,q-1]$ and $s_{i_a}$ commutes with $s_{j}$. If $j$ is not $\ell=\ell(v_1)$, then since $s_{i_1}\cdots s_{i_\ell} = s_{i_1}\cdots s_{i_{j-1}} s_{i_{j+1}} s_{i_j} s_{i_{j+2}} \cdots s_{i_\ell}$, it contradicts with the definition of $j$. Therefore, we have $j=\ell$. Then by setting $w_1=v_1 s_\ell$ and $w_2= s_\ell v_2$, we have $w=w_1w_2$ with $\ell(w)=\ell(w_1)+\ell(w_2)$ and the generator $s_i$ occurs in a reduced word of $w_2$ at most once for $i\in \znz$. Since $\ell(w_2)=\ell(v_2)+1$ it contradicts with the assumption on $v_2$.\\

To prove the uniqueness, first we write $w=vu=v e_{J_c}d_J$ for some $J\subset \znz$. Since $w\in \anm$, $|J|=n-m$ and by Corollary \ref{denton.lemma2}, the set $J$ is uniquely determined. Therefore, $u=e_{J_c}d_J$ is also uniquely determined.
\end{proof}

\begin{lemma}\label{lemma.ribbon2}
For $w\in \anm$ and for any $i\in \znz$ $s_i$ occurs in a reduced word of $w$ exactly once then $w$ is a $n$-connected ribbon.
\end{lemma}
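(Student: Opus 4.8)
The plan is to exploit the fact that the real content of the hypothesis is that $\ell(w)=n$ with each generator $s_i$, $i\in\znz$, occurring exactly once; the $321$-avoiding and $\maxr,\maxc$ conditions play no further role here (indeed $321$-avoidance is automatic, since a repeated letter is needed to form a subword $s_is_{i+1}s_i$). First I would observe that, because no generator is repeated, no braid relation $s_is_{i+1}s_i=s_{i+1}s_is_{i+1}$ can ever be applied to a reduced word of $w$; hence, by Tits' theorem, all reduced words of $w$ are related purely by the commutation moves $s_is_j=s_js_i$ with $i-j\neq\pm1$. In particular $w$ is fully commutative, and for each $i$ the two non-commuting letters $s_i$ and $s_{i+1}$ occur in the same relative order in every reduced word of $w$. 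This lets me define $J:=\{\,i\in\znz : s_{i+1}\text{ precedes }s_i\text{ in }w\,\}$ unambiguously.

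Next I would check that $J$ is a proper nonempty subset, so that both $d_J$ and $u_{J^c}$ are defined. Reading off a single reduced word, ``precedes'' is a strict total order on the $n$ letters; were $J=\znz$ we would obtain the cyclic chain $s_1$ before $s_0$ before $s_{n-1}$ before $\cdots$ before $s_1$, which is impossible, so $J\neq\znz$, and symmetrically $J\neq\emptyset$. Thus $\emptyset\neq J\subsetneq\znz$.

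The core step is to show $w=u_{J^c}d_J$. I would form the word $\omega$ obtained by concatenating the standard reduced word of $u_{J^c}$ with that of $d_J$; it uses each $s_i$, $i\in\znz$, exactly once. Since $w$ is fully commutative, its reduced words are exactly the linear extensions of its heap, the poset on $\{s_0,\dots,s_{n-1}\}$ whose order is generated by the invariant relative orders recorded by $J$. Hence it suffices to verify that $\omega$ respects these relations for every adjacent pair $(i,i+1)$: if $i\in J^c$ I must see $s_i$ before $s_{i+1}$, and this holds because either both lie in the cyclically increasing block $u_{J^c}$, or $s_i$ lies in the first block and $s_{i+1}$ in the second; if $i\in J$ I must see $s_{i+1}$ before $s_i$, which holds because either both lie in the cyclically decreasing block $d_J$, or $s_{i+1}$ lies in the first block and $s_i$ in the second. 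In every case the relative order in $\omega$ matches the one defining $J$, so $\omega$ is a linear extension of the heap of $w$, hence a reduced word for $w$. Therefore $w=u_{J^c}d_J$ is an $n$-connected ribbon.

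The main obstacle is the justification that $\omega$ really represents $w$: this rests on the heap/fully-commutative principle that a word using the correct letters once each and respecting all the invariant relative orders of non-commuting pairs is a reduced word for $w$ (and is in particular reduced, of length $n$). I expect the delicate points to be phrasing this principle cleanly in the cyclic setting and handling the cross-block cases of the verification above, where one of $i,i+1$ lies in $J$ and the other in $J^c$; these are exactly the places where the ``first block entirely before second block'' structure of $u_{J^c}d_J$ is used.
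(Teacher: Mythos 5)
Your proof is correct, and it takes a genuinely different route from the paper's. The paper starts from Denton's maximal cyclically decreasing factorization $w=vd_J$ (Lemma \ref{denton.lemma}), shows that each block of $v$ supported on a gap of $J$ is cyclically increasing by a minimal-counterexample argument that plays off the maximality of $J$, and then invokes Lemma \ref{lemma.ribbon} --- a vanishing statement in the quotient algebra $\am$, so this step genuinely uses $w\in\anm$ --- together with $\ell(w)=n$ to force $v=u_{J^c}$. You instead argue entirely inside the Coxeter group: with no repeated letters no braid move can ever apply, so by Tits' theorem all reduced words form a single commutation class, the relative order of each non-commuting pair $(s_i,s_{i+1})$ is an invariant, the set $J$ is defined intrinsically from these orders, and $w=u_{J^c}d_J$ follows by checking that the concatenated word is a linear extension of the heap; your case analysis of the pairs $(i,i+1)$ across the two blocks is complete, and your argument that $\emptyset\neq J\subsetneq\znz$ via the impossibility of a cyclic chain of precedences is right. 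What your route buys: it is self-contained modulo standard heap/fully-commutative theory, it makes no use of $\am$, of Denton's results, or of the hypotheses $\maxr(w)\leq n-m$ and $\maxc(w)\leq m$, and it derives $321$-avoidance rather than assuming it, so it actually proves the stronger statement that \emph{any} $w\in\sn$ admitting a reduced word using every generator exactly once is an $n$-connected ribbon, with $J$ produced canonically. What the paper's route buys: it stays within the toolkit already developed in the paper (the quotient algebra $\am$, Lemma \ref{lemma.ribbon}, Denton's decomposition), so no external machinery such as heaps, Viennot/Stembridge theory, or Tits' theorem needs to be imported.
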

\begin{proof}
First we write $w= v d_J$ for $v\in \sn$ and $J \subset \znz$ with maximal $J$. Assume that we have an interval $[p,q] \subset \znz$ so that the interval $[p,q]$ and $J$ does not intersect. Choose a maximal interval $[p,q]$ so that $p-1,q+1 \in J$. Since $s_i$ does not appear in $d_J$ for $i \in [p,q]$, it occurs in a reduced word of $v$. Since $s_{p-1}$ and $s_{q+1}$ does not appear in a reduced word of $v$, one can write $v=v_1v_2$ so that $s_i$ occurs in a reduced word of $v_2$ if and only if $i\in [p,q]$. If $v_2$ is not a cyclically increasing element, there exists $i\in [p,q-1]$ so that $s_{i+1}$ precedes $s_i$. Choose a minimal possible $i$, then both $s_{i-1}$ and $s_{i+1}$ precede $s_i$ or $i=p$, so that one can write $v_2= v_3 s_{i}$ with $\ell(v_2)=\ell(v_3)+1$. Since $i$ is in $[p,q-1]$, $s_i d_J$ is cyclically decreasing, it makes a contradiction by the definition of $J$ and $i+1\notin J$. By applying this technique to all possible $[p,q]$ such that $[p,q]$ does not intersect with $J$, we prove that $v$ is a cyclically increasing element. By Lemma \ref{lemma.ribbon} and the fact that $\ell(w)=n$, $v$ is equal to $u_{J^c}$ and $w=u_{J^c}d_J$. Therefore, $w$ is a $n$-connected ribbon.
\end{proof}
\begin{corollary}\label{cor.wm2}
For $w\in \anm$, let $d$ be the minimum of the number of times $s_{i}$ occurs in a reduced word of $w$ for $i \in \znz$. Then we have $w=w^{(0)} v^d$ for $w^{(0)} \in \anm$ such that $s_{i}$ does not occur in a reduced word of $w^{(0)}$ for some $i$ satisfying $\ell(w)=\ell(w^{(0)})+nd$, and $v$ a connected $n$-ribbon. We call the decomposition $w=w^{(0)} v^d$ a \emph{ribbon decomposition}. Moreover, if $w$ is $0$-Grassmannian, then $v$ is equal to $r_m$ and $w^{(0)}$ is also $0$-Grassmannian.
\end{corollary}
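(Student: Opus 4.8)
The plan is to prove the decomposition by induction on $d$, peeling one $n$-connected ribbon off the right at each step, and to force all the ribbons that arise to coincide by means of Corollary \ref{cor.ribbon}. First I would record that $d$ is well defined: since $w$ is $321$-avoiding it is fully commutative, so all of its reduced words are related by commutation moves $s_is_j \leftrightarrow s_js_i$ (no braid move $s_is_{i+1}s_i$ is available), and such moves preserve the multiset of generators used. Hence the number of occurrences of each $s_i$ is an invariant of $w$, and so is its minimum $d$.

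For the base case $d=0$ some generator $s_i$ is absent from $w$, so I take $w^{(0)}=w$ and the statement holds trivially ($v^0=1$, $\ell(w)=\ell(w^{(0)})$). For the inductive step $d\ge 1$, every $s_i$ occurs, and $A_w\neq 0$ in $\am$ because $w\in\anm$; thus Lemma \ref{lemma.wm2} applies and yields a reduced factorization $w=w'v$ in which each generator occurs exactly once in $v$. Since $A_w=A_{w'}A_v\neq 0$ in $\am$, both factors are nonzero in $\am$, hence $w',v\in\anm$. By Lemma \ref{lemma.ribbon2} the factor $v$ is then an $n$-connected ribbon, so $\ell(v)=n$ and $v$ uses every generator once. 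Consequently each $s_i$ occurs one fewer time in $w'$ than in $w$, so the minimal occurrence number of $w'$ is $d-1$, and the inductive hypothesis gives $w'=w^{(0)}(v')^{d-1}$ with $v'$ an $n$-connected ribbon, $w^{(0)}$ missing some generator, and $\ell(w')=\ell(w^{(0)})+n(d-1)$. This produces $w=w^{(0)}(v')^{d-1}v$. The decisive point is now Corollary \ref{cor.ribbon}: for $d\ge 2$ the rightmost $v'$ is adjacent to $v$, with $\ell(v'v)=\ell(v')+\ell(v)$ and $A_{v'v}\neq 0$ in $\am$ (a factor of $A_w$), so $v'=v$; hence $(v')^{d-1}v=v^{d}$ and $w=w^{(0)}v^{d}$ with $\ell(w)=\ell(w^{(0)})+nd$ (the case $d=1$ needs no merging).

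For the ``moreover'' statement, assume $w$ is $0$-Grassmannian. I would first record the general fact that a right factor of a $0$-Grassmannian element is $0$-Grassmannian: if $w=uv$ is reduced and the right descent set $D_R(w)\subseteq\{0\}$, then any right descent $i$ of $v$ satisfies $\ell(ws_i)\le\ell(u)+\ell(vs_i)=\ell(w)-1$, so $i\in D_R(w)=\{0\}$, whence $D_R(v)\subseteq\{0\}$ and $v\in\sn^0$. Applying this to the rightmost ribbon shows that $v$ is a $0$-Grassmannian $n$-connected ribbon with $A_v\neq 0$ in $\am$, so $v=r_m$ by Theorem \ref{thm.kschurrm}; since all the ribbons are equal, $w=w^{(0)}r_m^{d}$. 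Finally $w^{(0)}r_m$ is a reduced left factor of the $321$-avoiding $w$, hence $321$-avoiding with $\ell(w^{(0)}r_m)=\ell(w^{(0)})+\ell(r_m)$, so Lemma \ref{lemma.wm1} gives that $w^{(0)}$ is $0$-Grassmannian.

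The main obstacle will be organizing the induction so that the ribbons extracted at successive steps are forced to agree. Lemma \ref{lemma.wm2} only guarantees that each peeled-off factor is a ribbon, and nothing a priori prevents different ribbons from appearing; it is precisely Corollary \ref{cor.ribbon}, the uniqueness of adjacent reduced products of $n$-connected ribbons in $\am$, that collapses them into a single $v$ and turns the product into $v^{d}$. The remaining care is bookkeeping: verifying that the minimal occurrence number drops by exactly one at each step, and that $321$-avoidance and nonvanishing in $\am$ (equivalently, membership in $\anm$) are inherited by both the left and the right factors throughout the recursion.
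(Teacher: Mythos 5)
Your proof is correct and takes essentially the same route as the paper: recursively peel off a factor using Lemma \ref{lemma.wm2}, identify it as an $n$-connected ribbon via Lemma \ref{lemma.ribbon2}, force all ribbons to coincide with Corollary \ref{cor.ribbon}, and settle the $0$-Grassmannian case through Theorem \ref{thm.kschurrm} and Lemma \ref{lemma.wm1}. The only difference is that you make explicit several steps the paper leaves implicit, such as the invariance of the multiset of generators (well-definedness of $d$) and the fact that a right factor of a reduced factorization of a $0$-Grassmannian element is again $0$-Grassmannian.
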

\begin{proof}
First note that the number of $s_i$ that appears in a reduced word of $w$ for given $i$ is independent on a reduced word, since $w$ is 321-avoiding. By recursively applying Lemma \ref{lemma.wm2}, we have a decomposition $w=w^{(0)} v^d$ for some connected $n$-ribbon by Corollary \ref{cor.ribbon}. If $w$ is $0$-Grassmannian, then $v$ is also $0$-Grassmannian and hence equal to $r_m$. By Lemma \ref{lemma.wm1}, $w^{(0)}$ is also $0$-Grassmannian

\end{proof}

\begin{lemma}\label{lemma.d}
For an element $w$ in $\anmz$ and a ribbon decomposition $w=w^{(0)}v^d$, $s_{n-m}$ does not appear in a reduced word of $w^{(0)}$. Therefore, $d$ is the number of $s_{n-m}$ appears in a reduced word of $w$.
\end{lemma}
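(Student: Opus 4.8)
The plan is to reduce the statement to a single counting inequality and then exploit a content‑reflection symmetry. Since $w=w^{(0)}r_m^d$ with $\ell(w)=\ell(w^{(0)})+nd$ and $r_m$ uses every simple generator exactly once, the number of occurrences of $s_t$ in $w$ equals the number in $w^{(0)}$ plus $d$, for each $t\in\znz$. By Corollary \ref{cor.wm2} the integer $d$ is the minimum over $t$ of the occurrence count of $s_t$ in $w$. Hence $s_{n-m}$ is absent from $w^{(0)}$ if and only if $s_{n-m}$ occurs exactly $d$ times in $w$, i.e. if and only if $s_{n-m}$ realizes this minimum. So it suffices to prove: for every $w\in\anmz$ with $k$-bounded partition $\lambda$, the residue $n-m$ minimizes the occurrence count. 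The final sentence of the lemma is then immediate, since $r_m$ contributes exactly one $s_{n-m}$ per factor.

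First I would translate occurrence counts into diagonal data. By Denton's description quoted before this section, the maximal cyclically decreasing decomposition of $w$ is $w=d_{J_p}\cdots d_{J_1}$ with $J_i=[1-i,\lambda_i-i]$, and $J_i$ is exactly the set of residues modulo $n$ of the contents $j-i$ of the cells in the $i$-th row of $\lambda$; in particular $\maxr(w)=\lambda_1\le n-m$. Therefore the number of occurrences of $s_t$ in $w$ equals $c_t:=\#\{\text{cells of }\lambda\text{ of content}\equiv t \pmod n\}$. Writing $N_c$ for the number of cells of content exactly $c$, a standard row-shift argument shows $N_c$ is nondecreasing in $c$ for $c\le 0$ (this is the only monotonicity I need), while $\lambda_1\le n-m$ forces $N_c=0$ for $c\ge n-m$. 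Consequently $c_t=\sum_{j\ge 0}N_{\tau(t)-jn}$, where $\tau(t)$ is the largest content $\equiv t\pmod n$ not exceeding $n-m-1$; the values $\tau(t)$ run over the $n$ consecutive integers $[-m,n-m-1]$, with $\tau(n-m)=-m$ the smallest. A termwise comparison $N_{\tau(t)-jn}\ge N_{-m-jn}$, valid because $\tau(t)-jn\ge -m-jn$ and both are $\le 0$, then yields $c_{n-m}\le c_t$ for every residue with $\tau(t)\le 0$, that is for $t\in\{0\}\cup\{n-m,\dots,n-1\}$.

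The remaining residues $t\in\{1,\dots,n-m-1\}$, where $\tau(t)>0$, are precisely where $\maxr(w)\le n-m$ alone fails, and feeding in $\maxc(w)\le m$ is the main obstacle. I would resolve it with the automorphism $\phi\colon\sn\to\sn$, $\phi(s_i)=s_{-i}$: it preserves the braid relations, preserves $321$-avoidance, fixes $s_0$ and hence preserves being $0$-Grassmannian, sends $d_J\mapsto u_{-J}$, and so swaps $\maxr$ and $\maxc$ while reflecting counts, $c_t(\phi(w))=c_{-t}(w)$. Thus $w':=\phi(w)\in\sn^0$ is $321$-avoiding with $\maxr(w')=\maxc(w)\le m$, so its partition obeys the same bound with $m$ in place of $n-m$. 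Running the previous paragraph verbatim for $w'$ shows the residue $m$ minimizes $c_\bullet(w')$ among all $s$ with top-content $\le 0$, namely $s\in\{0\}\cup\{m,\dots,n-1\}$. Reflecting back via $c_s(w')=c_{n-s}(w)$ and $c_m(w')=c_{n-m}(w)$ gives $c_{n-m}(w)\le c_t(w)$ for all $t\in\{0,1,\dots,n-m\}$. Since $\{0\}\cup\{n-m,\dots,n-1\}$ and $\{0,1,\dots,n-m\}$ cover $\znz$, we conclude $c_{n-m}(w)=\min_t c_t(w)=d$, as required. The only delicate points are the monotonicity of $N_c$ on nonpositive contents and the verification that $\phi$ has all the stated compatibilities; both are short, and the role-swap provided by $\phi$ is exactly what packages the awkward hypothesis $\maxc(w)\le m$ into the same unimodality estimate already used for $\maxr(w)\le n-m$.
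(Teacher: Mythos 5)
Your proof is correct, and it takes a genuinely different route from the paper's. The paper argues by contradiction on $w^{(0)}$: if $s_{n-m}$ occurred in a reduced word of $w^{(0)}$, then, since $w^{(0)}$ must omit some generator by the definition of a ribbon decomposition, one of $\maxc(w^{(0)})<m$ or $\maxr(w^{(0)})<n-m$ must hold (were both equalities, the $0$-Grassmannian condition would force right factors $u_{[-m+1,0]}$ and $d_{[0,n-m-1]}$, making every $s_i$ with $i\neq n-m$, hence every generator, occur); then, taking $\maxc(w^{(0)})<m$ without loss of generality, the paper applies Denton's intervals $J_j=[-j+1,\lambda_j-j]$ to the maximal decomposition of $w^{(0)}$ itself and notes that, as the number of parts satisfies $p<m$ and $\lambda_1\leq n-m$, every interval lies in $[-m+1,n-m-1]$ and so misses the residue $-m\equiv n-m$, a contradiction. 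You never analyze $w^{(0)}$ at all: using Corollary \ref{cor.wm2} you reformulate the lemma as the assertion that the residue $n-m$ attains the minimum occurrence count in $w$, then apply Denton's intervals to $w$, identify occurrence counts with content counts of $\lambda(w)$, and prove the minimality via unimodality of $N_c$ on nonpositive contents together with the reflection automorphism $s_i\mapsto s_{-i}$. Your route buys two things. First, the reflection makes explicit the $m\leftrightarrow n-m$, $\maxr\leftrightarrow\maxc$ symmetry on which the paper's ``without loss of generality'' tacitly relies. Second, and more substantively, you avoid the paper's step ``$\maxc(w^{(0)})<m$ implies $p<m$,'' which is asserted without justification and is not an instance of a general inequality: the number of parts of $\lambda(v)$ can exceed $\maxc(v)$ (for $v=s_2s_1s_0\in\widetilde{S}_3$ one has $\lambda(v)=(2,1)$ with two parts but $\maxc(v)=1$), so that step genuinely needs the hypothesis $\lambda_1\leq n-m$ and a commutation argument the paper omits; your argument only ever reads off first parts via $\maxr=\lambda_1$, which is immediate from the Denton results quoted at the end of Section 3. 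What the paper's route buys in exchange is brevity: granting that step, it needs no counting formula and no unimodality estimate, only the structural facts already established in Lemma \ref{lemma.wm2} and Corollary \ref{cor.wm2}.
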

\begin{proof} Assume that $s_{n-m}$ appears in a reduced word of $w^{(0)}$. We first claim that either $\maxc(w^{(0)})<m$ or $\maxr(w^{(0)})<n-m$. If $\maxc(w^{(0)})=m$, then $w^{(0)}$ can be written as $w_1u_J$ where $w_1\in \anm$ and $J$ is a proper subset of $\znz$ satisfying $\ell(w^{(0)})=\ell(w_1)+\ell(u_J)$. Since $w^{(0)}$ is $0$-Grassmannian, $J$ must be an interval ${[-m+1,0]}$. On the other hand, if $\maxr(w^{(0)})=n-m$, $w^{(0)}$ can be written as $w_2d_{[0,n-m-1]}$ for $w_2\in \anm$ satisfying $\ell(w^{(0)})=\ell(w_2)+n-m$. Since for any $i \in\znz$ which is not $n-m$, $i$ appears either in $d_{[-m+1,0]}$ or $d_{[0,n-m-1]}$ it makes a contradiction. Therefore, without lose of generality, we may assume that $\maxc(w^{(0)})<m$.
Let $w=d_{J_p}\cdots d_{J_1}$ be the unique maximal decomposition into cyclically decreasing elements. Since $w$ is $0$-Grassmannian, $J_j$ must be $[-j+1,\lambda_j-j]$ for any $1 \leq j \leq p$ for some integer sequences $n-m\geq \lambda_1\geq \lambda_2\geq \ldots \geq \lambda_p\geq 0$, which we mentioned the proof at the end of Section 3 except that now $\lambda_1$ is less than equal to $n-m$. Since $\maxr(w^{(0)})<m$, it follows that $p<m$ and any interval $[-j+1,\lambda_j-j]$ does not contain $n-m$ since $-m<-p+1\leq -j+1$ and $\lambda_j-j\leq \lambda_1-1 \leq n-m-1$. This makes a contradiction to our assumption. \end{proof}
\begin{thm}\label{thm.bij}
There is a bijection $\phi$ between the set $\anmz$ and the set of the cylindric shapes $\lambda/d/\emptyset$ for $d\geq0$. Moreover, for $w,v \in \anmz$ $D_{\phi(v)}$ contains $D_{\phi(w)}$ if and only if $w< v$ where $<$ is the weak order in $\sn$. Also, the affine Schur function $F_w$ is the same as the cylindric Schur functions $s_{\phi(w)}$.
\end{thm}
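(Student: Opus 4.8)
My plan is to realize $\phi$ directly through the action of $\am$ on $C_{mn}$ developed just before Lemma~\ref{lemma.ai}, setting $\phi(w):=A_w\cdot(\emptyset/0/\emptyset)$. Since each $A_i$ only enlarges the outer boundary $\lambda[r]$ and never moves the inner boundary $\mu[s]$, the image automatically has the form $\lambda/d/\emptyset$. To see that $\phi$ is well defined, i.e.\ that $A_w\cdot(\emptyset/0/\emptyset)\neq 0$, I would use the maximal cyclically decreasing decomposition $w=d_{J_p}\cdots d_{J_1}$ of Corollary~\ref{denton.lemma2}, in which each $J_j=[-j+1,\lambda_j-j]$ is an interval. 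By Lemma~\ref{lemma.ai}(4) each factor $A_{d_{J_j}}$ adds a horizontal strip, and since the intervals $J_j$ are nested these strips stack into a genuine cylindric diagram; an induction on $p$ shows the action is nonzero and yields a shape $\lambda/d/\emptyset$ with $\lambda\in P_{mn}$, the bounds $\lambda_1\le n-m$ and $\ell(\lambda)\le m$ coming from $\maxr(w)\le n-m$ and $\maxc(w)\le m$.

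For bijectivity I would construct the inverse by reading the boundary: a cylindric shape $\lambda/d/\emptyset$ determines the increasing $(m,n)$-periodic sequence $\lambda[d]$, and recording the sequence of diagonals crossed by the boundary path of $D_{\lambda[d]}$ yields a reduced word for a $0$-Grassmannian affine permutation. The absence of a $321$-pattern is forced because an honest ordered-ideal boundary never requires a subword $s_is_{i+1}s_i$, while the width and height of the diagram give $\maxr\le n-m$ and $\maxc\le m$, so this element lies in $\anmz$; checking that the two constructions are mutually inverse gives the bijection. Alternatively, I can factor through the ribbon decomposition $w=w^{(0)}r_m^d$ of Corollary~\ref{cor.wm2} and Lemma~\ref{lemma.d}, matching $w^{(0)}$ with a partition $\lambda\in P_{mn}$ and reading off $d$ as the number of occurrences of $s_{n-m}$, which by Theorem~\ref{thm.kschurrm} corresponds to $d$ applications of the degree-raising ribbon $r_m$.

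The order statement is the cleanest part. Because $\phi(s_iw)=A_i\cdot\phi(w)$ whenever $\ell(s_iw)=\ell(w)+1$ and $s_iw\in\anmz$, a cover $w\lessdot v=s_iw$ in the weak order adjoins exactly one box to $D_{\phi(w)}$. Running along a saturated chain from $w$ up to $v$ thus gives $D_{\phi(w)}\subseteq D_{\phi(v)}$; conversely, any box addable to $\lambda/d/\emptyset$ is an outer corner whose addition realizes a weak-order cover, so a containment $D_{\phi(w)}\subseteq D_{\phi(v)}$ can be filled by single-box steps, yielding $w\le v$. Since $\phi$ is a bijection, the containment poset and the weak order are isomorphic.

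Finally, for $F_w=s_{\phi(w)}$ I would set up a weight-matching bijection between cyclically decreasing decompositions of $w$ and cylindric semistandard tableaux of shape $\phi(w)$. Given such a tableau $T$, the subshapes $\tau^{(j)}$ spanned by the entries $\le j$ form a chain whose successive differences are horizontal strips; by Lemma~\ref{lemma.ai}(4) each strip is adjoined by a cyclically decreasing element $d_{J_j}$, and reading these off from the largest entry inward produces a cyclically decreasing decomposition of $w$ with $\ell(d_{J_j})$ equal to the number of $j$'s in $T$. This identifies the monomials of $F_w$ and of $s_{\phi(w)}$ up to reversing the role of the variables, and since both are symmetric functions the generating functions coincide. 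I expect the technical heart of the argument to be exactly this correspondence: one must verify that adjoining a horizontal strip to a cylindric diagram is effected by a \emph{unique} cyclically decreasing element and that every cyclically decreasing decomposition of $w$ arises in this way, so that the horizontal-strip chains for $\phi(w)$ are in exact bijection with the decompositions counted by $F_w$. The nonvanishing of the action (well-definedness of $\phi$) and the surjectivity onto all shapes $\lambda/d/\emptyset$ are the remaining points that will require genuine care.
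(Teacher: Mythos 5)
Your proposal is correct, and for two of the theorem's three claims it is essentially the paper's own argument: the definition $\phi(w)=A_w\cdot(\emptyset/0/\emptyset)$, the cover-by-cover treatment of the weak-order statement, and the identification of cyclically decreasing factors with horizontal strips (whose uniqueness you rightly single out as the technical heart of $F_w=s_{\phi(w)}$) all coincide with what the paper does. Where you genuinely diverge is bijectivity. The paper never constructs an inverse from the diagram; it splits off the ribbon part $w=w^{(0)}r_m^d$ (your fallback route, via Corollary \ref{cor.wm2} and Lemma \ref{lemma.d}), and for $d=0$ observes that $\phi=\psi\circ\phi'$, where $\phi'$ is the classical bijection between $P_{mn}$ and the permutations of $S_n$ with unique descent at $m$, and $\psi$ is the shift $s_i\mapsto s_{i-m}$; the case $d>0$ then follows from uniqueness of the ribbon decomposition together with $\phi(w)=\phi(w^{(0)})/d/\emptyset$. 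This reduction inherits injectivity and surjectivity from the finite classical case, whereas your primary route (reading the boundary of $D_{\lambda[d]}$) must verify by hand that the word read off is reduced, $0$-Grassmannian, $321$-avoiding, and satisfies the $\maxr$/$\maxc$ bounds; this is doable, since $321$-avoiding elements are fully commutative and hence determined by their heap, which here is the diagonal-labelled skew diagram, but it is exactly the part you defer as requiring ``genuine care.'' Your nonvanishing argument via the maximal decomposition $d_{J_p}\cdots d_{J_1}$ with $J_j=[-j+1,\lambda_j-j]$ is sound (though the intervals are staircase-shifted rather than nested: each factor contributes the next row of the shape), and it is in fact more explicit than anything the paper writes down for well-definedness of the action.
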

\begin{proof}
For an element $w \in \anmz$, we define $\phi(w)$ by $A_w \cdot \emptyset/0/\emptyset$ and we will show that $\phi$ is a bijection. Assume that we have a ribbon decomposition $w = w^{(0)} r_m^d$ for some $w^{(0)} \in \anmz$.
When $d=0$, then the bijectivity of $\phi$ comes from the well-known bijection $\phi'$ between $P_{mn}$ and the subset of $S_n$ with unique descent at $m$. Indeed, $\phi$ is $ \psi \circ \phi'$ where $\psi :S_n \rightarrow \sn$ is defined by sending $s_i$ to $s_{i-m}$. Note that the image of $\psi$ does not contain an element whose reduced word contains $s_{n-m}$.\\

 If $d>0$, we have $w = w^{(0)} r_m^d$ for some $w^{(0)} \in \anmz$ such that $s_{n-m}$ is not used in any reduced word of $w^{(0)}$ and $\ell(w)=\ell(w^{(0)})+nd$. Note that the decomposition $w=w^{(0)} r_m^d$ is unique. Then $\phi(w)$ is $\phi(w^{(0)})/d/\emptyset$ and $\phi$ becomes the bijection between the set $\anmz$ and the set of cylindric shapes $\lambda/d/\emptyset$ for $\lambda\in P_{mn}$ and $d\geq 0$. \\

Note that by the construction of the bijection $\phi$, $s_iw=v$ for some $w \lessdot v$ if and only if $A_i \cdot \phi(w)= A_i A_w \cdot (\emptyset/0/\emptyset)=\phi(v)$. Note that $A_i \cdot \phi(w) = \phi(v)$ if and only if $D_{\phi(v)}$ contains $D_{\phi(w)}$ and the difference $D_{\phi(v)}/D_{\phi(w)}$ consists of a box at $i$-th diagonal. This proves the second statement.\\

For the last statement, recall that for $v \in \sn$ such that $A_v$ is nonzero and cylindric diagrams $D,D'$ such that $A_v\cdot D= D'$, $D'/D$ is a horizontal strip if and only if $v$ is cyclically decreasing. Therefore, a cyclically decreasing decomposition of $w$ corresponds to a semi-standard tableau on a cylindric diagram $D_{\phi(w)}$ with the same weight, so $F_w$ is equal to the cylindric Schur functions $s_{\phi(w)}$.
\end{proof}

\begin{corollary}\label{cor.skew}
For a cylindric shape $\lambda/d/\mu$ of type $(m,n)$, a cylindric skew Schur function $s_{\lambda/d/\mu}$ is the same as $F_w$ where $w =\phi^{-1}(\lambda/d/\emptyset )\left( \phi^{-1}(\mu/0/\emptyset)\right)^{-1} \in\anm$.
\end{corollary}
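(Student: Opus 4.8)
The plan is to reduce the skew case to the straight-shape correspondence already established in Theorem~\ref{thm.bij}, running the very same nilCoxeter/tableau argument but starting the $\mathbb{A}$-action from the inner shape $D_{\mu[0]}$ rather than from the empty shape. Write $w_\lambda := \phi^{-1}(\lambda/d/\emptyset)$ and $w_\mu := \phi^{-1}(\mu/0/\emptyset)$, both elements of $\anmz$, so that $w = w_\lambda w_\mu^{-1}$. The first thing I would verify is that this $w$ is a well-defined element of $\anm$ that factors length-additively through $w_\mu$. Since $\lambda/d/\mu = \lambda[d]/\mu[0]$ is a cylindric shape we have $\mu[0]_i \le \lambda[d]_i$ for all $i$, equivalently $D_{\mu[0]} = D_{\phi(w_\mu)} \subseteq D_{\phi(w_\lambda)} = D_{\lambda[d]}$. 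By the monotonicity statement in Theorem~\ref{thm.bij}, this inclusion is equivalent to $w_\mu \le w_\lambda$ in the weak order on $\sn$; hence $w_\lambda = u\,w_\mu$ with $\ell(w_\lambda) = \ell(u) + \ell(w_\mu)$ for $u = w_\lambda w_\mu^{-1} = w$. In particular $A_w A_{w_\mu} = A_{w_\lambda}$ in $\mathbb{A}$, and therefore also in $\am$. Because $w_\lambda \in \anmz$ the basis element $A_{w_\lambda}$ is nonzero in $\am$, which forces $A_w \neq 0$ in $\am$; by the basis description of $\am$ this places $w \in \anm$, as the corollary asserts.

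The heart of the proof is then a weight-preserving bijection between cyclically decreasing decompositions of $w$ and cylindric semistandard tableaux of shape $\lambda/d/\mu$. Given a cyclically decreasing decomposition $w = w_1 \cdots w_\ell$ (so the $\ell(w_i)$ sum to $\ell(w)$), the product $A_{w_1}\cdots A_{w_\ell} = A_w$ acts on $D_{\mu[0]} = A_{w_\mu}\cdot(\emptyset/0/\emptyset)$ to give $A_w A_{w_\mu}\cdot(\emptyset/0/\emptyset) = A_{w_\lambda}\cdot(\emptyset/0/\emptyset) = D_{\lambda[d]}$. Since this full product is nonzero and each generator either enlarges a shape or annihilates it, every right-partial product $A_{w_j}\cdots A_{w_\ell}\cdot D_{\mu[0]}$ is a genuine cylindric shape; moreover the action only extends the outer shape while holding the inner shape $\mu[0]$ fixed, so we obtain a chain of cylindric shapes with common inner boundary $\mu[0]$ filling out to $\lambda[d]$. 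By the horizontal-strip property established in the proof of Lemma~\ref{lemma.ai}, each step associated with the cyclically decreasing factor $w_j$ adds a horizontal strip, and labelling the boxes it contributes by $j$ produces a cylindric semistandard tableau $T$ of shape $\lambda/d/\mu$ with weight $w(T) = (\ell(w_1), \ldots, \ell(w_\ell))$. Conversely, slicing any cylindric semistandard tableau of shape $\lambda/d/\mu$ into its horizontal strips by entry value recovers a cyclically decreasing decomposition of $w$ with matching weight. This is exactly the correspondence used at the end of the proof of Theorem~\ref{thm.bij}, with $\emptyset/0/\emptyset$ replaced by $D_{\mu[0]}$, so summing $\x^{w(T)}$ over both sides yields $F_w = s_{\lambda/d/\mu}$.

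I expect the main obstacle to be the first step, namely the length-additive factorization $w_\lambda = w\,w_\mu$: one must be sure that the inclusion of inner shapes genuinely lifts to the comparison $w_\mu \le w_\lambda$ in weak order \emph{with additive lengths}, rather than merely to some unrelated reduced expression. This additivity is precisely what guarantees $A_w A_{w_\mu} = A_{w_\lambda}$ and hence that acting by $A_w$ on $D_{\mu[0]}$ sweeps out the skew region $D_{\lambda/d/\mu}$ exactly, never vanishing along the way and never disturbing the inner boundary. Once this factorization is in hand, the tableau bijection of the second paragraph is a direct transcription of the straight-shape argument, and no further delicate combinatorics is required.
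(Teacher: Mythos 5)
Your proof is correct and takes essentially the same route as the paper: both deduce the length-additive factorization $\phi^{-1}(\lambda/d/\emptyset) = w\,\phi^{-1}(\mu/0/\emptyset)$ from the containment/weak-order statement of Theorem \ref{thm.bij}, and both conclude by matching cyclically decreasing decompositions of $w$ with cylindric semistandard tableaux of shape $\lambda/d/\mu$ via the horizontal-strip characterization. The only cosmetic difference is that you certify $w \in \anm$ algebraically (from $A_w A_{\phi^{-1}(\mu/0/\emptyset)} = A_{\phi^{-1}(\lambda/d/\emptyset)} \neq 0$ in $\am$), whereas the paper does so via the nonvanishing of the action $A_w \cdot (\mu/0/\emptyset)$.
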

\begin{proof}
Since $\lambda/d/\mu$ is a cylindric shape of type $(m,n)$, $D_{\lambda[d]}$ contains $D_{\mu[0]}$. By Theorem \ref{thm.bij}, there is $w \in \sn$ such that $A_w \cdot (\mu/0/\emptyset)= \lambda/d/\emptyset$ and we have $w\phi^{-1}(\mu/0/\emptyset) = \phi^{-1} (\lambda/d/\emptyset)$ such that $\ell(w)= |D_{\lambda[d]}/D_{\mu}|=|D_{\lambda/d/\mu}|$. Since $A_w \cdot (\mu/0/\emptyset)$ is not zero, $w$ is in $\anm$. Also since $A_w \cdot (\mu/0/\emptyset)= \lambda/d/\emptyset$, we have $w =\phi^{-1}(\lambda/d/\mu )\left( \phi^{-1}(\mu/0/\emptyset)\right)^{-1}$.\\

Recall that for $v \in \anm$ and cylindric diagrams $D,D'$ such that $A_v\cdot D= D'$, $D'/D$ is a horizontal strip if and only if $v$ is cyclically decreasing. Therefore, a cyclically decreasing decomposition of $w$ corresponds to a semi-standard tableau on a cylindric diagram $D_{\lambda/d/\mu}$ with the same weight, so $F_w$ is equal to the cylindric Schur functions $s_{\lambda/d/\mu}$.

\end{proof}
\begin{proposition}\label{prop.decomp}
For $w\in \anmz$ and for a ribbon decomposition $w=w^{(0)} r_m^d$, we have
$$\bs_{w}= \bs_{w^{(0)}} (\bs_{r_m})^d$$
in $\bm$.
\end{proposition}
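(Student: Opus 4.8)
The plan is to induct on $d$ and reduce everything to a single multiplication by $r_m$. Since $\bm$ is commutative, it suffices to prove the following: if $w\in\anmz$ has ribbon decomposition $w=w^{(0)}r_m^d$ with $d\ge 1$ and we set $w'=w^{(0)}r_m^{d-1}$, then $\bs_w=\bs_{w'}\bs_{r_m}$; the proposition then follows because $\bs_{w'}=\bs_{w^{(0)}}(\bs_{r_m})^{d-1}$ by the inductive hypothesis. First I would check that $w'$ is a legitimate index: from $A_{w'}A_{r_m}=A_w\neq 0$ in $\am$ we get $w'\in\anm$, from Lemma \ref{lemma.wm1} applied to $w=w'r_m$ we get $w'\in\sn^0$, and by uniqueness of the ribbon decomposition (Corollary \ref{cor.wm2}) the decomposition of $w'$ is exactly $w^{(0)}r_m^{d-1}$.

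The technical heart is to compute the $0$-Grassmannian part of $\bs_{w'}\bs_{r_m}$ and show it equals $A_w$. Lifting to the affine Fomin--Stanley algebra, write $\s_{w'}\s_{r_m}=\sum_{\alpha\in\sn^0}d^\alpha_{w',r_m}\s_\alpha$; since each $\s_\alpha$ has $A_\alpha$ as its unique $0$-Grassmannian term, projecting to $\am$ shows that the $0$-Grassmannian part of $\bs_{w'}\bs_{r_m}$ equals $\sum_{\alpha\in\anmz}d^\alpha_{w',r_m}A_\alpha$. By Theorem \ref{thm.cd}, $d^\alpha_{w',r_m}=c^{u'}_{w'}$ whenever $\alpha=u'r_m$ is a reduced factorization, and vanishes otherwise. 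Now comes the key collapse: if $\alpha\in\anmz$ admits such a factorization then $\alpha$ is $321$-avoiding, so Lemma \ref{lemma.wm1} forces $u'\in\sn^0$; but for a $0$-Grassmannian $u'$ the coefficient $c^{u'}_{w'}$ is the coefficient of the affine Schur function $F_{w'}$ in $F_{u'}$, which equals $\delta_{u',w'}$ since the affine Schur functions form a basis. Hence the only surviving term has $u'=w'$, i.e. $\alpha=w'r_m=w$, with coefficient $c^{w'}_{w'}=1$; and $\alpha\in\anmz$ can factor through $r_m$ only when its ribbon depth is positive, so no spurious $\alpha$ arise. This shows $\bs_{w'}\bs_{r_m}$ has $A_w$ as its unique $0$-Grassmannian term.

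To finish, I would invoke that an element of $\bm$ is determined by its $0$-Grassmannian part, i.e. that $\{\bs_v\mid v\in\anmz\}$ is a basis of $\bm$: since $\bs_w\in\bm$ also has $A_w$ as its unique $0$-Grassmannian term, the difference $\bs_w-\bs_{w'}\bs_{r_m}\in\bm$ has vanishing $0$-Grassmannian part and must therefore be zero, giving $\bs_w=\bs_{w'}\bs_{r_m}$.

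The main obstacle is precisely this last step. Unlike $\mathbb{B}$, where Lam's theorem provides a basis $\{\s_v\}$ indexed by $\sn^0$ with distinct leading terms, in the quotient $\bm$ one must rule out that some $\bs_u$ with $u\in\sn^0\setminus\anmz$ survives and contributes hidden mass with no $0$-Grassmannian term; concretely the spanning half of the basis statement, that every $\bs_u$ lies in the span of $\{\bs_v\mid v\in\anmz\}$, is the delicate input. I expect to secure it either from faithfulness of the $\am$-action on cylindric shapes, using the bijection $\phi$ of Theorem \ref{thm.bij} and Corollary \ref{cor.skew}, or by a Hilbert-series argument matching the graded dimension of $\bm$ with $|\anmz|$ in each degree; once this is available, the leading-term computation above closes the proof. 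It is worth stressing that the identity genuinely requires the quotient $\bm$: because $r_m$ is not a $k$-rectangle (its $k$-bounded partition has size $n$), the analogous factorization fails in $\mathbb{B}$ itself, where non-$321$-avoiding $\alpha$ would contribute extra leading terms.
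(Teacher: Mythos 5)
Your leading-term computation is correct and coincides in substance with the paper's own proof. The paper argues directly inside $\am$: if $A_v$ occurs in $\bs_{w^{(0)}}(\bs_{r_m})^d$ for some $v\in\sn^0$, then $v=v_1v_2$ reduced with $A_{v_2}$ occurring in $\bs_{r_m}$, whence $v_2=r_m$ by Theorem \ref{thm.kschurrm}, $v_1\in\sn^0$ by Lemma \ref{lemma.wm1}, and $v_1=w^{(0)}r_m^{d-1}$ by induction on $d$. Your detour through the structure constants $d^{\alpha}_{w',r_m}$, Theorem \ref{thm.cd}, and the identity $c^{u'}_{w'}=\delta_{u',w'}$ for $u'\in\sn^0$ is an equivalent repackaging of the same three ingredients, and your preliminary checks on $w'$ are fine.

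The obstacle you single out at the end is genuine, and you are right that it is exactly what separates the leading-term computation from the proposition: one must know that an element of $\bm$ with vanishing $0$-Grassmannian part is zero, equivalently that $\bs_\alpha=0$ for $\alpha\in\sn^0\setminus\anmz$. (To be fair, the paper's proof is silent on this point too; its opening claim ``it is enough to show...'' is precisely this assertion.) However, neither of your two proposed routes (faithfulness of the action on cylindric shapes, or a Hilbert-series count) is what the paper uses, and no new principle is needed: the required fact is Theorem \ref{thm.8}, and its proof is one more application of the collapse you already deploy. Concretely, if $c^u_\alpha\neq 0$ with $u\in\anm$, complete $u$ to a reduced product $uv\in\anmz$ with $v\in\sn^0$ (such completions exist for every element of $\anm$, after a rotation $s_i\mapsto s_{i+1}$ if necessary; Lemma \ref{lemma.upperbound2} even bounds their length, and the elements arising in Theorem \ref{thm.main} come with completions by construction). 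Then Theorem \ref{thm.cd} and commutativity of $\mathbb{B}$ give $c^u_\alpha=d^{uv}_{\alpha,v}=d^{uv}_{v,\alpha}$, and comparing coefficients of $A_{uv}$ on both sides of $\s_v\s_\alpha=\sum_\gamma d^{\gamma}_{v,\alpha}\s_\gamma$ produces $\beta$ with $uv=\beta\alpha$ reduced; since being $321$-avoiding and the bounds $\maxr\leq n-m$, $\maxc\leq m$ are inherited by factors of reduced factorizations, $\alpha\in\anmz$. This is exactly the first half of the paper's proof of Theorem \ref{thm.main}, which nowhere uses Proposition \ref{prop.decomp}, so invoking it here is not circular even though Theorem \ref{thm.8} is stated later in the paper; it is also precisely the justification for your desired statement that $\{\bs_v\mid v\in\anmz\}$ is a basis of $\bm$. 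With this supplied, your proof closes, and your final remark --- that the factorization genuinely fails in $\mathbb{B}$ because $r_m$ is not a $k$-rectangle, so passing to the quotient $\bm$ is essential --- is correct and worth keeping.
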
 
\begin{proof}
We use an induction on $d$. Note that for $v \in \anmz$, $A_v$ is the unique $0$-Grassmannian element in $\bs_{v}$, so it is enough to show that $A_v$ appears in the expansion of $\bs_{w^{(0)}} (\bs_{r_m})^d$ for a $0$-Grassmannian element $v$ if and only if $v=w$. Since $A_v$ appears in $\left(\bs_{w^{(0)}} (\bs_{r_m})^{d-1}\right)\bs_{r_m}$ there exists $v_1 \in \sn$ such that $v=v_1 v_2$ with $\ell(v)=\ell(v_1)+\ell(v_2)$ and $A_{v_1}$ is in the expansion of $\bs_{w^{(0)}} (\bs_{r_m})^{d-1}$ and $A_{v_2}$ appears in $\bs_{r_m}$. Since $v_2$ is also $0$-Grassmannian, $v_2$ must be $r_m$. By Lemma \ref{lemma.wm1}, $v_1$ is $0$-Grassmannian. By an induction, we have $v_1=w^{(0)}r_m^{d-1}$ and $v=w^{(0)}r_m^{d}=w$. \end{proof}

Now we are ready to prove Theorem \ref{thm.main}.\\

{\it Proof of Theorem \ref{thm.main}.} By Corollary \ref{cor.skew}, $s_{\lambda/d/\mu}$ is the same as $F_w$ where $w =\phi^{-1}(\lambda/d/\emptyset )\left( \phi^{-1}(\mu/0/\emptyset)\right)^{-1}$ is an element in $\anm$. Let $\alpha$ denote $\phi^{-1}(\lambda/d/\emptyset)$ and let  $v$ denote $\phi^{-1}(\mu/0/\emptyset)$ so that we have $wv=\alpha$ satisfying $\ell(w)+\ell(v)=\ell(\alpha)$. Note that both $\alpha$ and $v$ are in $\anmz$.
By Theorem \ref{thm.expansion}, we have the expansion 
$$F_w= \sum_{u \in \sn} c^w_u F_u.$$
We first show that if $c^w_u$ is nonzero, then $u$ is in $\anmz$. By Theorem \ref{thm.cd}, $c^w_u$ is equal to $d^\alpha_{u,v}$ by comparing the coefficient of $A_w$ in the both side of an equality $\s_u \s_v =\sum_{\alpha \in \sn^0} d^\alpha_{u,v} \s_\alpha$. By comparing the coefficient of $A_w$ in the both side of an equality $\s_v \s_u =\sum_{\alpha \in \sn^0} d^\alpha_{u,v} \s_\alpha$, there exists $\beta \in \sn$ such that $A_\beta$ appears in $\s_v$ and $\alpha=\beta u$ satisfying $\ell(\alpha)=\ell(\beta)+\ell(u)$ and $c^w_u=d^{\alpha}_{u,v}$. Note that existence of such an element $\beta$ is guaranteed if $c^w_u$ is nonzero. Since $\alpha$ is in $\anmz$, so is $u$.\\

Now it is enough to show that $c^{\lambda/d/\mu}_{\nu/e/\emptyset}$ is the same as $c^{\lambda/d-1/\mu}_{\nu/e-1/\emptyset}$ where $\lambda/d-1/\mu, \nu/e-1/\emptyset$ are cylindric shapes of type $(m,n)$ and $e\geq 1$. Let $u$ be $\phi^{-1}(\nu/e/\emptyset)$. Then $c^w_u$ is equal to $c^{\lambda/d/\mu}_{\nu/e/\emptyset}$. Therefore, we can write
$$s_{\lambda/d/\mu}=F_w= \sum_{u\in \anmz} c^w_u F_u = \sum_{\nu/e/\emptyset} c^w_u s_{\nu/e/\emptyset}$$
By Theorem \ref{thm.bij} and Corollary \ref{cor.skew}.\\

The condition $e\geq1$ implies that $s_{i}$ occurs in a reduced word of $u=\phi^{-1} (\nu/e/\emptyset)$ for any $i\in \znz$. By Lemma \ref{lemma.wm2} and Theorem \ref{thm.kschurrm}, $u=u'r_m$ for some $u' \in \anm$. By Lemma \ref{lemma.wm1}, $u$ is in $\anmz$. Therefore, by Proposition \ref{prop.decomp} we have
$$\s_u = \s_{u'} \s_{r_m}.$$
Indeed, if $u=u_0 r_m^{f}$ is a ribbon decomposition, a ribbon decomposition of $u'$ is $u_0 r_m^{f-1}$.
Consider a coefficient of $A_w$ in $\s_{r_m}\s_{u'} $, which is $c^w_u$. By Theorem \ref{thm.kschurrm}, the coefficient of $A_w$ in $\s_{r_m}\s_{u'} $ is $\sum_{\substack{w=ab}} c^b_{u'}$
where $a$ is a $n$-connected ribbon and $w=ab$ with $\ell(w)=\ell(a)+\ell(b)$. By Lemma \ref{lemma.wm2}, there is unique such $a$, and we have $c^w_u=c^b_{u'}$. By construction, we have $b= \phi^{-1}(\lambda/d-1/\emptyset)( \phi^{-1}(\mu/0/\emptyset))^{-1}$ and $u'=\phi^{-1}(\nu/e-1/\emptyset)$, so we showed that
$$c^{\lambda/d/\mu}_{\nu/e/\emptyset}=c^w_u=c^b_{u'}=c^{\lambda/d-1/\mu}_{\nu/e-1/\emptyset}.$$

If $e=0$, let's consider the following:

$$s_{\lambda/d/\mu}(x_1,\ldots,x_m)= \sum c^{\lambda/d/\mu}_{\nu/e/\emptyset} s_{\nu/e/\emptyset}(x_1,\ldots,x_m).$$
Postnikov \cite{postnikov.2005} showed that $s_{\nu/e/\emptyset}(x_1,\ldots,x_m)$ is nonzero if and only if $\nu/e/\emptyset$ is toric. The condition is equivalent to $e=0$. Then $s_{\nu/e/\emptyset}(x_1,\ldots,x_m)$ is equal to $s_\nu(x_1,\ldots,x_m)$, and by (\ref{postnikov}) $c^{\lambda/d/\mu}_{\nu/0/\emptyset}$ is equal to $C^{\lambda,d}_{\mu,\nu}$.
\qed

Note that we also proved the following theorem:
\begin{thm}\label{thm.8}
If $w\in \sn$ is $321$-avoiding, $c^w_u$ is 0 unless $u$ is a $321$-avoiding $0$-Grassmannian element. Moreover, if $w$ is $321$-avoiding and satisfies $\maxc(w) \leq m$ and $ \maxr(w) \leq n-m $, then $c^w_u=0$ unless $u$ is in $\anmz$.
\end{thm}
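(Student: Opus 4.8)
The plan is to observe that the argument already carried out in the proof of Theorem \ref{thm.main} never used the specific combinatorics of the cylindric shape $\lambda/d/\mu$: it used only that $w$ is $321$-avoiding and admits a factorization $w=\alpha v^{-1}$ with $\alpha,v\in\anmz$ and $\ell(\alpha)=\ell(w)+\ell(v)$. So I would split the proof into two tasks. First, isolate this mechanism as a statement valid for an arbitrary element $w$ equipped with a ``$0$-Grassmannian completion'' $\alpha=wv$. Second, produce such a completion for every $w\in\anm$ (which yields the second statement) and, after a change of the parameter $m$, for every $321$-avoiding $w\in\sn$ (which yields the first statement).

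For the mechanism, suppose $\alpha=wv$ is reduced with $\alpha,v\in\sn^0$. By Theorem \ref{thm.cd}, $c^w_u=d^\alpha_{u,v}$, and since $\mathbb{B}$ is commutative this equals the coefficient of $A_\alpha$ in $\s_v\s_u$, namely $\sum_{\beta\gamma=\alpha}c^\beta_v\,c^\gamma_u$, the sum being over reduced factorizations $\alpha=\beta\gamma$. Because $\alpha$ is $0$-Grassmannian, every such $\gamma$ is $0$-Grassmannian, and the unique $0$-Grassmannian term of $\s_u$ is $A_u$; hence only $\gamma=u$ survives and $c^w_u=c^\beta_v$ for the unique $\beta$ with $\alpha=\beta u$ reduced (and $c^w_u=0$ when no such $\beta$ exists). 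Thus $c^w_u\neq 0$ forces $u$ to be a reduced right factor of $\alpha$. A reduced right factor inherits $321$-avoidance, since a suffix of a reduced word containing no $s_is_{i+1}s_i$ contains none; and by the characterization in Lemma \ref{maxr} it also inherits the bounds $\maxr\le n-m$ and $\maxc\le m$, because any factorization $u=p\,d_S\,q$ lifts to $\alpha=(\beta p)\,d_S\,q$. Consequently, if $\alpha\in\anmz$ then $u\in\anmz$, and if $\alpha$ is merely $321$-avoiding then so is $u$.

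It remains to construct the completion. For $w\in\anm$ I would act on a deep cylinder: take $v=r_m^N$ with $N\ge\ell(w)$, whose shape $\phi(r_m^N)=\emptyset/N/\emptyset$ has an addable box on every diagonal and ample room thereafter. Since $A_w$ is nonzero in $\am$ and the only obstructions to a nonzero action are relations $(3)$--$(5)$ of Lemma \ref{lemma.ai}, all of which already hold for $w\in\anm$, one checks $A_w\cdot\phi(r_m^N)\neq 0$; setting $\alpha:=\phi^{-1}\big(A_w\cdot\phi(r_m^N)\big)$ gives $\alpha\in\anmz$ with $wv=\alpha$ reduced, and the mechanism yields the second statement. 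For the first statement, given a $321$-avoiding $w\in\sn$ I would pass to parameters $(m',n)$ with $\maxc(w)\le m'\le n-\maxr(w)$; such an $m'$ exists because a $321$-avoiding $w$ satisfies $\maxc(w)+\maxr(w)\le n$ (equivalently, $w$ is realized as a cylindric skew shape of some type, by Lam's correspondence \cite{lam.2006}), so that $w\in\mathbb{A}_{m',n}$ and the second statement applied at $(m',n)$ shows that $u$ is $321$-avoiding and $0$-Grassmannian.

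The main obstacle is the completion step, specifically verifying that a nonzero basis element $A_w$ of $\am$ acts nonzero on a sufficiently deep cylinder $\phi(r_m^N)$, i.e. that ``running out of room'' never occurs once $N$ is large and the defining relations $(1)$--$(5)$ are the only relations killing the action. I expect this to reduce to a finite, diagonal-by-diagonal check that each letter of a reduced word of $w$ can be added in turn, using $321$-avoidance to rule out two boxes competing for the same position; the companion inequality $\maxc(w)+\maxr(w)\le n$ needed for the first statement is the same phenomenon read through Lam's correspondence.
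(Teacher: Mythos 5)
Your ``mechanism'' paragraph is correct and is essentially identical to the argument the paper runs inside the proof of Theorem \ref{thm.main}: from a reduced factorization $\alpha=wv$ with $\alpha,v\in\anmz$, Theorem \ref{thm.cd} and commutativity of the affine Fomin--Stanley algebra give $c^w_u=d^\alpha_{u,v}$, the unique-$0$-Grassmannian-term property of $\s_u$ forces $\alpha=\beta u$ reduced, and $321$-avoidance together with the Lemma \ref{maxr} characterization of $\maxr$ and $\maxc$ descend from $\alpha$ to its reduced right factor $u$. The genuine gap is your completion step, and it fails as stated. The shape $\phi(r_m^N)=\emptyset/N/\emptyset$ does \emph{not} have an addable box on every diagonal: its outer boundary is a translate of the boundary of $\emptyset/0/\emptyset$, so it has exactly \emph{one} addable box, lying on diagonal $0$. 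Concretely, $A_iA_{r_m}=0$ in $\am$ for every $i\neq 0$: for $i\in[1,n-m-2]$ commute $A_i$ past $A_{u_{[-m,-1]}}$ and apply Lemma \ref{lemma.ij} to $d_{[0,n-m-1]}$, while the cases $i=n-m-1$ and $i\in[n-m,n-1]$ are exactly those treated in the proofs of Lemma \ref{lemma.wm1} and Lemma \ref{lemma.ij}. Hence already $w=s_i$ with $i\neq 0$, which lies in $\anm$, satisfies $A_w\cdot\phi(r_m^N)=0$ for all $N$. More structurally, Lemma \ref{lemma.wm1} shows that if $wr_m^N$ is reduced and $321$-avoiding then $w$ is itself $0$-Grassmannian, so right multiplication by $r_m^N$ can only ``complete'' elements that need no completion. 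Your justification also rests on a false principle: $A_w\neq 0$ in $\am$ does not imply that $A_w$ acts nonzero on a chosen shape --- the module spanned by cylindric shapes is not free, and no fixed $v$ can serve all of $\anm$, since a fixed shape has at most $\min(m,n-m)<n$ addable diagonals while every $i\in\znz$ occurs as the last letter of some element of $\anm$.

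The repair is to adapt the completion to $w$, which is what the paper does (implicitly) in asserting Theorem \ref{thm.8} as a byproduct of the proof of Theorem \ref{thm.main}: by Lam's correspondence \cite{lam.2006}, recalled in the proof of Lemma \ref{lemma.upperbound2}, every $w\in\anm$ is realized by a cylindric skew shape of type $(m,n)$, which after a diagonal-preserving translation may be taken of the form $\lambda/d/\mu$; then $v:=\phi^{-1}(\mu/0/\emptyset)\in\anmz$ gives $wv=\phi^{-1}(\lambda/d/\emptyset)\in\anmz$ reduced, exactly the hypothesis under which the proof of Theorem \ref{thm.main} (and your mechanism) operates. Alternatively, Lemma \ref{lemma.upperbound2} constructs such a $v$ with an explicit length bound, but its output $wv$ is only $p$-Grassmannian for some $p$, so one must additionally rotate via $F_w=F_{f(w)}$, where $f(s_i)=s_{i+1}$, to reduce to $p=0$ before invoking Theorem \ref{thm.cd}. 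With that substitution your argument, including the reduction of the first statement to the second by changing $m$ (which rests on the same correspondence), goes through; without it, both statements remain unproven.
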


We can apply Theorem \ref{thm.8} to characterize the basis of $\bm$. Via the projection $\mathbb{B}\rightarrow \bm$, $\bs_u$ is zero when $u \notin \anmz$ since for every term $A_w$ appearing in $\s_u$ we have $w \notin \anm$. On the other hand, $\bs_u$ is not zero when $u$ is in $\anmz$ because $\bs_u$ have the unique $0$-Grassmannian term $A_u$. Therefore, the set $\{\bs_u \mid u\in \anmz \}$ forms a basis of $\bm$.

\section{Effective algorithm for the computation}
In this section, we provide an effective algorithm to compute the expansion of the cylindric skew Schur function in terms of cylindric Schur functions, and the expansion of the affine Stanley symmetric function in terms of affine Schur functions. The strategy is to use the following dual Pieri rule of the affine Stanley symmetric functions.

\begin{thm} \cite{LLMS} \label{thm.dualpieri}
For any $m\leq n$, we have
$$h_m^\perp(F_w)=\sum_{w=uv} F_v =\sum_{w=vu} F_v$$
where the summations run over all cyclically decreasing elements $u$ of length $m$ satisfying $\ell(w)=\ell(u)+\ell(v)$.
\end{thm}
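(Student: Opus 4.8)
The plan is to establish this identity by duality, transferring the Pieri-type statement on the affine Stanley symmetric functions $F_w$ into a purely algebraic statement in the affine nilCoxeter algebra $\mathbb{A}$, where it follows from the structure of the elements $\mathbf{h}_m = \sum_{|J|=m} A_{d_J}$. The operator $h_m^\perp$ is the adjoint of multiplication by $h_m$ with respect to the Hall inner product $\langle \cdot,\cdot\rangle_\Lambda$ on $\Lambda$. Since the affine Schur functions $\{F_v \mid v\in \sn^0\}$ and the $k$-Schur functions $\{s^{(k)}_u\}$ (equivalently the noncommutative $k$-Schur functions $\mathbf{s}_u$) form dual bases under the induced pairing $\Lambda_{(n)}\times \Lambda^{(n)}\to \bZ$, I would first pin down how $h_m^\perp$ acts on an arbitrary $F_w$ by pairing against the dual basis: the coefficient of $F_v$ in $h_m^\perp(F_w)$ equals $\langle h_m \cdot (\text{dual of } F_v), \, F_w\rangle_\Lambda$, which is the coefficient of $F_w$ in the expansion of $h_m\cdot(\text{dual of }F_v)$.

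\medskip

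Concretely, I would realize this pairing inside $\mathbb{B}\cong \Lambda_{(n)}$. Under the isomorphism sending $h_i\mapsto \mathbf{h}_i$, multiplication by $h_m$ becomes left (or right) multiplication by $\mathbf{h}_m=\sum_{|J|=m}A_{d_J}$ in the affine Fomin--Stanley algebra. The key bridge is Theorem \ref{thm.kschur}, which identifies the coefficients $c^w_u$ simultaneously as the coefficients of $A_w$ in the noncommutative $k$-Schur function $\mathbf{s}_u$ and as the coefficients of $F_u$ in the Schur expansion of $F_w$. First I would write $h_m^\perp(F_w)=\sum_{v} a^w_v F_v$ and identify $a^w_v$ via this dictionary, then compute $\mathbf{h}_m\,\mathbf{s}_v$ in $\mathbb{A}$. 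Because $A_{d_J}A_x = A_{d_J x}$ when $\ell(d_J x)=\ell(d_J)+\ell(x)$ and is $0$ otherwise, expanding $\mathbf{h}_m\,\mathbf{s}_v = \sum_{|J|=m} A_{d_J}\,\mathbf{s}_v$ produces exactly the $0$-Grassmannian terms $A_w$ for which $w$ admits a factorization $w=d_J\cdot(\text{term of }\mathbf{s}_v)$ with lengths adding; reading off the coefficient of $A_w$ then yields the sum over cyclically decreasing $u=d_J$ of length $m$ with $w=uv'$. Matching this against the claimed right-hand side $\sum_{w=uv}F_v$ gives one of the two equalities, and the symmetry $\mathbf{h}_m\,\mathbf{s}_v=\mathbf{s}_v\,\mathbf{h}_m$ (commutativity of $\mathbb{B}$, Lam's result) delivers the second equality $\sum_{w=vu}F_v$.

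\medskip

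The step I expect to be the main obstacle is making the duality computation fully rigorous at the level of the coefficient extraction, rather than at the level of basis elements. The subtlety is that $h_m^\perp(F_w)$ is an operator statement in $\Lambda^{(n)}$, while the clean combinatorics of $A_{d_J}A_x$ lives in $\mathbb{A}$; I must be careful that the adjointness of $h_m^\perp$ under $\langle\cdot,\cdot\rangle_\Lambda$ corresponds \emph{precisely} to multiplication by $\mathbf{h}_m$ on the $\mathbb{B}$-side of the dual basis, and that no boundary terms or non-$0$-Grassmannian contributions are silently dropped. In particular, one must verify that every $A_w$ appearing in $A_{d_J}\mathbf{s}_v$ with $w$ ranging over the relevant reduced factorizations contributes with coefficient exactly matching $c^w_v$, using the defining property that $\mathbf{s}_v$ has $A_v$ as its unique $0$-Grassmannian term together with Theorem \ref{thm.kschur}. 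Once this correspondence is nailed down, the Pieri rule is immediate; the remaining work is bookkeeping to confirm that the factorizations $w=uv$ with $u=d_J$ cyclically decreasing of length $m$ are in bijection with the surviving terms, which is routine given the length-additivity condition $\ell(w)=\ell(u)+\ell(v)$.
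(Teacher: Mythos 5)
The paper does not prove Theorem \ref{thm.dualpieri} at all --- it is imported verbatim from \cite{LLMS} --- so there is no internal proof to compare against, and your proposal has to be judged on its own merits. Its skeleton is sound and would work: by adjointness and the duality of $\{F_v \mid v\in \sn^0\}$ with $\{s^{(k)}_v\}$, the coefficient of $F_v$ in $h_m^\perp(F_w)$ is $\langle h_m s^{(k)}_v, F_w\rangle_\Lambda$; by Theorem \ref{thm.kschur}, extended linearly over $\Lambda_{(n)}\cong\mathbb{B}$, this equals the coefficient of $A_w$ in $\h_m\s_v$; since $A_{d_J}A_x=A_{d_Jx}$ when lengths add and $0$ otherwise, that coefficient is $\sum c^{v'}_v$ over factorizations $w=uv'$ with $u$ cyclically decreasing of length $m$ and $\ell(w)=\ell(u)+\ell(v')$; resumming with $F_{v'}=\sum_v c^{v'}_v F_v$ (Theorem \ref{thm.expansion}) gives $\sum_{w=uv'}F_{v'}$, and commutativity of $\mathbb{B}$ gives the right-handed version.

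Two points need repair before this is a proof. First, the step you flag as the ``main obstacle'' is a genuine gap in any pairing argument of this kind: two symmetric functions can pair equally against every element of $\Lambda_{(n)}$ and still differ by an element of the span of the $m_\lambda$ with $\lambda_1\geq n$, so before you may ``read off'' coefficients of $h_m^\perp(F_w)$ from the dual basis you must show that $h_m^\perp(F_w)$ lies in the span of the affine Schur functions. The fix is short: $h_m^\perp m_\mu$ equals $m_{\mu\setminus\{m\}}$ if $m$ is a part of $\mu$ and $0$ otherwise, so for $m<n$ the operator $h_m^\perp$ preserves the space of symmetric functions whose monomial expansion is supported on $k$-bounded partitions; $F_w$ lies in that space, and the affine Schur functions form a basis of it (the case $m=n$ is trivial, both sides vanishing). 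Second, your assertion that expanding $\h_m\s_v$ ``produces exactly the $0$-Grassmannian terms $A_w$'' is wrong and would break the argument if taken literally: the theorem concerns arbitrary $w\in\sn$, so you must extract the coefficient of $A_w$ for arbitrary $w$, and the terms of $\h_m\s_v$ are by no means all $0$-Grassmannian. Relatedly, what this extraction uses is the full force of Theorem \ref{thm.kschur} (the coefficient of \emph{every} $A_x$ in $\s_v$ is $c^x_v$); the unique-$0$-Grassmannian-term characterization of $\s_v$, which you also invoke, plays no role here --- it is the relevant tool for statements like Proposition \ref{prop.decomp}, not for this one.

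Finally, note there is a leaner route that avoids the $k$-Schur dual basis, Theorems \ref{thm.kschur} and \ref{thm.expansion}, and the span issue entirely: pair against the basis $\{h_\lambda\}$ of all of $\Lambda$. By the definition of $F_w$ together with its symmetry, $\langle F_w, h_\lambda\rangle_\Lambda$ is the coefficient of $A_w$ in $\h_{\lambda_1}\cdots\h_{\lambda_\ell}$; hence $\langle h_m^\perp F_w, h_\lambda\rangle_\Lambda=\langle F_w, h_m h_\lambda\rangle_\Lambda$ is the coefficient of $A_w$ in $\h_m\h_{\lambda_1}\cdots\h_{\lambda_\ell}$, which by associativity in $\mathbb{A}$ equals $\sum_{w=uv}\langle F_v,h_\lambda\rangle_\Lambda$; nondegeneracy of the Hall pairing on $\Lambda$ then gives the first equality, and Lam's commutation of the $\h_i$'s (equivalently, the symmetry of $F_w$) gives the second. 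This is essentially the original argument and requires no positivity input, whereas your route imports two deep theorems to prove an identity that sits below them.
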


To describe the algorithm, we define a total order on a set of partitions. For two partitions $\mu,\nu$, we denote $\mu<\nu$ when either $|\mu|<|\nu|$, or $|\mu|=|\nu|$ and $\mu$ is less than equal to $\nu$ with respect to the reverse colexicographic ordering, i.e., there exists $a>0$ such that $\mu_a>\nu_a$ and $\mu_b=\nu_b$ for all $b>a$. Here, for a partition $\lambda$ we set $\lambda_b=0$ if $b>\ell(\lambda)$.\\

For $w\in \sn$ and $v \in \sn$ such that $wv$ is $p$-Grasssmannian for some $p\in \znz$, consider a maximal cyclically decreasing decomposition $d_{J_\ell}\cdots d_{J_1}$ of $v$ and we denoted the partition $(|J_1|,\ldots,|J_\ell|)$ by $\lambda(v)$. We will show that $F_w$ can be written as a summation $\sum_u \pm F_u$ such that for any $u\in \sn$ there exists $v_u \in \sn$ such that $u v_u$ is $p$-Grassmannian and $\lambda(v_u)<\lambda(v)$. Then by repeating the process on each $F_u$ when $\lambda(v_u)$ is not the empty partition, $F_w$ is equal to a linear combination of $F_u$ with $\lambda(u)=\emptyset$, which means that all $u$ is $p$-Grassmannian and we are done. The algorithm preserves the $321$-avoiding condition, namely if $w$ is in $\anm$ for some $m$, then $u\in \anm$ for all $u$ appearing in the summation.\\

First step is to show that for each $w\in \sn$, we find a small $v\in \sn$ such that $wv$ is $p$-Grassmannian for some $p\in \znz$.
  
\begin{lemma} \label{lemma.upperbound1}
For any affine permutation $w$, there exists $v$ such that $\ell(wv)=\ell(w)+\ell(v)$, $\ell(v)\leq (k-1)\cdot 1 +(k-2)\cdot 2+ \cdots +1 \cdot (k-1)$, and $wv$ is $p$-Grassmannian for some $p\in \znz$.
\end{lemma}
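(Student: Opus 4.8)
The plan is to realize $v$ as a length-additive right factor that carries $w$ up in the right weak order until the result has a single right descent. Recall that $wv$ is $p$-Grassmannian exactly when every reduced word ends in $s_p$, equivalently when $D_R(wv)=\{p\}$; viewing $wv$ as an affine permutation of $\bZ$, this says that the window $\langle (wv)(p+1),\dots,(wv)(p+n)\rangle$ is strictly increasing. Thus the task is to find a reduced product $v=s_{j_1}\cdots s_{j_\ell}$ with $\ell(ws_{j_1}\cdots s_{j_t})=\ell(w)+t$ for all $t$ whose net effect is to sort one window of $w$ into increasing order. Every length-additive right multiplication by $s_j$ is an adjacent transposition performed at an \emph{ascent} of the current permutation, and I would track its action directly on the window, keeping in mind that the affine generator $s_0$ both transposes and shifts values across the cycle.

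First I would build the chain greedily. As long as the current element $x\ge_R w$ has more than one right descent, its ascent set is nonempty (an affine permutation never has $D_R(x)=\znz$, since that would force $x(i)>x(i+1)$ for all $i$, contradicting $x(i+n)=x(i)+n$), and I would select an ascent whose transposition strictly reduces a \emph{defect} statistic measuring the failure of the window to be increasing from a single cut point. Concretely, I would process the entries of the window in cyclic order and route each out-of-place entry around the affine cycle using the wrap generator, which is the only device available to raise values in the length-additive direction. Since each move is taken at an ascent, length-additivity $\ell(xs_j)=\ell(x)+1$ holds automatically, so $\ell(v)$ equals the number of moves and the process terminates precisely when $D_R(x)=\{p\}$ for the resulting cut point $p$.

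Finally I would bound the number of moves. Writing $w=uz$ with $u$ the $0$-Grassmannian minimal coset representative of $wW_0$, where $W_0=\langle s_1,\dots,s_{n-1}\rangle\cong S_n$, and $z\in W_0$ with $\ell(w)=\ell(u)+\ell(z)$, one sees that for $i\in\{1,\dots,n-1\}$ a finite descent of $w$ at $i$ occurs iff $z$ has a descent at $i$ (because $u(1)<\cdots<u(n)$); hence the cost of the sort is governed entirely by $z$, and I expect the worst case to be $z=w_0$, the longest element, whose window is fully reversed. For that case the cyclic sort peels off the strands in $k-1$ rounds, the $i$-th round moving $i$ entries a distance $k-i$ (equivalently $k-i$ entries a distance $i$), which sums to $\sum_{i=1}^{k-1} i(k-i)=\binom{n}{3}$; for a general $w$ each strand requires no more work than in the reversed case, giving the stated bound. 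The main obstacle is exactly this extremal count together with verifying length-additivity through the affine wrap: because $s_0$ alters the actual values in the window rather than merely permuting fixed entries, one must check that the routing never cancels an existing inversion, so that lengths keep adding, and that no $w$ forces more moves than $w_0$ does.
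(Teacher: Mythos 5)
Your bound argument rests on a claim that is false, not merely unproven. You assert that after writing $w=uz$ (with $u$ the minimal coset representative and $z\in W_0$) ``the cost of the sort is governed entirely by $z$,'' with worst case $z=w_0$. The descent-set coincidence you cite is correct ($w$ and $z$ have the same descents in $\{1,\dots,n-1\}$, since $u$ is increasing on the window), but the cost is genuinely not a function of $z$: the descent at position $0$, and the way values wrap around the cycle during the sort, depend on $u$. Concretely, in $\widetilde{S}_4$ take $z=s_2$, which is itself $2$-Grassmannian, so its cost is $0$; but $w=s_0s_2$, with window $[0,3,2,5]$, has right descent set $\{0,2\}$, so its cost is at least $1$. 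Thus no inequality of the form $\mathrm{cost}(uz)\le \mathrm{cost\text{-}of\text{-}sorting}(z)$ can follow from the descent comparison, and the assertion ``each strand requires no more work than in the reversed case'' has no basis. (The numerical target is right: $\sum_{i=1}^{k-1}i(k-i)=\binom{n}{3}$ is exactly the paper's bound; it is the reduction to $z$ that fails.)

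The greedy step is the second gap: the ``defect statistic'' is never defined, and the obvious candidate, the number of right descents, does not work, because there are elements from which no single length-additive multiplication decreases it. Starting from $w_0=[4,3,2,1]\in\widetilde{S}_4$, the only available move is $s_0$, giving $[-3,3,2,8]$ with descent set $\{0,2\}$; both legal moves $s_1$ and $s_3$ again yield two descents, and in fact every geodesic to a Grassmannian element stays at two descents for two further steps before dropping to one (the minimal cost is $4=\binom{4}{3}$). So termination with the stated bound requires a carefully chosen potential, and this is precisely what the paper supplies: it tracks the affine inversion code $c_i(w)=\#\{j<i \mid w(j)>w(i)\}$, characterizes $p$-Grassmannian as the code being weakly decreasing on the window $[p+1,p+n]$, and in $k-1$ explicit stages multiplies by products of cyclically decreasing interval elements so that stage $i$ lengthens the sorted run by one at cost at most $i(k-i)$. (Your worry about length-additivity, by contrast, is not a real obstacle---multiplying at an ascent always adds one to the length, as you note yourself.) As it stands, your proposal asserts the conclusion of that potential-function analysis without providing it.
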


\begin{remark}
When we omit the condition $\ell(v)\leq (k-1)\cdot 1 +(k-2)\cdot 2+ \cdots +1 \cdot (k-1)$, the theorem follows from \cite{morse.schilling.2016}. However, in their proof $\ell(v)$ is always bigger than or equal to $(k-1)\cdot 1 +(k-2)\cdot 2+ \cdots +1 \cdot (k-1)$.
\end{remark}When $w$ is in $\anm$ for some $m$, the upper bound on $\ell(v)$ is much smaller.
\begin{lemma} \label{lemma.upperbound2}
If $w$ is in $\anm$,  there exists $v \in \anmz$ such that $\ell(wv)=\ell(w)+\ell(v)$, $\ell(v)\leq (n-m)(m-1)/2$, and $wv$ is $i$-Grassmannian for some $p \in \znz$.
\end{lemma}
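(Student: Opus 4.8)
The plan is to reduce everything to the statement that the right descent set of $w$ can be collapsed to a single element by a short, length-additive right multiplication. Recall that $wv$ is $p$-Grassmannian precisely when its right descent set is the singleton $\{p\}$, so I want a $v\in\anmz$ with $\ell(wv)=\ell(w)+\ell(v)$ that kills every right descent of $w$ but one. Since $w\in\anm$ is $321$-avoiding with $\maxr(w)\le n-m$ and $\maxc(w)\le m$, Lemma \ref{maxr} already forbids $w$ from having all of $\bZ/n\bZ$ as descents, and in fact confines the relevant combinatorics to a region with at most $m$ rows and $n-m$ columns.

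The cleanest way to see the construction is geometric, through Corollary \ref{cor.skew} and Theorem \ref{thm.bij}: realize $w$ as the cylindric skew shape $\lambda/d/\mu$ via $w\,\phi^{-1}(\mu/0/\emptyset)=\phi^{-1}(\lambda/d/\emptyset)$. The right descents of $w$ correspond to the concave (inner) corners of $\mu$, i.e. the indices $i$ at which the $(m,n)$-periodic sequence $\mu[0]$ drops, and appending a cyclically decreasing factor on the right performs a horizontal-strip fill of such a notch (filling $\mu$ all the way recovers the $0$-Grassmannian element $\phi^{-1}(\lambda/d/\emptyset)$, at cost $|\mu|$). I would then build $v$ greedily: as long as more than one inner corner remains, append a cyclically decreasing factor filling a notch, strictly decreasing the number of corners while preserving the $321$-avoiding condition and length-additivity. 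Here Lemma \ref{lemma.wm1} keeps the accumulated factor $0$-Grassmannian, and staying inside the $m\times(n-m)$ region keeps $\maxr,\maxc$ within bounds, so $v\in\anmz$; when a single corner survives, $\mu$ has become a rectangle and $wv$ has a unique right descent, hence is Grassmannian.

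For the length bound I would contrast with Lemma \ref{lemma.upperbound1}. Filling $\mu$ to the rectangle containing it costs up to $(m-1)(n-m)$ boxes, exactly twice what is claimed, and this is the source of the larger (cubic) bound in the unrestricted setting of \cite{morse.schilling.2016}. The factor $\tfrac12$ must come from the cylindric/rotational freedom in choosing which descent to retain: a diagonal shift of the cylinder is the automorphism $s_i\mapsto s_{i+1}$ of $\sn$, so I am free to keep the deepest inner corner and fill only toward it. With the at most $m-1$ remaining corners nested into a staircase inside an $m\times(n-m)$ box, the filled region fits in a right triangle with legs $m-1$ and $n-m$, giving $\ell(v)\le (n-m)(m-1)/2$.

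The main obstacle is precisely this optimization of the corner choice together with the simultaneous bookkeeping: one must show the greedy filling can always be organized as a single monotone staircase, rather than an arbitrary collection of notches whose fills overlap wastefully, and that every intermediate step stays reduced, $321$-avoiding, and within the $\maxr\le n-m$, $\maxc\le m$ constraints. Verifying that the optimal retained descent yields the triangular—rather than rectangular—count is where the claimed bound $(n-m)(m-1)/2$ genuinely lives; the remainder is the descent-to-corner dictionary and the routine checks via Lemma \ref{maxr} and Lemma \ref{lemma.wm1}.
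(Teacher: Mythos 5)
Your reduction agrees with the paper's in its setup: realize $w$ as a cylindric shape of type $(m,n)$, observe that $wv$ is Grassmannian exactly when $v$ fills the region between the inner boundary $\beta=\mu[s]$ and an increasing $(m,n)$-periodic sequence $\alpha\subset\beta$ that is constant on $m$ consecutive entries, and then bound $\ell(v)=|D_{\beta/\alpha}|$. The gap is precisely where you flagged it, and your proposed resolution (keep the ``deepest'' inner corner, fill toward it, and the filled region fits in a right triangle with legs $m-1$ and $n-m$) does not hold. To see this, write $g_i=\beta_{i+1}-\beta_i\ge 0$ for the jumps of the inner boundary, so that $g_{i+m}=g_i$ and $g_a+g_{a+1}+\cdots+g_{a+m-1}=n-m$ for every $a$. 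Flattening at level $\beta_a$ retains only the corner at jump $g_{a-1}$ and costs $(m-1)g_a+(m-2)g_{a+1}+\cdots+1\cdot g_{a+m-2}$ boxes. Now take $m=4$, $n=17$, and jumps $(g_1,g_2,g_3,g_4)=(5,4,4,0)$. The unique deepest corner is $g_1=5$; retaining it means flattening at $a=2$, at cost $3\cdot 4+2\cdot 4+1\cdot 0=20$, which exceeds $(n-m)(m-1)/2=19.5$. The choices that do work here are $a=3$ and $a=4$, with costs $17$ and $14$: the optimal corner to retain is \emph{not} the deepest one. The triangle count is also internally inconsistent: the optimal region here has row lengths $(0,0,5,9)$, i.e.\ $14$ boxes, but a right triangle with legs $3$ and $13$ contains only $12$ complete lattice boxes, so ``fits inside a triangle of area $(n-m)(m-1)/2$'' cannot be the mechanism behind the bound -- either the containment fails or the area does not bound the box count.

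What the paper does instead, and what your argument is missing, is a pigeonhole/averaging argument over all $m$ flattening levels rather than a choice of a single distinguished corner. For $a=1,\ldots,m$ set $\alpha(a)_i=\beta_a$ for $a\le i<a+m$; then $|D_{\beta/\alpha(a)}|=\sum_{i=1}^m\beta_i-m\beta_a+(n-m)(a-1)$, and when one sums over $a$ the terms involving $\beta$ cancel, so the average of the $m$ costs is exactly $(n-m)(m-1)/2$; hence some $a$ achieves at most the average. (In the example above the four costs are $27,20,17,14$, with average $19.5$.) This also sidesteps your greedy notch-by-notch filling and its attendant bookkeeping (keeping each intermediate product reduced, $321$-avoiding, and within $\maxr\le n-m$, $\maxc\le m$): one defines $\alpha(a)$ in a single stroke and takes $v$ to be the element with $A_v\cdot(\alpha(a)/\alpha(a))=\beta/\alpha(a)$. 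Replacing your deepest-corner step by this averaging would complete your proof; as written, the crucial inequality is unproven and, under the natural reading of your construction, false.
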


We postpone a proof of two lemmas to the end of the section and describe the algorithm assuming the lemmas first.\\ 

For an affine permutation $w$, assume that we have $v\in \sn$ such that $wv$ is $p$-Grassmannian for some $p$. First of all, since $F_w$ is equal to $F_{f(w)}$ where $f:\sn \rightarrow\sn$ is the map sending $s_i$ to $s_{i+1}$, we can assume that $p$ is equal to zero. Let $\lambda=(\lambda_1,\ldots,\lambda_\ell)$ be $\lambda(v)$. Since $v$ is $0$-Grassmannian, $v$ is uniquely determined by $\lambda$. Let $v'$ be the $0$-Grassmannian element corresponding to $(\lambda_1,\ldots,\lambda_{\ell-1})$, so that we have
$$v= d_{[-\ell+1,\lambda_\ell-\ell]}v'$$

Now we apply Theorem \ref{thm.dualpieri} to $w'=wd_{[-\ell+1,\lambda_\ell-\ell]}$ and $m=\lambda_\ell$. Recall that $B_-(w)$ is the set of $w'u_J$ for a subset $J\subset \znz$ with $|J|=\lambda_\ell$, $\ell(wu_J)=\ell(w)-|J|$ and $J\neq [-\ell+1,\lambda_\ell-\ell]$ and $B_+(w)$ is the set of $u_Jw$ for a subset $J\subset \znz$ with $|J|=\lambda_\ell$ and $\ell(u_J w)=\ell(w)-|J|$. Then we have
$$F_w=\sum_{u \in B_+(w)}F_u - \sum_{u\in B_-(w)} F_u.$$
Here we used the identity $(d_J)^{-1}=u_J$.\\

Note that for $u=u_Jw \in B_+(w)$ $uv'$ is $0$-Grassmannian, since $wv'=d_Juv'$ is $0$-Grassmannian and $\ell(d_Juv')=|J|+\ell(u)+\ell(v')$. Clearly, we have $\lambda(v)>\lambda(v')$. For $u=w u_J \in B_-(w)$, we know that $wv'=ud_Jv'$ and $d_Jv'$ are $0$-Grassmannian. For $J\subset \znz$ with $|J|=\lambda_\ell$, we have $\lambda(d_Jv')\leq \lambda(v)$ by the definition of $v'$ and the equality holds for the unique $J$ which is $[-\ell+1,\lambda_\ell-\ell]$ and in this case $d_Jv'=v$. Therefore, we have $\lambda(d_Jv')<\lambda(v)$ for other $J$, and we obtained an algorithm to compute the expansion of the affine Stanley symmetric function in terms of affine Schur functions by repeating the procedure.\\

When $w$ is in $\anm$ for some $m$, we can find $v\in \anm$ such that $wv$ is $p$-Grassmannian by Lemma \ref{lemma.upperbound2}. Since every element in $B_-(w)$ and $B_+(w)$ is of the form $u_Jw$ or $wu_J$ with the length equal to $\ell(w)-|J|$ these elements are also in $\anm$, and by repeating the procedure we get the expansion of $F_w$ in terms of $F_u$ where $u \in \anm$ is $0$-Grassmannian. By Theorem \ref{thm.bij}, $F_u$ is equal to the cylindric Schur function $s_{\phi(w)}$ and we are done. See Example 2 for an example of the algorithm.\\

Now we prove Lemma \ref{lemma.upperbound1} and Lemma \ref{lemma.upperbound2}.

{\it Proof of Lemma \ref{lemma.upperbound1}.}
For $w\in \sn$ and $i\in \mathbb{Z}$, let $c_i(w)$ be the number of $j<i$ such that $w(j)>w(i)$. Note that $w$ is $p$-Grassmannian if and only if $w(p+1)<w(p+2)<\cdots<w(p+n)$. This condition is equivalent to $c_{p+1}(w)\geq c_{p+2}(w)\geq \cdots \geq c_{p+n}(w)$, and in this case $c_{p+n}(w)=0$ and a partition $(c_{p+1}(w),c_{p+2}(w),\ldots,c_{p+n}(w))$ is the same as the transpose of $\lambda(w)$. When $w$ is a general element in $\sn$, we always have $c_{j}=c_{j+n}$ for any $j\in \mathbb{Z}$, and one can show that there exists $j$ such that $c_j=0$. See \cite{denton.2012} for example.\\

For given $w\in \sn$, we inductively construct $v_i\in \sn$ for $i=1,\ldots,k-2$ such that $w_i=v_1v_2 \cdots v_i$ satisfy $\ell(ww_i)=\ell(w)+\sum_{j=1}^i \ell(v_j)$, $c_{q_i+1}(ww_i)\geq c_{q_i+2}(ww_i)\geq \cdots \geq c_{q_i+i+1}(ww_i)$ for some $q_i \in \mathbb{Z}$, and $c_{q_i+j}(ww_i)$ is less than equal to $c_{q_i+i+1}(ww_i)$ for $q_i+i+1<j\leq q_i+n$. We also require that $\ell(v_i)\leq i(k-i)$, so that $ww_{k-1}$ is $q_{k-1}$-Grassmannian and $\ell(w_{k-1}) \leq \sum_{i=1}^{k-1} i(k-i)$, which proves the lemma.\\

For $w\in \sn$, choose an integer $i_1$ with maximal $c_{i_1}(w)$, and choose $i_1<j_1<i_1+n$ such that $c_{j_1}(w)$ is the largest number in a set $\{c_p(w)\mid i_1<p<i_1+n\}$. If there are multiple choices, we just choose one of them. Define $v_1$ to be $s_{i_1}s_{i_1+1}\ldots s_{j_1-2}=d_{[i_1,j_1-2]}$ if $j_1>i_1+1$, and the identity if $j_1=i_1+1$.
One can show that $c_p(wv_1)=c_p(w)$ if $p$ is not in $[i_1,j_1-1]$ modulo $n$, $c_p(wv_1)=c_{p-1}(w)$ for $p$ in $[i_1,j_1-2]$, and $c_{j_1-1}(wv_1)=c_{i_1}(w)+j_1-i_1-1$. Since $c_{i_1}$ is the largest number among $c_j$'s, $\ell(wv_1)=\ell(w)+\ell(v_1)$. Therefore, when we take $q_1=j_1-2$ so that $c_{q_1+1}(wv_1)>c_{q_1+2}(wv_2)$ and other $c_j$'s are less than equal to $c_{q_1+2}(wv_2)$. Note that $\ell(v_1)$ is at most $k-1$.\\

We will construct $v_{i+1}$ similarly for $i<k-1$, assuming that we are given $v_1,\ldots,v_{i-1}$. Note that there exists $q_i \in \mathbb{Z}$ such that $c_{q_i+1}(ww_i)\geq c_{q_i+2}(ww_i)\geq \cdots \geq c_{q_i+i+1}(ww_i)$. Let $c_{j}(w w_i)$ be the largest number in a set $\{c_p(ww_i)\mid q_i+i+1<p\leq q_i+n\}$. If $j$ is $q_i+i+2$, then we can take $v_{i+1}$ to be the identity and we are done. If $j>q_i+i+2$, then we take
$$v_{i+1}= d_{[q_i+i+1,j-2]}d_{[q_i+i,j-3]}\cdots d_{[q_i+1,j-i-2]}.$$
Since $c_j(w w_i)$ is less than or equal to $c_{q_i+1}(ww_i),\ldots,c_{q_i+i+1}(ww_i)$, $\ell(ww_iv_{i+1})=\ell(ww_i)+\ell(v_{i+1})$. One can check that $c_p(ww_{i+1})=c_p(ww_i)$ if $p$ is not in $[q_i+1,j-1]$ modulo $n$, $c_p(wv_1)=c_{p-i-1}(w)$ for $p$ in $[q_i+1,j-i-2]$, and $c_{p}(wv_1)=c_{p-(j-q_i-i-2)}(w)+(j-q_i-i-2)$ for $p$ in $[j-i-1,j-1]$.
Also note that $\ell(v_{i+1})\leq (i+1)(j-q_i-i-2)\leq(i+1)(k-i-1)$. Therefore, one can take $q_{i+1}=j-i-1$ and we are done.
\qed \\

{\it Proof of Lemma \ref{lemma.upperbound2}.}
Although the proof of the lemma can be proved similarly, it is much better to visualize using cylindric shapes. Lam \cite{lam.2006} showed that for a $321$-avoiding affine permutation $w$, there exists $q<n$ such that $F_w$ is equal to $F_{\lambda[r]/\mu[s]}$ where $\lambda[r]/\mu[s]$ is a cylindric shape of type $(q,n)$ for some $q$. If $s_i$ appears at least once in $w \in \anm$ for all $i$, such $q$ must be unique which is the same as $m$. Indeed, by Lemma \ref{lemma.wm2} and Lemma \ref{lemma.ribbon2} $w$ can be written as $uv$ where $\ell(w)=\ell(u)+\ell(v)$ and $v$ is a $n$-connected ribbon $u_{J^c}d_J$. Then the $n$-connected ribbon determines $q$ by $|J^c|=m$.\\

When $s_i$ does not appear in $w\in \sn$ for some $i\in \znz$, one can still find a cylindric shape of type $(m,n)$, but we don't really need to consider this since $F_w$ is equal to single skew Schur function \cite{billey.stanley.1993} and the coefficients in the Schur expansion of a skew Schur function are just Littlewood-Richardson coefficients.  \\

To prove the lemma, it is enough to find a bi-infinite increasing $(m,n)$-periodic sequence $\alpha$ such that $D_\alpha\subset D_\mu[s]$ and $\alpha_{i+1}=\alpha_{i+2}=\cdots=\alpha_{i+m}$ for some $i$, and $|D_{\mu[s]/\alpha}|\leq (n-m)(m-1)/2$. If this happens, for $w\in \anm$ satisfying $A_w\cdot(\alpha/\alpha)=(\mu[s]/\alpha)$ then $w$ is $(\alpha_{i+m}-i-m)$-Grassmannian, which proves the lemma. There are many ways to define $\alpha$ satisfying the above-mentioned conditions. Let $\beta$ be $\mu[s]$. For any $a=1,\ldots,m$, we can define a $(m,n)$-periodic bi-infinite sequence $\alpha(a)$ to be $\alpha(a)_i=\beta_a$ for $a\leq i <a+m$. It is clear that $\alpha(a)_i= \alpha(a)_a=\beta_a<\beta_i$ for $a\leq i <a+m$ so that $\beta/\alpha$ is a cylindric shape. Moreover, $|D_{\beta/\alpha(a)}|$ is equal to 
$$\sum_{i=a}^{a+m-1} (\beta_i-\beta_a)=\sum_{i=1}^m \beta_i  - m\cdot \beta_a + (n-m)(a-1).$$
Therefore, the average of $|D_{\beta/\alpha(a)}|$ for $a=1,\ldots,m$ is 
$${\sum_{a=1}^m (n-m)(a-1)\over m}= {(n-m)(m-1)\over 2},$$
so there exists $a$ such that $$|D_{\beta/\alpha(a)}|\leq {(n-m)(m-1)\over 2}$$
and we are done.\qed 

\begin{example}
Let $n=6,m=3$ and $w=s_5s_3s_1s_4s_2s_0$. We write $w=531420$ for the sake of simplicity. One can take $v$ to be $510$ so that $wv$ is $0$-Grassmannian. Let $\lambda/r/\mu$ be $\phi(wv)$. Then one can easily check that $\lambda=(2,1), r=1$ and $\mu$ is empty partition. Since $v$ corresponds to a partition $(2,1)$ via the bijection between $P_{mn}$ and a partition contained in $(n-m)^m$, $w$ corresponds to a cylindric shape $(2,1)/1/(2,1)$. \\

In this setup, we apply Theorem \ref{thm.dualpieri} to $w'=ws_5$ and $m=1$ to obtain
$$F_w=\sum_{u \in B_+(w)}F_u - \sum_{u\in B_-(w)} F_u.$$
where $B_+(w)=\{541052,341052,354052\}$ and $B_-(w)=\{354105\}$. See Figure 1 for cylindric shapes corresponding to $w,B_{+}(w)$ and $B_{-}(w)$.\\

Note that except for $w$ and $341052$, other elements are missing $s_i$ for some $i$. For example, $541052$ does not use $s_3$ in a reduced word. Therefore, $F_{541052}$ is equal to skew Schur function $s_{(3,3,2)/(2)}$, which is immediate from Figure 1 and Corollary \ref{cor.skew}. Similarly, we have $F_{354052}=s_{(3,3,2)/(1,1)}$ and $F_{354105}=
s_{(3,3,2,1)/(3)}=s_{(3,2,1)}$. Although in general we don't this step in the algorithm, it clearly reduces computational complexity.

To sum up, we have
\begin{align*}
F_w&=F_{541052}+F_{341052}+F_{354052}-F_{354105}\\
&=F_{341052}+s_{(3,3,2)/(1,1)}+s_{(3,3,2)/(2)}-s_{(3,2,1)}.
\end{align*}
Now we can repeat the procedure to $F_{341052}$. We know that $(341052)(10)$ is $0$-Grassmannian and $\lambda(s_1s_0)<\lambda(s_5s_1s_0)=\lambda(v)$ where the order $<$ is defined in Section 5. By applying Theorem \ref{thm.dualpieri} to $34105210$ and $m=2$, we obtain
$$F_{341052}=F_{345210}+F_{405210}$$
and both $345210$ and $405210$ are $0$-Grassmannian. Therefore, we have

\begin{align*}
F_w&=F_{345210}+F_{405210}+s_{(3,3,2)/(1,1)}+s_{(3,3,2)/(2)}-s_{(3,2,1)}\\
&=F_{345210}+F_{405210}+s_{(2,2,2)}+s_{(3,3)}+s_{(3,2,1)}\\
&=F_{345210}+F_{405210}+F_{540510}+F_{105210}+F_{405210}\\
&=F_{345210}+2F_{405210}+F_{540510}+F_{105210}.
\end{align*}

\begin{figure}
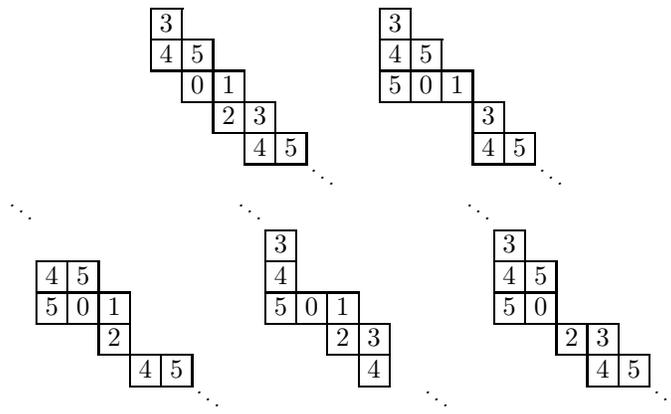

\caption{Cylindric shapes for $w,354105,541052,341052, 354052$.}
\begin{center}
\ytableausetup{mathmode,boxsize=4mm}
\begin{ytableau}
\none[\scalebox{0.8}{$\ddots$}]\\
\none&3\\
\none&4&5\\
\none&\none&0&1\\
\none&\none&\none&2&3\\
\none&\none&\none&\none&4&5\\
\none&\none&\none&\none&\none&\none&\none[\scalebox{0.8}{$\ddots$}]
\end{ytableau}
 \begin{ytableau}
\none[\scalebox{0.8}{$\ddots$}]\\
\none&3\\
\none&4&5\\
\none&5&0&1\\
\none&\none&\none&\none&3\\
\none&\none&\none&\none&4&5\\
\none&\none&\none&\none&\none&\none&\none[\scalebox{0.8}{$\ddots$}]
\end{ytableau}

 \begin{ytableau}
 \none[\scalebox{0.8}{$\ddots$}]\\
\none\\
\none&4&5\\
\none&5&0&1\\
\none&\none&\none&2\\
\none&\none&\none&\none&4&5\\
\none&\none&\none&\none&\none&\none&\none[\scalebox{0.8}{$\ddots$}]
\end{ytableau}
\begin{ytableau}
\none[\scalebox{0.8}{$\ddots$}]\\
\none&3\\
\none&4\\
\none&5&0&1\\
\none&\none&\none&2&3\\
\none&\none&\none&\none&4\\
\none&\none&\none&\none&\none&\none&\none[\scalebox{0.8}{$\ddots$}]
\end{ytableau}
\begin{ytableau}
\none[\scalebox{0.8}{$\ddots$}]\\
\none&3\\
\none&4&5\\
\none&5&0\\
\none&\none&\none&2&3\\
\none&\none&\none&\none&4&5\\
\none&\none&\none&\none&\none&\none&\none[\scalebox{0.8}{$\ddots$}]
\end{ytableau}\end{center}
\end{figure}
\end{example}

\section{Concluding remark}

It would be amazing to find a manifestly positive formula for the expansion of the cylindric skew Schur functions in terms of cylindric Schur functions, or the expansion of the affine Stanley symmetric functions in terms of affine Schur functions. Although the latter problem would be hard at this time since it contains famous problems such as 3-point Gromov-Witten invariants of the flag variety, the formal problem clearly have more combinatorics so that one may hope that some of classical combinatorics on skew shapes, such as jeu de taquin, can be generalized. However, naive generalization of jeu de taquin has a problem. See Yoo's thesis \cite{Yoo.2011}. It would be interesting to develop combinatorics related to cylindric shapes to attack the problem. \\


It would be also interesting to define equivariant version of cylindric skew Schur functions. Although double affine Stanley symmetric functions are defined by Lam and Shimozono \cite{LS.2013}, their definition does not seem to contain much combinatorics. For example, there is no known tableau definition of double affine Stanley symmetric function when $w$ is 321-avoiding, whereas there is a tableau definition for the double skew Schur functions \cite{Molev.2009}.

\bibliographystyle{alpha}
\bibliography{paper}
\end{document}